\definecolor{darkred}{rgb}{0.5,0,0}
\definecolor{darkgreen}{rgb}{0,0.5,0}
\definecolor{darkblue}{rgb}{0,0,0.5}
\theoremstyle{plain}
\newtheorem{theorem}{Theorem}[section]
\newtheorem{lemma}[theorem]{Lemma}
\theoremstyle{remark}
\newtheorem{remark}[theorem]{Remark}
\newcommand\A{\mathcal{A}}
\newcommand\M{\mathcal{M}}
\renewcommand\M{\mathcal{M}}
\newcommand{\K}{\mathcal{K}}
\newcommand{\J}{\mathcal{J}}
\newcommand{\XX}{\mathcal{X}}
\newcommand{\YY}{\mathcal{Y}}
\newcommand{\ZZ}{\mathcal{Z}}
\newcommand{\R}{\mathbb{R}}
\renewcommand{\H}{\mathbb{H}}
\newcommand{\C}{\mathbb{C}}
\newcommand{\cC}{\mathcal{C}}
\newcommand{\Z}{\mathbb{Z}}
\newcommand{\Q}{\mathbb{Q}}
\renewcommand{\P}{\mathbb{P}}
\newcommand{\bA}{\mathbb{A}}
\newcommand\lie[1]{\mathfrak{#1}}
\renewcommand{\k}{\lie{k}}
\newcommand{\g}{\lie{g}}
\newcommand{\p}{\lie{p}}
\newcommand{\z}{\lie{z}}
\newcommand{\on}{\operatorname}
\newcommand{\Jac}{\on{Jac}}
\newcommand{\rep}{\on{rep}}
\newcommand{\Bl}{\on{Bl}}
\newcommand{\pre}{\on{pre}}
\newcommand{\tw}{\on{tw}}
\newcommand{\Ups}{\Upsilon}
\newcommand{\Quot}{\on{Quot}}
\newcommand{\quot}{\on{quot}}
\newcommand{\Coh}{\on{Coh}}
\newcommand{\dual}{\vee}
\newcommand{\Ve}{\on{Vert}}
\newcommand{\univ}{\on{univ}}
\newcommand{\Proj}{\on{Proj}}
\newcommand{\Aut}{ \on{Aut} } 
\newcommand{\Bun}{ \on{Bun} } 
\newcommand{\Ob}{ \on{Ob} }
\newcommand{\Ad}{ \on{Ad} }
\newcommand{\Hom}{ \on{Hom}}
\newcommand{\Ext}{ \on{Ext}}
\renewcommand{\ker}{ \on{ker}}
\newcommand{\coker}{ \on{coker}}
\newcommand{\UU}{ \mathfrak{U}}
\newcommand{\CC}{ \mathfrak{C}}
\newcommand{\Vol}{\omega}
\newcommand{\Spec}{\on{Spec}}
\newcommand\dirac{/\kern-1.2ex\partial} 
\newcommand\qu{/\kern-.7ex/} 
\newcommand\lqu{\backslash \kern-.7ex \backslash} 
\newcommand\dr{r_+ \kern-.7ex - \kern-.7ex r_-}
\newtheorem{corollary}[theorem]{Corollary}
\newtheorem{definition}[theorem]{Definition}
\newtheorem{proposition}[theorem]{Proposition}
\newtheorem{example}[theorem]{Example}
\newcommand{\labell}\label
\renewcommand{\d}{{\on{d}}}
\newcommand{\ovl}{\overline}
\newcommand\Phinv{\Phi^{-1}}
\newcommand{\lan}{\langle}
\newcommand{\ran}{\rangle}
\newcommand{\ti}{\tilde}
\newcommand\pt{\on{pt}}
\newcommand\rk{\on{rk}}
\newcommand{\sss}{\on{ss}}
\newcommand\MM{\mathfrak{M}}
\newcommand\eE{\mathcal{E}}
\newcommand\Gr{\on{Gr}}
\newcommand\Sym{\on{Sym}}
\newcommand\ev{\on{ev}}
\newcommand\Vect{\on{Vect}}
\newcommand\ul{\underline}
\newcommand\mO{\mathcal{O}}
\newcommand\G{\mathcal{G}}
\renewcommand\H{\mathcal{H}}
\newcommand\bra[1]{ < \kern-.7ex {#1} \kern-.7ex >} 
\newcommand\bdefn{\begin{definition}}
\newcommand\edefn{\end{definition}}
\newcommand\bea{\begin{eqnarray*}}
\newcommand\eea{\end{eqnarray*}}
\newcommand\bcv{\left[ \begin{array}{r} }
\newcommand\ecv{\end{array} \right] }
\newcommand\bma{\left[ \begin{array} }
\newcommand\ema{\end{array} \right]}
\newcommand\ben{\begin{enumerate}}
\newcommand\een{\end{enumerate}}
\newcommand\beq{\begin{equation}}
\newcommand\eeq{\end{equation}}
\newcommand\bex{\begin{example}}
\newcommand\bsj{\left\{ \begin{array}{rrr} }
\newcommand\esj{\end{array} \right\}}
\newcommand\Cone{\on{Cone}}
\newcommand\cI{\mathcal{I}}
\newcommand\eex{\end{example}}
\newcommand\sx{*\kern-.5ex_X}
\newcommand{\fr}{{\on{fr}}}
\newcommand{\Id}{{\on{Id}}}
\def\mathunderaccent#1{\let\theaccent#1\mathpalette\putaccentunder}
\def\putaccentunder#1#2{\oalign{$#1#2$\crcr\hidewidth \vbox
to.2ex{\hbox{$#1\theaccent{}$}\vss}\hidewidth}}
\begin{document}

\title[Quantum Kirwan morphism II]{Quantum Kirwan morphism and Gromov-Witten invariants of
  quotients II}

\authors{Chris T. Woodward\thanks{Partially supported by NSF
 grant DMS0904358 and the Simons Center for Geometry and Physics}
\address Department of Mathematics \\ 
Rutgers University \\ 110 Frelinghuysen Road \\ Piscataway, NJ 08854-8019,
U.S.A. 
  \email ctw@math.rutgers.edu
}


\maketitle

\begin{abstract}  
This is the second in a sequence of papers in which we construct a
quantum version of the Kirwan map from the equivariant quantum
cohomology $QH_G(X)$ of a smooth polarized complex projective variety
$X$ with the action of a connected complex reductive group $G$ to the
orbifold quantum cohomology $QH(X \qu G)$ of its geometric invariant
theory quotient $X \qu G$, and prove that it intertwines the genus
zero gauged Gromov-Witten potential of $X$ with the genus zero
Gromov-Witten graph potential of $X \qu G$.  In this part we construct
virtual fundamental classes on the moduli spaces used in the
construction of the quantum Kirwan map and the gauged Gromov-Witten
potential.
\end{abstract} 


\tableofcontents

\setcounter{section}{3}
\setcounter{equation}{20}
\setcounter{figure}{12}

We continue with the notation in the first part \cite{qk1} where we
introduced moduli spaces of vortices from the symplectic viewpoint.
In order to obtain virtual fundamental classes for the moduli spaces
of vortices, we show that the moduli spaces of vortices are
homeomorphic to coarse moduli spaces of algebraic stacks equipped with
(in good cases) perfect obstruction theories.

\section{Stacks of curves and maps}
\label{stacksec}

 This section contains mostly algebraic preliminaries.  In particular,
 we introduce hom-stacks of morphisms of stacks for which there is
 unfortunately no general theory yet.  However, by a result of
 Lieblich \cite[2.3.4]{lieblich:rem}, the stack of morphisms to a
 quotient stack by a reductive group is an Artin stack, and this is
 enough for our purposes.  We also must show that the substacks of
 curves satisfying Mundet's semistability condition are algebraic, and
 this requires recalling some results of Schmitt \cite{schmitt:git}
 who gave a geometric invariant theory construction of a related
 moduli space.

By our convention a {\em stack} means a locally Noetherian stack over
the fppf site of schemes, following de Jong et al \cite{dejong:stacks}
which we take as our standard reference.  (Another standard reference
on stacks is Laumon-Moret-Bailly \cite{la:ch}, with a correction by
Olsson \cite{ol:qcoh}.)  By abuse of terminology we say that a stack
is {\em a scheme resp. algebraic space} if it is the stack associated
to a scheme resp. algebraic space.  A morphism of stacks $f: \XX \to
\YY$ is {\em representable} if for any morphism $g : S \to \YY$ where
$S$ is a scheme, the fiber product $S \times_\YY \XX$ is an algebraic
space.  An {\em Artin stack} resp. {\em Deligne-Mumford stack} $\XX$
over a scheme $S$ is a stack for which the diagonal $\XX \to \XX
\times_S \XX$ is representable, quasi-compact, and separated, and such
that there exists an algebraic space $X/S$ and a smooth resp. \'etale
surjective morphism $X \to \XX$.  In characteristic zero (here the
base field is always the complex numbers) an Artin stack is a
Deligne-Mumford stack iff all the automorphism groups are finite, see
e.g. \cite[Remark 2.1]{ed:notes}.  A {\em gerbe} is a locally
non-empty, locally connected (between any two objects exists a
morphism) Artin stack.  The category of Artin resp. Deligne-Mumford
stacks is closed under disjoint unions and (category-theoretic) fiber
products \cite[4.5]{la:ch}.  A morphism of stacks is {\em proper} if
it is separated, of finite type, and universally closed.

\subsection{Quotient stacks}   The following are examples of stacks:

\begin{example} \label{stacksex}
\begin{enumerate} 
\item (Quot schemes) For integers $r,n$ with $0 < r < n $ let
  $\Gr(r,n)$ denote the Grassmannian of subspaces of $\C^n$ of
  dimension $r$.  Any morphism from $X$ to $\Gr(r,n)$ gives rise to a
  vector bundle $E \to X$ obtained by pull-back of the quotient bundle
  and a surjective morphism $\phi$ from the trivial bundle $X \times
  \C^n$ to $E$.  Grothendieck \cite{ega4}, and later Olsson-Starr
  \cite{olst:quot}, studied such pairs in a very general setting, as
  part of a general program to construct moduli schemes for various
  functors. Given a scheme $\XX/S$ resp.  separated Deligne-Mumford
  stack $\XX/S$ and a quasicoherent $\mO_{\XX}$-module $F$, let
  $\Quot_{F/\XX/S}$ be the category that assigns to any $S$-scheme $T$
  the set of pairs $(E,\phi)$ where $\phi: F \times_S T \to E$ is a
  flat family of quotients.  By Grothendieck \cite{ega4} resp.
  Olsson-Starr \cite{olst:quot}, if $\XX$ is a scheme projective over
  $S$ resp. Deligne-Mumford stack then $\Quot_{F/\XX/S}$ is a smooth
  scheme resp. algebraic space, whose connected components are
  projective resp. quasiprojective if $\XX$ is. \label{quotstack1}

\item (Stack of coherent sheaves) The category of coherent sheaves on
  a projective scheme carries the structure of an Artin stack.  If $X$
  is an $S$-scheme we denote by $\Coh(X/S)$ resp. $\Vect(X/S)$ the
  category that assigns to any $T \to S$ the category of coherent
  sheaves resp. vector bundles on $X \times_S T$.  By
  \cite[p. 29]{la:ch}, see also \cite[Theorem 2.1.1]{lieblich:rem}, if
  $X$ is a projective scheme then $\Coh(X)$ is an Artin stack and
  $\Vect(X)$ an open substack.  Charts can be constructed as follows:
  after suitable twisting any sheaf $E \to X$ can be generated by its
  global sections, in which case $E$ can be written as a quotient $F
  \to E$ where $F = X \times H^0(E)^\dual$.  Then $\Coh(X)$ is
  isomorphic locally near $E$ to the quotient of $\Quot_{F/X/S}$ by
  $\Aut(F)$.
  %
\item (Stack of bundles, first version) \label{bundlesstack1} Let $G$
  be a reductive group and $X$ an $S$-scheme.  We denote by
  $\Bun_G(X)$ the category that assigns to any $T \to S$ the category
  of principal $G$-bundles on $X \times_S T$.  Then for any integer $r
  > 0$ the stack $\Bun_{GL(r)}(X)$ is canonically isomorphic to the
  substack $\Vect_r(X)$ of $\Vect(X)$ of vector bundles of rank $r$.
  Any principal $G$-bundle corresponds to a $GL(V)$-bundle $E \to X
  \times_S T$ together with a reduction of structure group $ X
  \times_S T \to E/ G$.  If $X$ is projective, then $\Vect(X)$ is an
  Artin stack, being an open substack of the stack $\Coh(X)$, and the
  above description gives that $\Bun_G(X)$ is an Artin stack as in
  Sorger \cite[3.6.6 Corollary]{sorg:lec}.
\item (Quotient stacks)
\label{quotientstack} If $G$ is a reductive group scheme over $S$
  then we denote by $BG$ the stack which assigns to an algebraic space
  $T \to S$ the category of principal $G$-bundles (torsors) over $T$.
  More generally if $X$ is a $G$-scheme over $S$ then $X/G$ denotes
  the {\em quotient stack} which assigns to any morphism $T \to S$ the
  category of principal $G$-bundles $P \to T$ together with sections
  $u: T \to (P \times_S X)/G$, or equivalently equivariant morphisms
  from $P$ to $X$, see Laumon-Moret-Bailly \cite[4.6.1]{la:ch}.  In
  particular, $BG = \pt/G$.
\end{enumerate} 
\end{example} 

\subsection{Stacks of curves} 

\begin{example} \label{stacksofschemes} 
\begin{enumerate} 
\item (Stable curves) \label{stablecurves} By a {\em nodal curve} over
  the scheme $S$ we mean a flat proper morphism $\pi : C \to S$ of
  schemes such that the geometric fibers of $\pi$ are reduced,
  one-dimensional and have at most ordinary double points (nodes) as
  singularities; that is, a nodal curve without the connectedness
  assumption in \cite[Definition 2.1]{bm:gw}. A nodal curve is {\em
    stable} if each fiber has no infinitesimal automorphisms.  The
  category of connected nodal resp. stable marked curves of genus $g$
  is then a (non-finite-type) Artin resp. proper Deligne-Mumford stack
  $\ovl{\MM}_{g,n}$ resp. $\ovl{\M}_{g,n}$ \cite{dm:irr}, \cite{bm:gw}.
  Let ${\MM}_{g,n,\Gamma}$ denote the stack consisting of objects
  whose combinatorial type in Definition \ref{stacksofschemes} is
  $\Gamma$.  If $\Gamma$ is connected then $\MM_{g,n,\Gamma}$ is a
  locally closed, local complete intersection Artin substack of
  $\ovl{\MM}_{g,n}$ \cite{bm:gw}.

There is a canonical morphism $\ovl{\MM}_{g,n,\Gamma} \to
\ovl{\M}_{g,n,\Gamma}$ which collapses unstable components, and can be
defined as follows.  Let $\pi: C \to S$ be an $n$-marked family of
nodal curves.  Let $\omega_{C/S}$ denote the relative dualizing sheaf
over $S$ and $\omega_{C/S}[z_1+\ldots + z_n]$ its twisting by $z_1 +
\ldots + z_n$.  Consider the curve
$$C^{st} = \Proj \oplus_{n \ge 0} \pi_*((\omega_{C/S}[z_1 + \ldots +
  z_n])^{\otimes n}) .$$
As noted in \cite[Section 3]{bm:gw}, in the case that a family of
$n$-marked curves arises from forgetting a marking of a family of
$(n+1)$-marked curves, each fiber $C^{st}_s$ is obtained from $C_s$ by
collapsing unstable components.  Furthermore, the formation of
$C^{st}$ commutes with base change and the map $C^{st} \to S$ is
projective and flat.  The general case is reduced to this one by
adding markings locally.  
%
Let $\Ups: \Gamma \to \Gamma'$ be a morphism of modular graphs.
Then there are morphisms of Artin resp. Deligne-Mumford stacks $
\ovl{\MM}(\Ups): \ovl{\MM}_{g,n,\Gamma} \to \ovl{\MM}_{g,n,\Gamma'}$
resp.  $ \ovl{\M}(\Ups): \ovl{\M}_{g,n,\Gamma} \to
\ovl{\M}_{g,n,\Gamma'}$.  In the case of forgetting a tail, the
morphism $\ovl{\M}(\Ups)$ can be defined by the composition of the
inclusion $\ovl{\M}_{g,n,\Gamma} \to \ovl{\MM}_{g,n,\Gamma}$, the map
$\ovl{\MM}(\Ups): \ovl{\MM}_{g,n,\Gamma} \to \ovl{\MM}_{g,n,\Gamma'}$
and the collapsing map $\ovl{\MM}_{g,n,\Gamma'} \to
\ovl{\M}_{g,n,\Gamma'}$.
We denote by $\ovl{\CC}_{g,n,\Gamma} \to \ovl{\MM}_{g,n,\Gamma}$
resp. $\ovl{\cC}_{g,n,\Gamma} \to \ovl{\M}_{g,n,\Gamma}$ the {\em
  universal curve} over $\MM_{g,n,\Gamma}$
resp. $\ovl{\M}_{g,n,\Gamma}$ namely the category of $n$-marked nodal
(resp. stable) curves equipped with an additional $(n+1)$-marking which
need not be distinct from the first $n$.  In the case of
$\ovl{\M}_{g,n}$, the forgetful morphism $f_{n+1}$ lifts to a map
$\ovl{\cC}_{g,n+1} \to \ovl{\cC}_{g,n}$ and the section provided by the
$(n+1)$-st marking $\ovl{\M}_{g,n+1} \to \ovl{\cC}_{g,n+1}$ combine to an
isomorphism $\ovl{\M}_{g,n+1} \to \ovl{\cC}_{g,n}$.  In other words,
$\ovl{\M}_{g,n+1}$ can be considered the universal curve for
$\ovl{\M}_{g,n}$.

\item (Stable parametrized curves) Recall from Section 2.2 that if $C$
  is a smooth connected projective curve then a {\em $C$-parametrized
    curve} is a map $u: \hat{C} \to C$ of homology class $[C]$ from a
  nodal curve $\hat{C}$ to $C$, and is stable if it has only finitely
  many automorphisms.  The category of nodal resp. stable
  $C$-parametrized curves forms an Artin stack $\ovl{\MM}_n(C)$
  resp. $\ovl{\M}_n(C)$. More generally, for any rooted tree $\Gamma$
  we have Artin resp. Deligne-Mumford stacks $\ovl{\MM}_{n,\Gamma}(C)$
  resp. $\ovl{\M}_{n,\Gamma}(C)$; the latter is a special case of the
  {\em Fulton-MacPherson compactification} studied in
  \cite{fm:compact}.  There is a morphism $\ovl{\MM}_{n,\Gamma}(C) \to
  \ovl{\M}_{n,\Gamma}(C)$ which collapses the unstable components.
  Indeed let $\pi: \hat{C} \to S, u : \hat{C} \to C$ be a family of
  $C$-parametrized $n$-marked nodal curves.  Let $L_C$ be an ample
  line bundle on $C$, and $\omega_{\hat{C}/S}$ denote the relative
  dualizing sheaf over $S$ and $\omega_{\hat{C}/S}[z_1+\ldots + z_n]$
  its twisting by $z_1 + \ldots + z_n$.  Consider the curve
\begin{equation} \label{proj1}
\hat{C}^{st} = \Proj \oplus_{n \ge 0} \pi_*((\omega_{\hat{C}/S}[z_1 +
  \ldots + z_n] \otimes u^* L_C^{\otimes 3})^{\otimes
  n}). \end{equation}
For families arising by forgetting markings, $\hat{C}^{st}$ is
obtained from $\hat{C}$ by collapsing unstable components and the
formation of $\hat{C}^{st}$ commutes with base change.  The general
case is reduced to this one by adding markings locally \cite[Section
  3]{bm:gw}.

Any morphism of rooted trees $\Ups: \Gamma \to \Gamma'$ of
the type collapsing an edge, cutting an edge, forgetting a tail
induces a morphism 
$\ovl{\MM}(\Ups) : \ovl{\MM}_{n,\Gamma}(C) \to \ovl{\MM}_{n,\Gamma'}(C)$
resp. 
$\ovl{\M}(\Ups): \ovl{\M}_{n,\Gamma}(C) \to
\ovl{\M}_{n,\Gamma'}(C) .$
In the case of forgetting a tail, the morphism $\ovl{\M}(\Ups)$ can
be defined by the composition of the inclusion $\ovl{\M}_{n,\Gamma}(C)
\to \ovl{\MM}_{n,\Gamma}(C)$, the map $\ovl{\MM}(\Ups):
\ovl{\MM}_{n,\Gamma}(C) \to \ovl{\MM}_{n,\Gamma'}(C)$ followed by the
collapsing map $\ovl{\MM}_{n,\Gamma'}(C) \to \ovl{\M}_{n,\Gamma'}(C)$.
\item (Curves with scalings) \label{curveswithscalings} Let $S$ be an
  algebraic space over $\C$ and $C$ a smooth projective nodal curve
  over $S$.  Recall from e.g. \cite[p.95]{ar:alg2} that the dualizing
  sheaf $\omega_{C/S}$ is locally free.  Factor the projection $\pi: C
  \to S$ locally into the composition of a regular embedding $i: C \to
  R$ of relative dimension $m$ and a smooth morphism $j:R \to S$ of
  relative dimension $l$.  The normal sheaf $N_{C/R}$ of $i$ is
  locally free of rank $m$, while the sheaf of relative K\"ahler
  differentials $\Omega^1_{R/S}$ is locally free of rank $l$.  The
  relative dualizing sheaf of $\pi$ is
$ \omega_{C/S} := (\Lambda^m N_{C/R} )^{-1} \otimes \Lambda^l
\Omega^1_{R/S} .$
Explicitly, if $C$ is a nodal curve over a point and $\ti{C}$ denotes
the normalization of $C$ (the disjoint union of the irreducible
components of $C$) with nodal points $\{ \{w_1^+,w_1^- \}, \ldots, \{
w_k^+, w_k^- \} \}$ then $\omega_C$ is the sheaf of sections of
$\omega_{\ti{C}} := T^\dual \ti{C}$ whose residues at the points $
w_j^+, w_j^-$ sum to zero, for $j = 1,\ldots,k$.  Denote by
$\P(\omega_{C/S} \oplus \C)$ the fiber bundle obtained by adding in a
section at infinity.  A {\em scaling} of $C$ is a section $\lambda$ of
$\P(\omega_{C/S} \oplus \C)$.  The category of pairs $(C,\lambda)$ is
an Artin stack, with charts given by the forgetful morphisms from
stable curves with additional marked points, equipped with scalings.
\item (Stable scaled affine lines) Let $S$ be an algebraic space over
  $\C$.  A {\em nodal $n$-marked scaled affine line}, see
  \cite[Section 2.3]{qk1}, consists of a smooth connected projective
  nodal curve $C$ over $S$, an $(n+1)$-tuple $(z_0,\ldots,z_n)$ of
  sections $S \to C$ (the markings) distinct from the nodes and each
  other, and a scaling $\lambda$ of $\P( \omega_{C/S} \oplus \C)$,
  satisfying the following conditions:
\begin{enumerate}
\item (Affine structure on each component on which it is
  non-degenerate) on each irreducible component $C_i$ of $C$ the form
  $\lambda$ is either zero, infinite, or finite except for a single
  order two pole at a node of $C$.
\item (Monotonicity) on each non-self-crossing path of components from
  $z_i, i > 0$ to $z_0$, there is exactly one component on which
  $\lambda$ is finite and non-zero; on the components before
  resp. after, $\lambda$ vanishes resp. is infinite.
\end{enumerate} 
The first condition means that on the complement of the pole, if it
exists, there is a canonical affine structure.  An $n$-marked scaled
curve is {\em stable} if it has no infinitesimal automorphisms, or
equivalently, each component with degenerate resp. non-degenerate
scaling has at least three resp. two special points.  We denote by
$\ovl{\M}_{n,1}(\bA)$ resp. $\ovl{\MM}_{n,1}(\bA)$ the stack of stable
resp.  nodal connected affine scaled $n$-marked curves; this is a
proper complex variety resp. Artin stack.  The former was constructed
in \cite{mau:mult}.  Charts for the latter are given by forgetful
morphisms $\ovl{\M}_{m,1}(\bA) \to \ovl{\MM}_{n,1}(\bA)$ for $m > n$
defined by forgetting the last $m - n$ points, as in the case without
scaling in Behrend-Manin \cite{bm:gw}.

We also wish to allow {\em twistings} at nodes of $C$ with infinite
scaling, see \cite[Section 2]{ol:logtwist} for a precise definition:
the node has a cyclic automorphism group $\mu_r$ and there exist
charts for neighborhoods of the node in each component of the form $U
/ \mu_r$ acting by inverse roots of unity.  As in \cite[Theorem
  1.8]{ol:logtwist}, the category $\ovl{\MM}_{n,1}^{\tw}(\bA)$ of scaled
twisted marked curves is equivalent to the category of scaled log
twisted marked curves, compatibly with base change, and this implies
that $\ovl{\MM}_{n,1}^{\tw}(\bA)$ is an Artin stack.  For any colored
tree $\Gamma$ we denote by $\ovl{\MM}_{n,1,\Gamma}^{\tw}(\bA)$
resp. $\ovl{\M}^{\tw}_{n,1,\Gamma}(\bA)$ the stack of nodal resp. stable
scaled $n$-marked affine lines of combinatorial type $\Gamma$.

There is a canonical morphism $\ovl{\MM}_{n,1}(\bA) \to
\ovl{\M}_{n,1}(\bA)$ defined as follows.  Let $(C,\lambda,\ul{z})$ be
a family of scaled affine lines over an algebraic space $S$.  Let
$\Lambda$ denote the sheaf over $C$ that assigns to an open subset $U
\subset C$ the space of (possibly infinite) sections of $T^\dual U$
given by $f \lambda$ where $f \in \mO_C(U)$ is regular on $U$.  Thus
$\Lambda$ is rank one on the components where $\lambda \notin \{ 0,
\infty\} $, and is rank zero otherwise.  Denote the sum
$$\omega_{C/S}^\lambda[z_1 + \ldots + z_n] = \omega_{C/S}[z_1 + \ldots
  + z_n] + \Lambda.$$
In terms of the normalization $\ti{C}_s$ of any fiber $C_s$,
$\omega_{C/S}^\lambda[z_1 + \ldots + z_n]$ is the sheaf of relative
differentials with poles at the markings, nodes, and an additional
pole on any component with finite scaling at the node connecting with
a component with infinite scaling.  Consider the curve
\begin{equation} \label{Cst}
 C^{st} = \Proj \oplus_{n \ge 0} \pi_*((\omega_{C/S}^\lambda[z_1 +
  \ldots + z_n])^{\otimes n}).\end{equation}
In the case that $C$ arises from a family obtained by forgetting a
marked point on a stable scaled affine curve, $C^{st}$ collapses
unstable components and its formation commutes with base change.  This
construction collapses the bubbles that are unstable {\em furthest
  away from the root marking}, in particular, any colored component
that becomes unstable after forgetting the marking.  However, the
adjacent component may be destabilized by collapse of this component;
it is then necessary to apply the construction again to collapse this
component.  The forgetful morphism is produced by applying the $\Proj$
construction {\em twice}, in contrast to the case of stable curves
where a single application suffices.  The general case is reduced to
this one by adding markings locally.

Any morphism of colored trees $\Ups: \Gamma \to \Gamma'$ of the
type {\em collapsing an edge}, {\em collapsing edges with relations},
{\em cutting an edge}, {\em cutting an edge with relations} or {\em
  forgetting a tail} induces morphisms $\ovl{\MM}(\Ups):
\ovl{\MM}_{n,1,\Gamma}(\bA) \to \ovl{\MM}_{n,1,\Gamma'}(\bA)$ and
$\ovl{\M}(\Ups): \ovl{\M}_{n,1,\Gamma}(\bA) \to
\ovl{\M}_{n,1,\Gamma'}(\bA)$.  In the case of forgetting a tail, the
morphism $\ovl{\M}(\Ups)$ can be defined by the composition of the
inclusion $\ovl{\M}_{n,1,\Gamma}(\bA) \to \ovl{\MM}_{n,1,\Gamma}(\bA)$,
the map $\ovl{\MM}(\Ups): \ovl{\MM}_{n,1,\Gamma}(\bA) \to
\ovl{\MM}_{n,1,\Gamma'}(\bA)$ followed by the collapsing map
$\ovl{\MM}_{n,1,\Gamma'}(\bA) \to \ovl{\M}_{n,1,\Gamma'}(\bA)$.

By its construction, the stack $\ovl{\M}_{n,1}(\bA)$ has a {\em
  universal curve} $\ovl{\cC}_{n,1}(\bA) \to \ovl{\M}_{n,1}(\bA)$
equipped with universal scaling and markings.  The forgetful morphism
$\ovl{\M}_{n+1,1}(\bA) \to \ovl{\M}_{n,1}(\bA)$ is isomorphic to the
universal curve, as in the Knudsen case, given by the map
$\ovl{\M}_{n+1,1}(\bA) \to \ovl{\cC}_{n,1}(\bA)$.  The latter is defined
by the $n$-marked curve \eqref{Cst} with section the $(n+1)$-st marked
point.  The inverse $\ovl{\cC}_{n,1}(\bA) \to \ovl{\M}_{n+1,1}(\bA)$ is
defined by a consideration of various cases: If the extra marked point
is a smooth point on a component with infinite scaling, distinct from
the other markings, then one adds a bubble with finite scaling with
the additional marking to the curve.  If the extra marked point is a
smooth point on a component with finite or zero scaling, distinct from
the other marking, then one adds that point as an additional marking.
If the extra marked point coincides with one of the other markings, or
with a node, the one adds an additional bubble component with the
appropriates scaling, and puts the additional marking on that
component.  This shows that the morphism $\ovl{\M}_{n+1,1}(\bA) \to
\ovl{\cC}_n(\bA)$ induces a bijection of geometric points and is
therefore (as a morphism of nodal curves over $\ovl{\M}_{n,1}(\bA)$) an
isomorphism.

More generally, one may consider stacks $\ovl{\MM}_{n,s}(\bA)$ of
$s$-scaled $n$-marked affine lines, that is, curves equipped with
markings $z_1,\ldots,z_n$ and scalings $\lambda_1,\ldots,\lambda_s$.
By similar arguments, these stacks are Artin and the stacks of stable
curves $\ovl{\M}_{n,s}(\bA)$ are Deligne-Mumford.

\item (Stacks of scaled curves) A {\em family of nodal
  $C$-parametrized curves with finite scaling} consists of $\pi:
  \hat{C} \to S$ a family of nodal curves, $u: \hat{C} \to C$ a family
  of nodal maps of homology class $[C]$, a family of sections
  $z_1,\ldots, z_n: S \to \hat{C}$, and a section $ \lambda : \hat{C}
  \to \P( \omega_{\hat{C}/C} \oplus \C)$ of the projectivized relative
  dualizing sheaf (see \ref{stacksofschemes}
  \eqref{curveswithscalings}) satisfying the following conditions:
\begin{enumerate} 
\item {\rm (Finite on any marking)}  $\lambda(z_i)$ is finite;
\item {\rm (Scaling on each bubble component)} on each component $C_i$
  of $\hat{C}$ mapping to a point in $C$ such that $\lambda | C_i$ is
  finite and non-zero, the restriction $\lambda | C_i$ has a unique
  pole of order $2$, at the node connecting $C_i$ with the principal
  component $C_0$; and
\item {\rm (Monotonicity)} on each non-self-crossing path of
  components from the principal component to the component containing
  $z_i, i > 0$, there is exactly one component on which $\lambda$ is
  finite and non-zero; on the components before resp. after, $\lambda$
  vanishes resp. is infinite.
\end{enumerate}
The category of such forms an Artin stack $\MM_{\Gamma,n,1}(C)$.
There is a ``forgetful morphism'' from $\ovl{\MM}_{n,1}(C)$ to
$\ovl{\MM}_n(C)$ which forgets the scaling. There is also a morphism
$\ovl{\MM}_{n,1}(C) \to \ovl{\M}_{n,1}(C)$ collapsing the unstable
components, whose construction is a combination of the previous cases
and left to the reader.  Similarly, $\ovl{\MM}_{n,1}^{\tw}(C)$ is the
stack of scaled marked curves with log structures at the nodes with
infinite scaling as in \cite[Section 2]{ol:logtwist}.
\end{enumerate}
\end{example}

\subsection{Stacks of morphisms}

Many of our examples will arise as stacks of morphisms between stacks.
Fix an algebraic space $S$.  Let $\XX$ and $\YY$ be Artin stacks over
$S$.  Let $\Hom_S(\XX,\YY)$ be the fibered category over the category
of $S$-schemes, which to any $T \to S$ associates the groupoid of
functors $\XX \times_S T \to \YY \times_S T$. 
Unfortunately, there seems to be no general construction which
guarantees that $\Hom_S(\XX,\YY)$ is an Artin stack, but partial
results are given by Olsson \cite{olsson:homstacks}, Romagny
\cite{ro:gr}, and Lieblich \cite[2.3.4]{lieblich:rem}.

\begin{example} 
\label{homstacks}
\begin{enumerate} 
\item (Hom stacks between schemes) If $X,Y$ are projective schemes
  over a Noetherian scheme $S$ with $X$ flat over $S$ then
  $\Hom_S(X,Y)$ is representable by a quasiprojective $S$-scheme (a
  subscheme of the Hilbert scheme) by Grothendieck's construction of
  Hilbert schemes, described in \cite{fga}.
%
%
%
\item (Stable maps) \label{stablemaps} Let $\ovl{\MM}_{g,n}$ denote the
  stack of nodal curves with genus $g$ and $n$ markings from Example
  \ref{stacksofschemes}, $\ovl{\CC}_{g,n} \to \ovl{\MM}_{g,n}$ the
  universal curve, and $X$ a projective variety.  Then
  $\ovl{\MM}_{g,n}(X) := \Hom_{\ovl{\MM}_{g,n}}(\ovl{\CC}_{g,n},X)$ is
  the stack of {\em nodal (or prestable) maps to $X$}.  The locus
  $\ovl{\M}_{g,n}(X)$ of stable maps is defined as the sub-stack of
  maps with no infinitesimal automorphisms, or equivalently, such that
  each component on which the map is constant of genus zero
  (resp. one) has at least three resp. (one) special point.  By the
  constructions in Behrend-Manin \cite{bm:gw} and Fulton-Pandharipande
  \cite{fu:st}, $\ovl{\MM}_{g,n}(X)$ resp. $\ovl{\M}_{g,n}(X)$ is an
  Artin resp. proper Deligne-Mumford stack.  Similarly for any type
  $\Gamma$, let $\ovl{\MM}_{g,n,\Gamma}(X) =
  \Hom_{\ovl{\MM}_{g,n,\Gamma}}(\ovl{\CC}_{g,n,\Gamma},X)$ denote the
  compactified stack of maps of combinatorial type $\Gamma$ and
  $\ovl{\M}_{g,n,\Gamma}(X)$ the locus of stable maps.  Then
  $\ovl{\MM}_{g,n,\Gamma}(X)$ resp. $\ovl{\M}_{g,n,\Gamma}(X)$ is an
  Artin resp. proper Deligne-Mumford stack.
There is a canonical morphism from $\ovl{\MM}_{g,n,\Gamma}(X)$ to
$\ovl{\M}_{g,n,\Gamma}(X)$ which collapses unstable components.
Indeed, given a family $u: C \to X$ and an ample line bundle $L \to X$
consider the curve 
\begin{equation} \label{sigma}
 C^{st} = \Proj \bigoplus_{n \ge 0} \pi_* (\omega_{C/S}[z_1 + \ldots +
 z_n] \otimes u^*L^3)^{\otimes n}. \end{equation}
For families arising from forgetting markings from a family of stable
maps, $C^{st}$ is obtained from $C$ by collapsing unstable components,
and the formation of $C$ commutes with base change.  The general case
reduces to this one by adding markings locally \cite{bm:gw}.

Any morphism $\Ups: \Gamma \to \Gamma'$ of type cutting an edge,
collapsing an edge, or forgetting a tail induces morphisms of moduli
stacks
$$ \ovl{\MM}(\Ups,X): \ovl{\MM}_{g,n,\Gamma}(X) \to
\ovl{\MM}_{g,n,\Gamma'}(X), \quad \ovl{\M}(\Ups,X):
\ovl{\M}_{g,n,\Gamma}(X) \to \ovl{\M}_{g,n,\Gamma'}(X) .$$
In the first case the morphism is induced from fiber product with the
morphism $\ovl{\MM}(\Ups,X): \ovl{\MM}_{g,n,\Gamma} \to
\ovl{\MM}_{g,n,\Gamma'} $, while $\ovl{\M}(\Ups,X)$ is defined by
composing the inclusion $\ovl{\M}(\Ups,X) \to \ovl{\MM}(\Ups,X)$ with
$\ovl{\MM}(\Ups,X)$ and the collapsing morphism to
$\ovl{\M}_{g,n,\Gamma'}(X)$.

\item (Stacks of bundles, second version) \label{bundlesstack2}
Let $X$ be an $S$-scheme and
  $G$ a reductive group.  Let $\Hom(X,BG)$ be the category that
  assigns to $T \to S$ the groupoid of $G$-bundles on $X \times_S T$.
  Then $\Hom(X,BG)$ is a stack, naturally isomorphic to the stack
  $\Bun_G(X)$ of $G$-bundles.  In particular, if $X$ is a projective
  $S$-scheme then $\Hom(X,BG)$ is an Artin stack by Example
  \ref{stacksex}.
\item (Stacks of morphisms to quotient stacks) 
\label{homstoquotients} 
Let $X$ be an algebraic space over $S$, $G$ a reductive group and $Y$
a $G$-scheme, and $Y/G$ the quotient stack.  Let $\Hom_S(X,Y/G)$
denote the category that assigns to any $T \to S$, a principal
$G$-bundle $P$ over $X \times_S T$ and a section $ X \times_S T \to P
\times_G Y$.  By Lieblich \cite[2.3.4]{lieblich:rem}, $\Hom_S(X,Y/G)$
is an Artin stack.  More generally if $f: \XX \to \ZZ$ is a proper
morphism of Artin stacks and $Y$ is a separated and finitely-presented
$G$-scheme then let $\Hom_\ZZ(\XX,Y/G)$ be the fibered category that
associates to any morphism $T \to \ZZ$ and object $X$ of $\XX
\times_\ZZ T$ the category of pairs $(P,u)$ where $P \to X$ is a
principal $G$-bundle and section $u: X \to P \times_G Y$.  By Olsson
\cite[Lemma C.5]{aov:twisted}, $\Hom_\ZZ(\XX,Y/G)$ is an Artin stack.
\item (Stacks of morphisms to quotient stacks as quotients)
\label{bumps} In this
  example following Schmitt \cite{schmitt:git} we describe a
  realization of morphisms to quotients stacks as subschemes of
  Grothendieck's quot scheme discussed in Example \ref{stacksex}.
  First suppose that $C$ is a scheme over $S$, $G = GL(n)$ and $X =
  \P^{n-1}$.  Any morphism $u: C \to X/G$ corresponds to a vector
  bundle $E \to C$ together with a section of the projectivization
  $\P(E)$.  There are several equivalent descriptions of this data:
  (i) a vector bundle $E$ and a line sub-bundle $L := u^*
  \mO_{\P(E)}(1) \to C$, (ii) a vector bundle $E^\dual$, a line bundle
  $L^\dual$, and a surjective morphism $E^\dual \to L^\dual$.  The
  latter datum is termed a {\em swamp} ({short for {\em sheaf with
      map}}) or more generally, a {\em bump} (short for {\em bundle
    with map}) if the group $G$ is arbitrary reductive.  Schmitt
  \cite{schmitt:git} shows that the functor from schemes to sets which
  associates to any scheme the set of isomorphism classes of stable
  bumps, can be realized as a git quotient of a quot scheme.  The {\em
    type} of a bump is the pair of integers $(\deg(E),\deg(L))$.

If $S$ is an arbitrary scheme, then a bump over $C$ parametrized by
$S$ with representation $V$ consists of a principal $G$-bundle $P$ on
$S \times C$, a line bundle $L \to S \times C$, and a homomorphism
$\varphi$ from $P(V)$ to $L$.  Since $\Bun_{\C^\times}(C)$ splits
non-canonically as $ \Jac(C) \times B \C^\times $, this data is equivalent
to a morphism $S \to \Jac(C)$ together with a line bundle on the
parameter space $S$, which is the formulation adopted in Schmitt
\cite{schmitt:git}.  

The idea of Schmitt's construction of the moduli space of semistable
bumps is as follows.  After suitable twisting, we may assume that $E$
is generated by its global sections, in which case $E^\dual$ is a
quotient of a trivial vector bundle $F$ and we obtain a double
quotient $F \to E^\dual \to L^\dual$.  This gives a quotient $F^2 \to
E^\dual \times L^\dual$ with the property that the map to $L^\dual$
factors through $E^\dual$, and so a point in the quot scheme
$\Quot_{F^2/X/S}$.  Let $\MM^{G,\quot,\fr}(C,X,F)$ denote the
subscheme of $\Quot_{F^2/X/S}$ arising in this way, and let
$\MM^{G,\quot}(C,X,F) = \MM^{G,\quot,\fr}/\Aut(F)$ be the quotient by
the action of the general linear group $\Aut(F)$.  Let
$\ovl{\MM}^{G,\quot}(C,X,F)$ be its closure in
$\Quot_{F^2/X/S}/\Aut(F)$.  More generally, for any reductive group
$G$ and projective $G$-variety $Y$, a choice of representation $ G \to
GL(V)$ and embedding $Y \to \P(V)$ gives a stack
$\ovl{\MM}^{\quot,G}(C,X,F)$ by encoding the reduction of structure
group as a section and taking the closure of $\Hom(X,Y/G)$ (or rather,
those maps whose bundles are quotients of $F$) in $\Quot_{F^2/X/S}$.
\item (Inertia stacks) The {\em inertia stack} of a Deligne-Mumford
  (or Artin) stack $\XX$ is
$$ I_\XX := \XX \times_{\XX \times \XX} \XX $$
where both maps are the diagonal.  The objects of $I_\XX$ may be
identified with pairs $(x,g)$ where $x$ is an object of $\XX$ and $g$
is an automorphism of $x$.  For example, if $\XX = X/G$ is a global
quotient by a finite group then
$$ I_\XX = \cup_{[g] \in G/Ad(G)} X^g/Z_g $$
where $G/\Ad(G)$ denotes the set of conjugacy classes in $X$ and $Z_g$
is the centralizer of $g$.  There is also an interpretation as a hom
stack (see e.g. \cite{agv:gw})
$$ I_\XX = \cup_{r > 0} I_{\XX,r}, \quad I_{\XX,r} :=
\Hom^{\on{rep}}(B\mu_r, \XX) . $$
\item (Rigidified inertia stacks) \label{rigidified} The following
  stack plays an important role in Gromov-Witten theory of
  Deligne-Mumford stacks as developed by Abramovich-Graber-Vistoli
  \cite{agv:gw}.  If $\mu_r$ is the group of $r$-th roots of
  unity then $B\mu_r$ is an Deligne-Mumford stack.  If $\XX$ is a
  Deligne-Mumford stack then
$$ \ovl{I}_\XX = \cup_{r > 0} \ovl{I}_{\XX,r}, \quad \ovl{I}_{\XX,r} :=
  I_{\XX/r}/ B\mu_r . $$
is the {\em rigidified inertia stack} of representable morphisms from
$B \mu_r$ to $\XX$, see \cite{agv:gw}.  There is a canonical quotient
cover $\pi: I_\XX \to \ovl{I}_\XX$ which acts on cohomology as an
isomorphism
$$ \pi^* H^*(\ovl{I}_\XX,\Q) \to H^*(I_\XX,\Q) $$
so for the purposes of defining orbifold Gromov-Witten invariants,
$\ovl{I}_\XX$ can be replaced by $I_\XX$ at the cost of additional
factors of $r$ on the $r$-twisted sectors. If $\XX = X/G$ is a global
quotient of a scheme $X$ by a finite group $G$ then
$$ \ovl{I}_{X/G} = \coprod_{(g)} X^{\sss,g}/ (Z_g/ \lan g \ran) $$
where $\lan g \ran \subset Z_g$ is the cyclic subgroup generated by
$g$.
\item (Rigidified inertia stacks for locally free git quotients)
  Suppose that $X$ is a polarized smooth projective $G$-variety such
  that $X \qu G$ is locally free.  Then
$$ I_{X \qu G} = \coprod_{(g)} X^{\sss,g}/ Z_g $$
where $X^{\sss,g}$ is the fixed point set of $g \in G$ on $X^{\sss}$,
$Z_g$ is its centralizer, and the union is over all conjugacy classes,
$$ \ovl{I}_{X \qu G} = 
\coprod_{(g)} X^{\sss,g}/ (Z_g/ \lan g \ran)  $$
where $\lan g \ran$ is the (finite) group generated by $g$. 
\item (Stacks of nodal gauged maps) Consider the Artin stack
  $\ovl{\MM}_{g,n}$ of marked nodal curves and $X/G$ the quotient stack
  associated to the quotient of a projective scheme $X$ by a reductive
  group $G$.  Then $\Hom_{\ovl{\MM}_{g,n}}(\ovl{\CC}_{g,n},X/G)$ is an
  Artin stack.
\item (Stacks of parametrized nodal gauged maps) Let $C$ be a curve
  and $X$ a $G$-scheme.  Then
  $\Hom_{\ovl{\MM}_n(C)}(\ovl{\CC}_n(C),X/G)$ is the category that
  assigns to a morphism $T \to S$ the groupoid of marked nodal curves
  $ \hat{C} \to T \times C$, of class $[C]$ on the second factor,
  equipped with a principal $G$-bundle $P \to \hat{C}$ and a morphism
  $C \to P \times_G X$.
\item (Stacks of parametrized nodal affine gauged maps) Let $X$ be a
  $G$-scheme.  Then $\Hom_{\MM_{n,1}(\bA)}(\ovl{\CC}_{n,1}(\bA),X/G)$
  is the category that assigns to a morphism $T \to S$ the groupoid of
  marked affine nodal curves $ \hat{C} \to T$ equipped with a
  principal $G$-bundle $P \to \hat{C}$ and a morphism $\hat{C} \to P
  \times_G X$.  More generally,
  $\Hom_{\ovl{\MM}_{n,1}^{\tw}(\bA)}(\ovl{\CC}^{\tw}_{n,1}(\bA),X/G)$
  is the hom-stack allowing orbifold singularities in the domain at
  the nodes with infinite scaling.
\end{enumerate} 
\end{example}

\subsection{Twisted stable maps} 
\label{orb}

We recall the definitions of twisted curve and twisted stable map to a
Deligne-Mumford stack from Abramovich-Graber-Vistoli \cite{agv:gw},
Abramovich-Olsson-Vistoli \cite{aov:twisted}, and Olsson
\cite{ol:logtwist}.  These definitions are needed for the construction
of the moduli stack of affine gauged maps in the case that $X \qu G$
is an orbifold, but not if the quotient is free.  Denote by $\mu_r$
the group of $r$-th roots of unity.

\begin{definition} \label{twistedcurve} {\rm (Twisted curves)} 
Let $S$ be a scheme.  An {\em $n$-marked twisted curve} over $S$ is a
collection of data $(f: \cC \to S, \{ {\mathcal z}_i \subset \cC
\}_{i=1}^n)$ such that
\begin{enumerate} 
%
\item {\rm (Coarse moduli space)} $\cC$ is a proper stack over $S$
  whose geometric fibers are connected of dimension $1$, and such that
  the coarse moduli space of $\cC$ is a nodal curve $C$ over $S$.
\item {\rm (Markings)} The ${\mathcal z}_i \subset \cC$ are closed
  substacks that are gerbes over $S$, and whose images in $C$ are
  contained in the smooth locus of the morphism $C \to S$.
\item {\rm (Automorphisms only at markings and nodes)} If $\cC^{ns}
  \subset \cC$ denotes the {\em non-special locus} given as the
  complement of the ${\mathcal z}_i$ and the singular locus of $\cC
  \to S$, then $\cC^{ns} \to C$ is an open immersion.
\item {\rm (Local form at smooth points)} If $p \to C$ is a geometric point
  mapping to a smooth point of $C$, then there exists an integer $r$,
  equal to $1$ unless $p$ is in the image of some ${\mathcal z}_i$, an
  \'etale neighborhood $\Spec(R) \to C$ of $p$ and an \'etale morphism
  $\Spec(R) \to \Spec_S(\mO_S[x])$ such that the pull-back $\cC
  \times_S \Spec(R)$ is isomorphic to $ \Spec(R[z]/z^r = x )/\mu_r .$
\item {\rm (Local form at nodal points)} If $p \to C$ is a geometric
  point mapping to a node of $C$, then there exists an integer $r$, an
  \'etale neighborhood $\Spec(R) \to C$ of $p$ and an \'etale morphism
  $\Spec(R) \to \Spec_S(\mO_S[x,y]/(xy - t))$ for some $t \in \mO_S$
  such that the pull-back $\cC \times_S \Spec(R)$ is isomorphic to $
  \Spec(R[z,w]/zw = t', z^r = x, w^r = y )/\mu_r $ for some $t' \in
  \mO_S$.
\end{enumerate}
\end{definition} 

 Let $\XX$ be a smooth Deligne-Mumford stack proper over a scheme $S$
 over a field of characteristic zero with projective coarse moduli
 space $X$, or an open subset thereof.

\begin{definition} 
A {\em twisted stable map} from an $n$-marked twisted curve $(\pi :
\cC \to S, ( {\mathcal z}_i \subset \cC )_{i=1}^n )$ over $S$ to $\XX$
is a representable morphism of $S$-stacks $ u: \cC \to \XX $ such that
the induced morphism on coarse moduli spaces $ u_c: C \to X $ is a
stable map in the sense of Kontsevich \cite{ko:lo} from the
$n$-pointed curve $(C, \ul{z} = (z_1,\ldots, z_n ))$ to $X$, where
$z_i$ is the image of ${\mathcal z}_i$.  The {\em homology class} of a
twisted stable curve is the homology class $u_* [ \cC_s] \in
H_2(X,\Q)$ of any fiber $\cC_s$.
\end{definition} 

Twisted stable maps naturally form a $2$-category, but every
$2$-morphism is unique and invertible if it exists, and so this
$2$-category is naturally equivalent to a $1$-category which forms a
stack over schemes \cite{agv:gw}.

\begin{theorem} (\cite[4.2]{agv:gw})
  The stack $\ovl{\M}_{g,n}(\XX)$ of twisted stable maps from
  $n$-pointed genus $g$ curves into $\XX$ is a Deligne-Mumford stack.
  If $\XX$ is proper, then for any $c > 0$ the union of substacks
  $\ovl{\M}_{g,n}(\XX,d)$ with homology class $d \in H_2(\XX,\Q)$
  satisfying $(d, [\omega])< c$ is proper.
\end{theorem}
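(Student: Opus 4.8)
The plan is to establish this theorem by citing and adapting the foundational work of Abramovich-Graber-Vistoli, since the statement is attributed directly to \cite[4.2]{agv:gw}. The proof strategy rests on two pillars: first, showing that $\ovl{\M}_{g,n}(\XX)$ is a Deligne-Mumford stack, and second, establishing properness of the bounded-degree substacks. For the first part, I would reduce to the analogous statement for ordinary stable maps into the coarse moduli space $X$. Recall that a twisted stable map consists of a representable morphism $u: \cC \to \XX$ whose induced map $u_c: C \to X$ on coarse spaces is stable in Kontsevich's sense. The key observation is that the forgetful assignment $u \mapsto u_c$ gives a morphism $\ovl{\M}_{g,n}(\XX) \to \ovl{\M}_{g,n}(X)$ to the (already known to be Deligne-Mumford) stack of stable maps into the projective scheme $X$, and one analyzes the fibers of this morphism.

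First I would verify the Deligne-Mumford property via the stack-theoretic criterion recalled earlier in the excerpt: an Artin stack is Deligne-Mumford precisely when all automorphism groups are finite (in characteristic zero). The automorphisms of a twisted stable map are controlled by the automorphisms of the underlying twisted curve $\cC$ together with those of the map $u$. Since $u$ is required to be representable, the automorphism group of $u$ injects into automorphisms of the domain; the local structure of twisted curves from Definition \ref{twistedcurve} (charts of the form $\Spec(R[z]/(z^r=x))/\mu_r$ at marked points and the analogous form at nodes) shows that the extra automorphisms introduced by the stacky structure are finite cyclic groups $\mu_r$. Combined with the finiteness of automorphisms of the coarse stable map $u_c$ (which holds by Kontsevich stability), this forces the automorphism groups of twisted stable maps to be finite. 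The Artin property itself follows because $\ovl{\M}_{g,n}(\XX)$ is built from hom-stacks $\Hom(\cC, \XX)$ over the Artin stack of twisted curves, and the representability condition cuts out a locally closed substack; boundedness arguments then give the algebraicity.

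For properness, I would apply the valuative criterion, checking separatedness and universal closedness over a discrete valuation ring $R$ with fraction field $K$. The essential input is the \emph{boundedness} of the family of twisted stable maps with $(d,[\omega]) < c$: the inequality on the symplectic area bounds the degree of $u_c$ with respect to the polarization on $X$, which bounds the arithmetic genus and number of components of the domain curves, hence bounds the orders $r$ of the stabilizers appearing in the twisted structure. Given a twisted stable map over $\Spec(K)$, one first extends the coarse stable map $u_c$ over $\Spec(R)$ using properness of $\ovl{\M}_{g,n}(X)$, then lifts this extension to the twisted setting by choosing compatible roots at the nodes and markings of the special fiber — this is where the log-twisted curve formalism of Olsson \cite{ol:logtwist}, invoked earlier, provides the needed functorial control and guarantees the extension is unique up to unique isomorphism.

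The hard part will be the lifting step in the valuative criterion: extending the \emph{twisted} curve and map over the special fiber compatibly, since the stacky structure at nodes involves the relation $zw = t'$ with $z^r = x, w^r = y$, and matching the root-of-unity data across the degeneration requires care. This is precisely the obstruction that Abramovich-Graber-Vistoli resolve by working with the log structure: the twisted curve carries a canonical log structure at its nodes, and the balancing condition (equal twisting on the two branches of each node) ensures that a lift exists and is essentially unique. Since this theorem is quoted verbatim from \cite[4.2]{agv:gw}, the cleanest path is to cite their argument directly for the bulk of the work, noting only that the boundedness clause follows from the energy bound $(d,[\omega]) < c$ translating into a polarization-degree bound on $X$; I would not reproduce their full valuative-criterion computation here.
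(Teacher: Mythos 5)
Your proposal is correct and follows essentially the same route as the paper, which simply cites \cite[4.2]{agv:gw} and remarks only that the proof uses the equivalence of the category of twisted curves with log-twisted curves --- precisely the Olsson log-twisted formalism you invoke for the lifting step in the valuative criterion. Your additional sketch (finiteness of automorphisms via representability for the Deligne--Mumford property, and boundedness from the polarization-degree bound for properness) is a faithful outline of the argument in the cited reference and introduces nothing at odds with the paper.
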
 

The proof uses the equivalence of the category of twisted curves with
log-twisted curves.  Let $\ovl{I}_\XX$ denote the rigidified inertia
stack as in Proposition \ref{homstacks} \eqref{rigidified}.  The
moduli stack of twisted stable maps admits evaluation maps
$$ \ev: \ovl{\M}_{g,n}(\XX) \to \ovl{I}_\XX^n, 
\quad  \ovl{\ev}: \ovl{\M}_{g,n}(\XX) \to \ovl{I}_\XX^n, 
 $$
where the second is obtained by composing with the involution $
\ovl{I}_{\XX} \to \ovl{I}_{\XX} $ induced by the map $\mu_r \to \mu_r,
\zeta \mapsto \zeta^{-1}$.  There is a modification of the definition
which produces evaluation maps to the unrigidified moduli stacks: Let
$\ovl{\M}_{g,n}^{\fr}(\XX)$ denote the moduli space of {\em framed}
twisted stable maps, that is, twisted stable maps with sections of the
gerbes at the marked points \cite{agv:gw}.  These stacks are
$\prod_{i=1}^n r_i$-fold covers of $\ovl{\M}_{g,n}(\XX)$, where $r_i:
\ovl{\M}_{g,n}(\XX) \to \Z_{\ge 0}$ is the order of the isotropy group
at the $i$-th marking, and admit evaluation maps
$$ \ev^{\fr}: \ovl{\M}^\fr_{g,n}(\XX) \to I_\XX^n, \quad \ovl{\ev}^{\fr}:
\ovl{\M}^\fr_{g,n}(\XX) \to I_\XX^n.
 $$
If $\cC$ is a finite disjoint union of twisted curves, then a stable
map from $\cC$ to $\XX$ is a stable map of each of its components.
For any possibly disconnected combinatorial type $\Gamma$, we denote
by $\ovl{\M}_{g,n,\Gamma}(\XX)$ resp. $\ovl{\M}_{g,n,\Gamma}^\fr(\XX)$
the stack of stable maps resp. framed stable maps whose underlying
stable map of schemes has combinatorial type $\Gamma$.

\begin{proposition}  \label{cutcollapse}
\begin{enumerate} 
\item {\rm (Cutting an edge)} If $\Gamma'$ is obtained from $\Gamma$
  by cutting an edge, there is a morphism
\begin{equation} \label{glue}  \mathcal{G}(\Ups,X): 
\ovl{\M}_{g,n,\Gamma'}(\XX) \times_{\ovl{I}_{\XX}^2} \ovl{I}_{\XX}
  \to \ovl{\M}_{g,n,\Gamma}(\XX)
\end{equation}  
where the second morphism in the fiber product is the diagonal $\Delta: \ovl{I}_{\XX} \to
\ovl{I}_{\XX}^2$, and an isomorphism
$$ \ovl{\M}^\fr_{g,n,\Gamma'}(\XX) \times_{{I}_{\XX}^2} {I}_{\XX} \to
  \ovl{\M}^\fr_{g,n,\Gamma}(\XX) .$$
\item {\rm (Collapsing an edge)} If $\Gamma'$ is obtained from
  $\Gamma$ by collapsing an edge then there is an isomorphism
$$\ovl{\M}_{g,n,\Gamma'}(\XX) \times_{\ovl{\MM}_{n,\Gamma'}}
  \ovl{\MM}_{g,n,\Gamma} \to \ovl{\M}_{g,n,\Gamma}(\XX)$$
and similarly for framed twisted stable maps.
\end{enumerate} 
\end{proposition}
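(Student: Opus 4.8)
The plan is to treat both items by the gluing/normalization and base-change techniques of Abramovich--Graber--Vistoli \cite{agv:gw}, adapted to the combinatorial decorations recorded by $\Gamma$; throughout I would work with families over a test scheme $T$ and check that each construction commutes with base change, so that it descends to a morphism of the associated stacks. For cutting an edge I would first construct the gluing morphism $\mathcal{G}(\Ups,X)$ on families. An object of $\ovl{\M}_{g,n,\Gamma'}(\XX) \times_{\ovl{I}_{\XX}^2} \ovl{I}_{\XX}$ over $T$ is a twisted stable map $u': \cC' \to \XX$ of type $\Gamma'$ together with the datum that the two evaluation maps at the marked gerbes $\mathcal z_+, \mathcal z_-$ produced by cutting the edge agree in $\ovl{I}_\XX$; here the diagonal $\Delta: \ovl{I}_\XX \to \ovl{I}_\XX^2$ is paired with $(\ev_+, \ovl{\ev}_-)$, so that the isotropy representations along the two branches are mutually inverse. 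I would then glue $\cC'$ to a twisted curve $\cC$ of type $\Gamma$ by identifying $\mathcal z_+$ with $\mathcal z_-$: locally the two marked gerbes are banded by a common $\mu_r$, and the standard twisted node $\Spec(R[z,w]/(zw=t',\,z^r=x,\,w^r=y))/\mu_r$ is the pushout, the balancing being exactly the inverse-isotropy condition imposed by the fiber product. Since $u'|_{\mathcal z_+}$ and $u'|_{\mathcal z_-}$ agree as objects of $\ovl{I}_\XX$ after the involution, the two restrictions descend to a single representable morphism from the node to $\XX$, and descent for representable morphisms to the Deligne--Mumford stack $\XX$ extends $u'$ to a twisted stable map $u: \cC \to \XX$ of type $\Gamma$. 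Functoriality in $T$ yields the morphism \eqref{glue}.

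For the framed statement I would observe that a framing supplies sections of the gerbes $\mathcal z_\pm$, hence a canonical trivialization of their bands; matching framings---the fiber product is now over the unrigidified $I_\XX^2$ via the diagonal $I_\XX \to I_\XX^2$---rigidifies the gluing data, so that no residual $\mu_r$ ambiguity remains. The gluing is then invertible: the inverse takes a framed twisted stable map of type $\Gamma$, normalizes it along the node determined by the cut edge, and records the two preimages as framed marked points. One checks that the two constructions are mutually inverse over every $T$, giving the asserted isomorphism of framed moduli stacks.

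For collapsing an edge I would argue by descent along the contraction of curves. Collapsing the edge gives a morphism of prestable-curve stacks $c: \ovl{\MM}_{g,n,\Gamma} \to \ovl{\MM}_{g,n,\Gamma'}$, realized on universal curves by a contraction $\cC \to \cC'$ of the component corresponding to the collapsed edge; by the definition of the combinatorial type $\Gamma$ this component carries trivial homology class, so every type-$\Gamma$ map is constant there. A twisted stable map of type $\Gamma$ thus consists of a curve $\cC$ of type $\Gamma$ together with a map $u:\cC \to \XX$ which, being constant on the contracted component, factors uniquely as $u = u' \circ (\cC \to \cC')$ with $u': \cC' \to \XX$ a stable map of type $\Gamma'$. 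This is exactly the data of a point of $\ovl{\M}_{g,n,\Gamma'}(\XX) \times_{\ovl{\MM}_{n,\Gamma'}} \ovl{\MM}_{g,n,\Gamma}$, and the assignments $(u',\cC) \mapsto (u' \circ (\cC \to \cC'), \cC)$ and $(u,\cC) \mapsto (u', \cC)$ are mutually inverse and compatible with base change. The framed version follows by carrying along the sections of the gerbes, which are unaffected by contracting a $u$-constant component.

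The technical heart---and the step I expect to be the main obstacle---is the twisted gluing in the cutting case: constructing the twisted nodal curve as a family over $T$, verifying that descent of the representable map $u'$ across the new node is valid and commutes with base change, and checking that the rigidified-versus-framed distinction corresponds precisely to obtaining a morphism versus an isomorphism. The balancing bookkeeping---ensuring that the fiber product over $\ovl{I}_\XX$ (resp. $I_\XX$) imposes exactly the inverse-isotropy condition needed to form a legitimate twisted node---is where the argument must be made with care; once this is in place, collapsing an edge is essentially formal descent.
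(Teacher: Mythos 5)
The paper states this proposition without proof (it is a repackaging of results of Abramovich--Graber--Vistoli), so your write-up is being measured against the standard argument rather than against anything in the text. Your treatment of part (a) is essentially that standard argument and is fine: the fiber product over $\ovl{I}_\XX$ via $(\ev_+,\ovl{\ev}_-)$ and the diagonal encodes exactly the balancing condition needed to form a twisted node, the residual $\mu_r$-ambiguity in identifying the two gerbes is why one gets only a morphism in the rigidified case, and framings kill that ambiguity and upgrade it to an isomorphism.

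Part (b), however, rests on a misreading of what ``collapsing an edge'' does. An edge of a modular graph corresponds to a \emph{node} of the curve, not to a component; vertices correspond to components. If $\Gamma'$ is obtained from $\Gamma$ by collapsing an edge, then curves of type $\Gamma$ (which have the extra node) are \emph{degenerations} of curves of type $\Gamma'$, and $\ovl{\MM}_{g,n,\Gamma} \to \ovl{\MM}_{g,n,\Gamma'}$ is the inclusion of the more degenerate stratum closure --- there is no contraction of universal curves $\cC \to \cC'$, and no component is forced to carry trivial homology class (when an edge joining vertices with classes $\beta_1,\beta_2$ is collapsed, the resulting vertex carries $\beta_1+\beta_2$, with no triviality hypothesis). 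Consequently your map $(u',\cC) \mapsto (u' \circ (\cC \to \cC'),\cC)$ does not exist, and the claim ``every type-$\Gamma$ map is constant there'' is false. The correct argument is much more formal than the one you gave: an object of $\ovl{\M}_{g,n,\Gamma'}(\XX) \times_{\ovl{\MM}_{n,\Gamma'}} \ovl{\MM}_{g,n,\Gamma}$ over $T$ is a twisted stable map of type $\Gamma'$ together with the datum that its domain curve actually lies in the stratum of type $\Gamma$, and this is precisely the definition of an object of $\ovl{\M}_{g,n,\Gamma}(\XX)$; the two forgetful functors are mutually inverse, with no contraction or constancy statement needed. The argument you wrote belongs to the ``forgetting a tail''/stabilization morphisms, not to edge collapse.
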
 

\begin{example} \label{torictwisted} {\rm (Inertia stacks for toric orbifolds)} 
Consider a stack $\XX = X \qu G$ obtained as the quotient of a vector
space $X$ by a torus $G$ with weights $\mu_1,\ldots, \mu_k$ at a
weight $\nu \in \g^\dual$, see \eqref{toricss}.  For each subset $\{
\mu_i, i \in I \}$ with $\nu \in \on{span} \{ \mu_i, i \in I \}$, let
$\Lambda_I$ denote the lattice generated by the $\mu_i, i \in I$, and
$G_I = \exp(\Lambda_I^\dual)$ the subgroup generated by the dual
lattice.  Let $X_I$ denote the span of the weight spaces for $\mu_i, i
\in I$ and $X_I \qu G$ the git quotient of $X_I$ by $G$.  For any
element $g \in G$ let $I(g)$ denote the set of $i$ such that $g \in
\ker(\exp(\mu_i): G \to \C^\times).$ Then
$$ I_{X \qu G} = \cup_{g \in G} (X_{I(g)} \qu G) $$
(finite by the previous paragraph) and is therefore a union of toric
stacks.  For each $i \in I(g)$, vanishing of the coordinate in
$X_{I(g)}$ corresponding to $i$ defines a divisor $\ti{D}_i$, whose
(possibly empty) image in $X_{I(g)}$ is a divisor $D_{i,g}$.  The
cohomology of $I_{X \qu G}$ is generated by the classes of the
divisors $D_{i,g}, i \in I(g)$.
\end{example}

\subsection{Coarse moduli spaces} 
\label{converge}

A {\em coarse moduli space} for a stack $\XX$ is an algebraic space
$X$ together with a morphism $\pi: \XX \to X$ such that $\pi$ induces
a bijection between the geometric points of $X$ and $\XX$ and $\pi$ is
universal for maps to algebraic spaces.  By a theorem of Keel-Mori
\cite{km:quot}, coarse moduli spaces for Artin stacks with finite
inertia (in particular, Deligne-Mumford stacks in characteristic zero)
exist.

\begin{proposition} \label{coarseproper}  
A Deligne-Mumford stack over $\C$ is proper iff its coarse moduli
space is proper iff its coarse moduli space is compact and Hausdorff
in the analytic topology.
\end{proposition}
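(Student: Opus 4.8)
The plan is to factor everything through the coarse moduli map $\pi: \XX \to X$ supplied by Keel--Mori, and to prove the two stated equivalences separately. The engine of the argument is a short list of properties of $\pi$: it exists because a Deligne--Mumford stack in characteristic zero has finite inertia, it is proper and surjective, and it induces a bijection on geometric points. I would record these at the outset, citing Keel--Mori \cite{km:quot}, since the remainder is essentially formal manipulation of these facts.

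For the first equivalence ($\XX$ proper iff $X$ proper), one direction is immediate: if $X \to \Spec \C$ is proper then, since $\pi$ is proper, the composite $\XX \to X \to \Spec\C$ is a composition of proper morphisms, hence proper. For the converse I would descend each of the three constituent properties along $\pi$. Universal closedness transfers because $\pi$ is surjective: for any base change $S' \to \Spec\C$ the map $\pi_{S'}$ is again surjective, so the image in $S'$ of a closed subset $Z \subset X_{S'}$ equals the image of the closed set $\pi_{S'}^{-1}(Z)$, which is closed since $\XX \to \Spec\C$ is universally closed. Finite type and separatedness of $X$ over $\C$ are precisely the statements that the coarse space of a finite-type (resp. separated) stack is finite type (resp. separated), which I would quote from the structural theory of coarse moduli spaces rather than reprove.

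For the second equivalence I would pass to the analytic category. The coarse space $X$ is an algebraic space locally of finite type over $\C$ (our stacks are locally Noetherian), so it has an analytification $X^{\an}$, and the comparison theory gives the two needed correspondences: $X$ is separated over $\C$ exactly when $X^{\an}$ is Hausdorff, and a separated finite-type $X$ is universally closed (hence proper) over $\C$ exactly when $X^{\an}$ is compact. In the direction ``proper implies compact Hausdorff'' separatedness yields Hausdorffness and universal closedness yields compactness; in the reverse direction compactness of $X^{\an}$ forces quasi-compactness, hence finite type given local finiteness, while compact plus Hausdorff analytically returns universal closedness and separatedness.

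The hard part will be the two places where I am leaning on external structure theory rather than arguing directly: first, the descent of separatedness and finite type along the coarse moduli morphism, which is genuinely a theorem about coarse spaces and not a formal consequence of $\pi$ being proper and surjective; and second, the analytic comparison for an \emph{algebraic space} (as opposed to a scheme), which requires Artin's analytification of algebraic spaces together with the properness--compactness dictionary in that generality. Once these two inputs are granted, assembling them into the stated chain of equivalences is routine.
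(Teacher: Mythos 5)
Your argument is correct and follows essentially the same route as the paper, which simply delegates the first equivalence to Olsson \cite[2.10]{olsson:homstacks} and the second to the folklore analytic comparison in \cite[Theorem 3.17]{oss:fix}; the details you supply (properness and surjectivity of the Keel--Mori map $\pi$, descent of universal closedness along a surjection, separatedness and finite type of the coarse space, and Artin's analytification of algebraic spaces) are precisely the content of those references. No gap.
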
 

\begin{proof}  
The first equivalence is e.g.  \cite[2.10]{olsson:homstacks}.  The second
equivalence is folklore, see for example \cite[Theorem 3.17]{oss:fix}.
\end{proof} 

Artin \cite{ar:vd} has given conditions for a category fibered in
groupoids to be an Artin stack.  In particular, each object should
admit a versal deformation, universal if the stack is Deligne-Mumford.
Versal deformations give a notion of {\em topological convergence} of
a sequence of objects in the category, defined if the corresponding
sequence of points $s_\nu$ in the parameter space $S$ for a versal
deformation converges to a point $s$, in which case the limit is the
equivalence class of objects corresponding to $s$.  In particular,
these notions define the underlying $C^0$ topology of the coarse
moduli spaces.

\begin{example} 
\begin{enumerate}
\item (Convergence of nodal curves) Any projective nodal curve has a
  versal deformation given in the analytic category by a gluing
  construction in which small balls around the nodes are removed and
  the components glued together via maps $z \mapsto \delta/z$
  \cite[p. 176]{ar:alg2} in local coordinates $z$ near the node.
\item (Convergence of stable maps) Any map from a projective nodal
  curve to a projective variety has a versal deformation given by
  considering its graph as an element in a suitable Hilbert scheme of
  subvarieties, see for example \cite{fga}.  The construction of the
  Hilbert scheme is reduced to the construction of a Quot scheme,
  which in turn reduces to representability of the Grassmannian.  For
  the Grassmannian topological convergence of a subbundle implies
  topological convergence in the sense described above for (uni)versal
  deformations.  It follows that topological convergence for maps is
  the usual notion of convergence of stable maps discussed in, for
  example, McDuff-Salamon \cite{ms:jh}.
\item (Convergence of bundles) Any vector bundle over a curve has a
  versal deformation given by considering it, after twisting by a
  sufficiently positive line bundle, as a quotient of a trivial
  bundle.  A similar statement holds for principal bundles for
  reductive groups by considering them as vector bundles with
  reductions.  Topological convergence of a sequence of isomorphism
  classes of bundles is the usual notion of topological convergence of
  holomorphic bundles, that is, $C^0$ convergence of the corresponding
  holomorphic structures on the components after complex gauge
  transformation.
\item (Convergence of isomorphism classes of vector bundles) In
  particular, let $C$ be a smooth projective curve and $\M =
  \Bun^{\sss}_{GL(r)}(C)$ the moduli stack of semistable bundles of
  rank $r \ge 0$.  By a theorem of Narasimhan-Seshadri \cite{ns:st},
  if stable=semistable then the coarse moduli space $M$ for $\M$
  admits a homeomorphism $\phi$ to its image in the moduli space of
  unitary representations of the fundamental group $R =
  \Hom(\pi_1(C),U(r))/U(r)$, where $\Hom(\pi_1(C),U(r))$ denotes the
  topological space of representations of $\pi_1(C)$ in $U(r)$.  The
  Hilbert scheme construction, or the construction of universal
  families in \cite{ns:st}, shows that inverse map $R \to M$ is
  continuous.
\end{enumerate} 
\end{example}

\section{Stable gauged maps}
\label{gaugedmaps}

In this section we identify the moduli space of symplectic vortices as
the coarse moduli space of a substack of the moduli stack of gauged
maps satisfying a semistability condition introduced by Mundet
\cite{mund:corr} and further studied by Schmitt \cite{schmitt:univ},
\cite{schmitt:git}.  This correspondence of Hitchin-Kobayashi type
implies that the moduli space of symplectic vortices, if every vortex
has finite automorphism group, is the moduli space of a proper
Deligne-Mumford stack.

\subsection{Gauged maps}

Let $G$ be a complex reductive group, $X$ be a smooth projective
$G$-variety and ${C}$ a smooth connected projective curve.  In this
section we construct the stack $\ovl{\MM}_n^G(C,X)$ of $n$-marked
gauged maps for integers $n \ge 0$.

\begin{definition}  
An {\em $n$-marked nodal gauged map} from ${C}$ to $X$ over a scheme
$S$ is a morphism $u: \hat{C} \to C \times X/G$ from a nodal curve
$\hat{C}$ over $S$ whose projection onto the first factor has homology
class $[C]$, such that if $C_{i,s} \subset \hat{C}_s$ is a component
that maps to a point in $C$, then the bundle corresponding to $u|C_i$
is trivial.  More explicitly, such a morphism is given by a datum
$(\hat{C},P,u,\ul{z})$ where
\begin{enumerate}
\item {\rm (Nodal curve)} $\hat{C} \to S$ is a proper flat morphism
  with reduced nodal curves as fibers;
\item {\rm (Bundle over the principal component)} $ P \to {C} \times
  S$ is a principal $G$-bundle;
\item {\rm (Section of the associated fiber bundle)} ${u}: \hat{C} \to
  P(X) := (P \times X)/G$ is a family of stable maps with base class
  $[{C}]$, that is, the composition of ${u}$ with the projection $P(X)
  \to {C}$ has class $[{C}]$.
\end{enumerate} 
A {\em morphism} between gauged maps $(S,\hat{C},P,u)$ and
$(S',\hat{C}',P',u')$ consists of a morphism $\beta: S \to S'$, a
morphism $\phi: P \to (\beta \times 1)^*P'$, and a morphism $\psi:
\hat{C} \to \hat{C}'$ such that the first diagram below is Cartesian
and the second and third commute:
$$ \begin{diagram} 
\node{\hat{C}} \arrow{e}
 \arrow{s,l}{\psi} \node{S} \arrow{s,r}{\beta}
\\ \node{\hat{C}'} \arrow{e} \node{S'}
\end{diagram} \quad \begin{diagram} 
\node{P} \arrow{e} \arrow{s,t}{\phi} \node{S \times {C}} \arrow{s,b}{\on{id}} 
\\ \node{(\beta \times 1)^*P'} \arrow{e}
\node{S \times {C}} \end{diagram} \quad 
\begin{diagram} 
\node{\hat{C}} \arrow{e,t}{u} \arrow{s,t}{\psi} \node{P(X)} \arrow{s,b}{[\phi \times \on{id_X}]}
\\ \node{\hat{C}'} \arrow{e,b}{u'} \node{P'(X).}
\end{diagram} $$
An {\em $n$-marked} nodal gauged map is equipped with an $n$-tuple
$(z_1,\ldots,z_n) \in \hat{C}^n$ of distinct smooth points on
$\hat{C}$.
\end{definition} 

Let $\ovl{\MM}^G_n(C,X)$ denote the category of $n$-marked nodal
gauged maps, $\ovl{\MM}^{G,st}_n(C,X)$ the subcategory where $u:
\hat{C} \to P(X)$ is a stable map, and $\MM^G_n(C,X)$ the subcategory
where $\hat{C} \to C$ is an isomorphism, that is, the domain is
irreducible.  The functor from $ \ovl{\MM}^G_n(C,X)$ to schemes which
assigns to any datum $ (S,{C},P,{u},\ul{z})$ the base scheme $S$ makes
$\ovl{\MM}^G_n(C,X)$ resp.  $\ovl{\MM}^{G,st}_n(C,X)$ resp.
${\MM}^{G}_n(C,X)$ into a category fibered in groupoids.  We denote by
$\ovl{\CC}_{n}^G(C,X) \to \ovl{\MM}_{n}^{G}(C,X)$ the universal curve,
consisting of a datum $(P \to C, u: \hat{C} \to P(X),\ul{z}: S \to
\hat{C}^n, z': S \to \hat{C})$ with $z$ not necessarily mapping to the
smooth locus of $\hat{C}$.  The universal curve maps canonically to
$X/G$ via evaluation at $z'$:
$$  \ovl{\CC}_{n}^G(C,X) \to X/G, \quad (S,C,P,\ul{z},z') \mapsto (S, (\pi \circ u
\circ z')^* P, u \circ z') .$$ 
%

\begin{theorem}  \label{artin} 
$\ovl{\MM}_n^{G}(C,X)$ resp. $\ovl{\MM}_n^{G,st}(C,X)$ resp.
  $\MM_n^G(C,X)$ is a (non-finite-type, non-separated) Artin stack.
\end{theorem}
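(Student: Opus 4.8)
The plan is to realize $\ovl{\MM}_n^G(C,X)$ as a relative stack of sections over an Artin stack parametrizing the underlying curves and bundles, and then to extract the other two stacks as open substacks. First I would split the datum $(\hat{C},P,u,\ul{z})$ into its three constituents. The curve $\hat{C}\to S$ with its projection $\hat{C}\to C$ of class $[C]$ and the markings $\ul z$ is exactly an object of the stack $\ovl{\MM}_n(C)$ of nodal $C$-parametrized curves, which is Artin by Example \ref{stacksofschemes}. The $G$-bundle $P\to C\times S$ is an object of $\Bun_G(C)$, which is Artin since $C$ is projective (Example \ref{bundlesstack1}). Writing $\B:=\ovl{\MM}_n(C)\times\Bun_G(C)$, a product of Artin stacks and hence Artin, I have packaged the first two pieces as a morphism $S\to\B$. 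The point of putting $P$ on $C$ (and only pulling it back to $\hat C$) is that the induced bundle is automatically trivial on every fibre of $\hat C\to C$, so the triviality on contracted bubble components required in the definition is built into $\B$ from the outset, rather than imposed afterwards on the parametrized hom-stack $\Hom_{\ovl{\MM}_n(C)}(\ovl{\CC}_n(C),X/G)$ of Example \ref{homstacks}, where the bundle is allowed to live on all of $\hat C$.

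Next I would form over $\B$ the universal curve $\pi:\hat{\CC}\to\B$, pulled back from $\ovl{\CC}_n(C)\to\ovl{\MM}_n(C)$; this is a representable proper flat morphism, being a family of nodal curves. The parametrization and the projection to $\Bun_G(C)$ combine to a morphism $g:\hat{\CC}\to C\times\Bun_G(C)$, and pulling back the universal bundle along $g$ gives a $G$-bundle $Q$ on $\hat{\CC}$. The remaining datum $u$ is then precisely a section of the associated fibre bundle $Q(X)=(Q\times X)/G\to\hat{\CC}$, equivalently a lift of the classifying morphism $\hat{\CC}\to BG$ of $Q$ to a morphism $\hat{\CC}\to X/G$. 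Thus $\ovl{\MM}_n^G(C,X)$ is the relative section stack of $Q(X)\to\hat{\CC}$ over $\B$. Because $X$ is a polarized $G$-variety, $Q(X)\to\hat{\CC}$ is projective and representable, and sections carry no automorphisms beyond those of $(\hat C,P)$; hence the forgetful morphism $\ovl{\MM}_n^G(C,X)\to\B$ is representable. Its algebraicity is exactly the algebraicity of the relative $\Hom$-to-a-quotient-stack problem handled by Lieblich \cite[2.3.4]{lieblich:rem} and Olsson \cite[Lemma C.5]{aov:twisted} (Example \ref{homstoquotients}), applied after passing to a smooth atlas of $\B$; see also Olsson \cite{olsson:homstacks}. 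Since $\B$ is Artin, $\ovl{\MM}_n^G(C,X)$ is an Artin stack.

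The two remaining stacks need no further algebraicity input, since they are open substacks. The locus $\ovl{\MM}_n^{G,st}(C,X)$ on which $u$ is a stable map is cut out by the condition that every component contracted by $u$ carry at least three special points in genus zero (one in genus one), which is an open condition in families; hence it is an open substack of $\ovl{\MM}_n^G(C,X)$ and in particular Artin. Likewise $\MM_n^G(C,X)$, where $\hat C\to C$ is required to be an isomorphism, is the open locus where the domain is smooth and irreducible, i.e. the complement of the boundary, and is therefore an open substack and Artin.

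Finally, both the failure of finite type and the failure of separatedness are already visible in the factor $\Bun_G(C)$: its connected components are indexed by the unbounded topological type of the bundle (and the bubble classes are likewise unbounded), so the total stack is not of finite type, while $\Bun_G(C)$ fails the valuative criterion of separatedness through the usual jumping phenomenon for bundles degenerating over a punctured disc, and this non-separatedness is inherited by $\ovl{\MM}_n^G(C,X)$. I expect the genuine obstacle to be the algebraicity asserted in the second paragraph: every other step is a product, an open-substack passage, or formal bookkeeping, whereas representability of the sections of $Q(X)\to\hat{\CC}$ over the Artin base $\B$ is the one nonformal ingredient, and is precisely where the projectivity of $X$ and the hom-stack algebraicity results of Lieblich and Olsson are essential.
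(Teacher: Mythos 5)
Your proposal is correct and rests on the same essential input as the paper's proof, namely Lieblich's algebraicity of hom-stacks into quotient stacks by reductive groups (Example \ref{homstacks} \eqref{homstoquotients}), together with the observation that stability of the section and irreducibility of the domain are open conditions; but it packages the moduli problem differently. The paper realizes $\ovl{\MM}_n^G(C,X)$ directly as a substack of $\Hom_{\ovl{\MM}_n(C)}(\ovl{\CC}_n(C),X/G)$, in which the $G$-bundle lives on the nodal domain $\hat{C}$, and must then argue that triviality of the bundle on the contracted components is an open condition. You instead keep the bundle on $C\times S$, as in the definition of a gauged map, forming the base $\ovl{\MM}_n(C)\times\Bun_G(C)$ and taking the relative stack of sections of the pulled-back associated bundle $Q(X)$ over the universal curve; triviality on the bubbles is then automatic. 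What you gain is that the openness-of-triviality step disappears (and your presentation is arguably more faithful to the definition, which places $P$ on $C\times S$); what you pay is that the identification with the hom-stack picture is no longer tautological: a section of $Q(X)$ is a morphism $\hat{C}\to X/G$ together with an isomorphism of its induced bundle with $Q$, so your stack is most cleanly seen as a fiber product of Lieblich's hom-stack with your base over the stack of bundles on the universal curve --- which is how the algebraicity you assert in your second paragraph should be justified (alternatively, since $Q(X)\to\hat{C}$ is a projective representable morphism, Grothendieck's relative Hom-scheme from Example \ref{homstacks} already suffices and is more elementary). Your remarks locating the failure of finite type and of separatedness in the factor $\Bun_G(C)$ go beyond what the paper proves and are correct.
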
 
\begin{proof} 
If $\ovl{\CC}_{n}(C)\to \ovl{\MM}_{n}(C) $ is the universal curve, $
\Hom_{\ovl{\MM}_{n}(C)}(\ovl{\CC}_{n}(C), X/G)$ is an Artin stack by the
results of Section \ref{homstacks} \eqref{homstoquotients}.  The stack
$\ovl{\MM}^G_n(C,X)$ is the substack of
$\Hom_{\ovl{\MM}_{n}(C)}(\ovl{\CC}_{n}(C), X/G)$
corresponding to morphisms $f: \hat{C} \to X/G$ such that on each
component $\hat{C}_i$ mapping to a point in $C$, the principal
$G$-bundle $P_i \to \hat{C}_i$ defined by $f$ is trivial.  Since
triviality on the bubbles is an open condition, $\ovl{\MM}^G_n(C,X)$ is
an Artin stack as well.  The condition that $u: \hat{C} \to P(X)$ is
stable (has no infinitesimal automorphisms) is an open condition,
hence $\ovl{\MM}^{G,st}_n(C,X)$ is an open substack, hence also an
Artin stack.  Similarly the locus $\MM^G_n(C,X)$ where $\hat{C} \cong
C$ is open and so also Artin.
\end{proof}

\begin{lemma}  \label{collapse1}
{\rm (Existence of a morphism collapsing unstable components)} 
 There is a morphism $\ovl{\MM}_n^{G}(C,X) \to \ovl{\MM}_n^{G,st}(C,X)$
 collapsing unstable components.  The composition
 $\ovl{\MM}_n^{G,st}(C,X) \to \ovl{\MM}_{n-1}^{G}(C,X) \to
 \ovl{\MM}_{n-1}^{G,st}(C,X)$ collapsing unstable components is
 isomorphic to the universal curve $\ovl{\CC}_{n-1}^G(C,X) \to
 \ovl{\MM}_{n-1}^{G}(C,X)$, and in particular proper.
\end{lemma}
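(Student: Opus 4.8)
The plan is to imitate, in the gauged setting, the collapsing construction for stable maps recalled in Example \ref{homstacks}\eqref{stablemaps}, and then to identify the resulting forgetful morphism with the universal curve by a Knudsen-type case analysis. First I would construct the collapsing morphism $\ovl{\MM}_n^G(C,X) \to \ovl{\MM}_n^{G,st}(C,X)$ by a relative $\Proj$ construction. Since $X$ is a polarized $G$-variety, fix a $G$-equivariant ample line bundle $\tilde L \to X$ and an ample line bundle $L_C \to C$. Given a family $(\hat C, P, u, \ul z)$ of nodal gauged maps over $S$, the bundle $P(\tilde L) = (P \times \tilde L)/G$ is a line bundle on $P(X)$, which I pull back by $u$; writing $\pi_C : \hat C \to C$ for the composite of $u$ with the projection $P(X) \to C$, I form
\[
\mathcal L = \omega_{\hat C/S}[z_1 + \ldots + z_n] \otimes u^* P(\tilde L)^{\otimes a} \otimes \pi_C^* L_C^{\otimes b}
\]
for integers $a, b \gg 0$, and set $\hat C^{st} = \Proj \bigoplus_{m \ge 0} \pi_* \mathcal L^{\otimes m}$. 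The role of the two ample factors is that positivity comes from $\pi_C^* L_C$ on the principal component (where $\hat C \cong C$ and the map to $C$ is an isomorphism) and from $u^* P(\tilde L)$ on a bubble carrying a non-constant map to a fibre $\cong X$, while on a constant bubble both ample factors are trivial and $\omega_{\hat C/S}[z_1 + \ldots + z_n]$ has non-negative degree, vanishing exactly when the component has only two special points. Thus, as in Behrend--Manin, $\mathcal L$ is relatively ample on the stable model and $\hat C^{st}$ is obtained from $\hat C$ by contracting precisely the unstable components. The bundle $P$ lives over $C$ and is untouched, while $u$ is constant along each contracted component and hence descends to $u^{st} : \hat C^{st} \to P(X)$; the markings descend likewise. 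For families arising by forgetting a marking from a stable gauged map this collapses the unstable components and commutes with base change, and the general case is reduced to this one by adding markings locally.

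Next I would establish the identification of the composite
\[
\ovl{\MM}_n^{G,st}(C,X) \to \ovl{\MM}_{n-1}^{G}(C,X) \to \ovl{\MM}_{n-1}^{G,st}(C,X)
\]
with the universal curve. The composite sends a stable $n$-marked gauged map to its stabilization after forgetting $z_n$, and remembering the image of $z_n$ in $\hat C^{st}$ produces a morphism $\ovl{\MM}_n^{G,st}(C,X) \to \ovl{\CC}_{n-1}^G(C,X)$ landing in the part over the open substack $\ovl{\MM}_{n-1}^{G,st}(C,X)$. I would construct its inverse by the usual Knudsen case analysis on the position of the extra universal point $z'$: if $z'$ is a smooth point distinct from the remaining markings, add it back as $z_n$; if $z'$ coincides with a marking or lies at a node, sprout a genus zero bubble mapping constantly, with trivial bundle so that the gauged-map condition persists, and place $z_n$ on it. These assignments are mutually inverse on geometric points, and since both sides are families of nodal curves over $\ovl{\MM}_{n-1}^{G,st}(C,X)$, a morphism inducing a bijection of geometric points is an isomorphism of nodal curves. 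Properness is then immediate: the universal curve $\ovl{\CC}_{n-1}^G(C,X) \to \ovl{\MM}_{n-1}^G(C,X)$ is a family of nodal curves, hence proper (proper, flat, with proper fibres), and its restriction over the open substack $\ovl{\MM}_{n-1}^{G,st}(C,X)$ stays proper by base change.

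I expect the main obstacle to be the verification that the relative $\Proj$ construction behaves correctly in the gauged setting, namely that $\mathcal L$ is relatively ample and contracts exactly the unstable components, and that the formation of $\hat C^{st}$ commutes with base change. Unlike the plain stable-maps case there are two independent sources of positivity to balance, and one must check that the contracted locus is insensitive to the chosen twisting powers $a, b$ and that $u$, together with the untouched bundle $P$, descend compatibly to $\hat C^{st}$. The reduction of the base-change statement to families obtained by forgetting markings, following Behrend--Manin, is the delicate technical point, and the remainder of the argument is then a formal application of the Keel--Knudsen universal-curve formalism adapted to the extra datum of the $G$-bundle.
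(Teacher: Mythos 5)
Your proposal follows essentially the same route as the paper: the paper's proof forms $\hat{C}^{st} = \Proj \bigoplus_{m \ge 0} \pi_*(\omega_{\hat{C}/S}[z_1+\ldots+z_n] \otimes u^*(L_C \boxtimes P(L))^{\otimes 3})^{\otimes m}$, notes base-change compatibility for families obtained by forgetting a marking, and reduces the general case by adding markings locally, exactly as you do (your general twisting powers $a,b\gg 0$ versus the paper's fixed power $3$ is immaterial). Your Knudsen-style case analysis for the universal-curve identification is also the intended argument; the paper leaves that part implicit in this proof but carries out the same case analysis elsewhere (for scaled affine lines), so your write-up is correct and, if anything, slightly more complete.
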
 

\begin{proof} 
Let $\pi: \hat{C} \to S$ be a nodal curve with dualizing sheaf
$\omega_{\hat{C}/S}$, a morphism $u: \hat{C} \to C \times P(X)$, an
ample $G$-line bundle $L \to X$, and an ample line bundle $L_C \to C$.
The formation of the curve
$$\hat{C}^{st} = \Proj \bigoplus_{n \ge 0} \pi_*
(\omega_{\hat{C}/S}[z_1 + \ldots + z_n] \otimes u^*(L_C \boxtimes
P(L))^{\otimes 3})^{\otimes n} $$
commutes with base change, in the case that the family arises from a
stable family by forgetting a marking.  Then $u$ factors through
$\hat{C}^{st}$ and this gives the required family in this case.  The
general case reduces to this one by adding markings locally, see
Behrend-Manin \cite[Theorem 3.10]{bm:gw} and \cite[Proposition
  4.6]{bm:gw}.
\end{proof} 

\subsection{Mundet stability} 

Gauged maps corresponding to solutions of the vortex equations
correspond to maps satisfying a semistability condition introduced by
Mundet \cite{mund:corr}.  In this section we construct the stack
$\ovl{\M}_n^G(C,X)$ of Mundet-semistable gauged maps.  These are used
later to define gauged Gromov-Witten invariants.

First recall some terminology from the study of moduli spaces of
$G$-bundles, from Ramanathan \cite{ra:th}.  We restrict here to the
case that $G$ is connected.  A subgroup $R \subset G$ is {\em
  parabolic} if $G/R$ is complete.  Given a parabolic subgroup $R$,
the maximal reductive {\em Levi subgroup } $L \subset R$ is unique up
to conjugation by elements of $R$.  The parabolic $R$ admits a
decomposition $R = LU$ where $U$ is a maximal unipotent subgroup.  The
quotient map will be denoted $p: R \to R/U \cong L$ and the inclusion
$i : L \to G$.  A {\em parabolic reduction} of a bundle $P$ to $R$ is
a section $\sigma: {C} \to P/R$.

\begin{definition} {\rm (Associated Graded Bundle)}
Let $P$ be a principal $G$-bundle on a curve $C$. 
\begin{enumerate} 
\item 
{\rm (As an induced bundle)} Given a parabolic reduction $\sigma: C
\to P/R$, let $\sigma^* P $ denote the associated $R$ bundle, $p_*
\sigma^* P $ the associated $L$-bundle, and $j: R \to G$ the
inclusion.  The bundle $\Gr(P) := j_* p_* \sigma^* P$ is the {\em
  associated graded} $G$-bundle for $\sigma$.
\item {\rm (As a degeneration)} Let $\sigma: C \to P/R$ be parabolic
  reduction, $Z(L)$ denote the center of the Levi subgroup $L$,
  $\z(\lie{l})$ its Lie algebra, and $\lambda \in \z(\lie{l})$ a
  generic antidominant coweight (with respect to the roots of the Lie
  algebra $\p$ of $P$ restricted to $\z(\lie{l})$).  For $z \in
  \C^\times$, the induced family of automorphisms $\phi$ of $R$ by
  $z^{\lambda} = \exp(\ln(z) \lambda)$ by conjugation induces a family
  of bundles $ P_{\sigma,\lambda} := j_* ( (\sigma^* P \times \C)
  \times_\phi R)$ over $C \times \C$ with central fiber $\Gr(P) $.
\end{enumerate} 
\end{definition}  
\begin{example} {\rm (Associated graded bundles for vector bundles)} 
If $G = GL(n)$, then a parabolic reduction is equivalent to a
filtration of the associated vector bundle and $\Gr(P)$ is the frame
bundle of the associated graded vector bundle, see \cite{ra:th}.  The
degeneration in the second definition above is the one that deforms
the ``off diagonal'' parts of the transition maps of $P$ to zero.
\end{example} 

\begin{definition} (Associated Graded Section)   
Given a section $u: C \to P(X)$, define the {\em associated graded
  section} $\Gr(u): \hat{C} \to (\Gr(P))(X)$ associated to
$(\sigma,\lambda)$ as the unique stable limit $u_0$ of the sections
$u_z$ of $P_{\sigma,\lambda} |_{ C \times \{ z  \}} (X)$ given by acting on
$u$ by $z^\lambda$.
\end{definition} 

The Mundet stability condition is a collection of inequalities given
by integrals over the curve $C$, analogous to the definition of
stability of vector bundles by degrees of sub-bundles.  Suppose
$\lambda$ is a weight of $Z(L)$ and so defines a one-dimensional
representation $\C_\lambda$.  Via the trivialization $\z(\lie{l})
\cong (p_* \sigma^* P )( \z (\lie{l})) \subset (\Gr(P))(\g)$ the
element $\lambda$ defines an infinitesimal automorphism of $\Gr(P)$,
fixing the principal component $\Gr(u)_0$ of $\Gr(u)$.  The
polarization $\mO_X(1)$ defines a line bundle $P(\mO_X(1)) \to P(X)$
and the infinitesimal automorphism defined by $\lambda$ acts on the
fibers over $\Gr(u)_0$ with a weight $\mu_X( \Gr(u)_0,\lambda)$.

\begin{definition} (Mundet weight)
The {\em Mundet weight} of the pair $(\sigma,\lambda)$ as above is defined by
\begin{equation} \label{degree}
 \mu(\sigma,\lambda) 
= \int_{[C]} c_1( p_* \sigma^* P \times_L
 \C_{-\lambda}) + \mu_X(\Gr(u)_0,-\lambda) [\Vol_{C}] .
\end{equation}  
A gauged map $(P,u)$ is {\em Mundet stable} iff it satisfies the
\begin{equation} \label{mundetstable} 
{\rm (Weight \ Condition)} \ \ 
\mu(\sigma,\lambda) < 0 \end{equation}
for all $(\sigma,\lambda)$, {\em Mundet unstable} if there exists a
{\em de-stabilizing pair} $(\sigma,\lambda)$ violating
\eqref{mundetstable} with strict inequality, {\em Mundet semistable}
if it is not unstable, and {\em Mundet polystable} if it is semistable
but not stable and $(P,{u})$ is isomorphic to its associated graded
for any pair $(\sigma,\lambda)$ satisfying the above with equality.  A
gauged map is {\em semistable} if it is Mundet semistable with stable
section, and {\em stable} if it is semistable and has finite
automorphism group.
\end{definition} 

\begin{remark}
\begin{enumerate} 
\item {\rm (Connection with stability of bundles)} In the case that
  $X$ is trivial and 
$G$ is semisimple, Mundet stability is the same as Ramanathan
  stability of principal $G$-bundles \cite{ra:th}.
\item {\rm (Definition in terms of the moment map)} If $P(K)$ is a
  smooth principal $K$-bundle so that $P = P(K) \times_K G$ is a
  smooth principal $G$-bundle, then via the correspondence between
  complex structures on $P(G)$ and connections on $P$ we may view
  $\Gr(P)$ as a limiting connection on $P(K)$, and the section
  $\Gr(u)$ as a stable section of $P(K) \times_K X$.  Then the weight
  $\mu_X(\Gr(u)_0,\lambda)$ can be expressed in terms of the moment
  map as
$$ \mu_X(\Gr(u)_0,\lambda) = ( (P(K))(\Phi) \circ \Gr({u})_0, \lambda
  ) ,$$
by the usual correspondence between moment maps and linearizations of
actions.
\item {\rm (Dependence on choices)} The stability condition depends on
  the cohomology class $[\Vol_C] \in H^2(C)$, in addition to the
  metric on $\k$ and the choice of moment map (or polarization) on
  $X$.  Rescaling the metric on $\k$ is equivalent to rescaling
  $[\Vol_C]$ or to rescaling the moment map.  Allowing a varying curve
  $C$ equipped with a cohomology class $[\Vol_C]$ leads to various
  properness issues, see Mundet-Tian \cite{mun:co}.
\item 
{\rm (Comparison with Mundet's definition)} We have chosen the
definition to generalize that of Ramanathan \cite{ra:th} for principal
$G$-bundles.  Mundet's definition in \cite[Section 4]{mund:corr} is
slightly different: For a parabolic reduction $\sigma$ and possibly
irrational antidominant $\lambda \in \z$, identified with an
infinitesimal gauge transformation,
$$ \mu(\sigma,\lambda) = \inf_t \int_{C} ( F_{ e^{i t \lambda} A } , -\lambda
) + ( (e^{i t \lambda} u)^* P(\Phi), - \lambda) \Vol_C .$$
Then $\mu(\sigma,\lambda)$ agrees with the previous definition in the
case that $\lambda$ is a coweight, since in this case the infimum
equals the limit as $t \to -\infty$, the right-hand-side of
\eqref{degree}.  To see that the two definitions are the same, it
suffices to check that if \eqref{mundetstable} is violated by some
irrational $\lambda$ it is also violated for rational $\lambda$.  For
$\lambda'$ sufficiently close to $\lambda$ and defining the same
parabolic reduction, we have
\begin{equation} \label{sameA} \lim_{t \to \infty} e^{i t \lambda} A = 
\lim_{t \to \infty} e^{i t \lambda'} A =: A_\infty, \quad
\lim_{t \to \infty} F_{e^{i t \lambda} A} = \lim_{t \to \infty}
F_{e^{i t \lambda'} A} = F_{A_\infty} \end{equation}
uniformly in all derivatives.  Furthermore, by Gromov compactness
$e^{i t \lambda}u $ Gromov converges to some limit $u_\infty: \hat{C}
\to P(X)$ as $t \to \infty$, with principal component $u_{0,\infty}:
\hat{C} \to P(X)$ and
$$ \lim_{t \to \infty} \int_C ( (e^{i t \lambda} u)^* P(\Phi), -
\lambda ) \Vol_C \to \int_C ( u_{0,\infty}^* P(\Phi), - \lambda )
\Vol_C .$$
Let $\lambda' \in \k(P)_{(A_\infty,u_\infty)}$ commute with $\lambda$.
It follows from the local slice theorem for holomorphic actions that
if in addition $\lambda'$ is sufficiently close to $\lambda$ then for
$z$ not in the bubbling set
\begin{equation} \label{sameu}
 \lim_{t \to \infty} e^{i t \lambda'} u(z) = u_{0,\infty}(z) = \lim_{t
   \to  \infty} e^{i t \lambda'} u(z) .\end{equation}
Indeed after passing to a maximal torus containing both $\lambda
,\lambda'$ the equation \eqref{sameu} holds iff the weights for the
action at $u_{0,\infty}(z)$ have the same sign on $\lambda$ and
$\lambda'$.  Since rational Lie algebra vectors are dense in the Lie
algebra of any closed subgroup, we may find $\lambda'$ rational
satisfying \eqref{sameA},\eqref{sameu} and so violating semistability
if $\lambda$ does.  Mundet also allows a correction coming from the
center of $\g$ on the right-hand-side of \eqref{mundetstable}, so that
in the case $X$ trivial and $G = GL(n)$ the definition of
semistability agrees with that for vector bundles.
\end{enumerate}
\end{remark} 

Theorem \ref{corr} below gives the equivalence of the stability
condition with the existence of a solution to the vortex equations.
We denote by $\G(P)$ the group of {\em complex gauge transformations}
of $P$.  There is a one-to-one correspondence between the space
$\J(P(G))$ of complex structures on $P(G)$ and connections $\A(P)$ on
$P$.  The identification $\A(P) \to \J(P(G))$ is equivariant for the
action of $\K(P)$, in the sense that $J_{kA} = Dk \circ J_A \circ
Dk^{-1}$.  Thus, the identification defines an extension of the
$\K(P)$ action on $\A(P)$ to an action of $\G(P)$.  The group $\G(P)$
acts on $\H(P,X)$ by composition on the second factor.  A pair $(A,u)$
is {\em simple} if no element of $\G(P)$ semisimple at each point in
$C$ leaves $(A,u)$ fixed, see \cite[Definition 2.17]{mund:corr}.

\begin{theorem} [Mundet's Hitchin-Kobayashi correspondence \cite{mund:corr}]
  \label{corr} Let $P \to {C}$ be a principal $K$-bundle.   A simple pair $(A,u)
\in \H(P,X)$ defines a Mundet-stable gauged map if and only if there
exists a complex gauge transformation $g \in \G(P)$ such that $g(A,u)$
is a vortex.
\end{theorem}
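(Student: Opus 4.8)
The plan is to read this as an infinite-dimensional Kempf--Ness theorem for the action of the unitary gauge group $\K(P)$ on the space of holomorphic pairs $\H(P,X)$, whose complexification is $\G(P)$ and whose moment map is, schematically, the vortex operator $(A,u) \mapsto F_A + (u^* P(\Phi))\,\Vol_C$. The vortex equation is the vanishing of this moment map, so the assertion is that the $\G(P)$-orbit of a simple pair meets the zero level set exactly when the pair is Mundet stable, with the Mundet weight $\mu(\sigma,\lambda)$ playing the role of the Hilbert--Mumford numerical invariant. I treat the two implications separately, following Mundet \cite{mund:corr}.

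For the direction that a vortex is stable, I may assume $(A,u)$ is itself a vortex, since Mundet stability is $\G(P)$-invariant. Fix a destabilizing candidate $(\sigma,\lambda)$ and use the formula $\mu(\sigma,\lambda) = \inf_t \int_C (F_{e^{it\lambda}A}, -\lambda) + ((e^{it\lambda}u)^* P(\Phi), -\lambda)\,\Vol_C$ from the remark above. The integrand at $t=0$ is the pairing of the moment map with $-\lambda$, which vanishes by the vortex equation, so the function $I(t) := \int_C(\cdots)$ satisfies $I(0)=0$; moreover $I$ is monotone because $I'(t)$ is the squared $L^2$-norm of the infinitesimal action of $\lambda$. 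Hence the infimum is attained in the limit and is strictly negative whenever $\lambda$ acts nontrivially, giving $\mu(\sigma,\lambda) < 0$.

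For the converse, suppose $(A,u)$ is Mundet stable. I introduce the Mundet functional $M$ on $\G(P)/\K(P)$ whose first variation is the moment map, so that $M$ is convex along the geodesics $t \mapsto e^{it\xi}$ and its critical points are exactly the vortices in the orbit; strict convexity transverse to the stabilizer is guaranteed by simplicity of $(A,u)$. The vortex is then produced by minimizing $M$. The crux is that Mundet stability is equivalent to coercivity of $M$: were $M$ not proper, a minimizing sequence $g_\nu$ would diverge in $\G(P)/\K(P)$, and its renormalized asymptotic direction would converge to an infinitesimal gauge transformation determining a parabolic reduction $\sigma$ together with an antidominant coweight $\lambda$, along which the asymptotic slope of $M$ equals $-\mu(\sigma,\lambda)$; boundedness below would then force $\mu(\sigma,\lambda) \geq 0$, contradicting stability. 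Thus stability confines the minimizing sequence to a bounded region, and a minimizer --- hence a vortex --- is extracted.

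The main obstacle is the analytic substance of the converse. One must prove $C^\infty$ a priori bounds along the minimizing sequence, combining Uhlenbeck compactness for the connections $g_\nu A$ with Gromov compactness for the sections $g_\nu u$, in order to extract a smooth limit; and one must carry out the dimensional-reduction analysis that matches the degeneration of a divergent sequence $g_\nu$ to an honest parabolic pair $(\sigma,\lambda)$ and identifies its asymptotic slope with the Mundet weight \eqref{degree}. This identification of the analytic blow-up direction with the algebraic numerical invariant is the true heart of the correspondence and is where the bulk of the work in \cite{mund:corr} lies.
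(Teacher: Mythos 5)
Your outline is essentially the argument the paper itself sketches in the remark following Theorem \ref{corr}: the theorem is quoted from Mundet \cite{mund:corr}, and the paper's discussion likewise reduces it to convexity of the integrated moment-map functional $\cI(P)$ on $\G(P)/\K(P)$, the equivalence of (semi)stability with boundedness below/attainment of the minimum, and the Uhlenbeck--Yau-type extraction of a destabilizing direction $\xi$ from a divergent minimizing sequence. Both you and the paper defer the substantive analysis (a priori estimates, identification of the asymptotic slope with the Mundet weight \eqref{degree}) to \cite{mund:corr}, so your proposal is a correct sketch taking the same route.
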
  

\begin{remark} {\rm (Analytic Mundet stability)}  
 Mundet's proof depends on the convexity of the functional $\cI(P)$
 (depending on the choice of $(A,u)$) obtained by integrating the one
 form determined by the moment map
\begin{equation} \label{mundetfun}
 \cI(P) : \G(P)/\K(P) \to \R, \quad [ \exp( i t \xi)] \mapsto
 \int_{0}^1 \lan F_{ \exp( i t \xi) (A,u)}, \xi \ran \d t
 .\end{equation}
If $(A,u)$ is complex gauge equivalent to a symplectic vortex, then
$\cI(P)$ is bounded from below.  On the other hand, if $\cI(P)$ is not
bounded from below then Mundet (using previous results of
Uhlenbeck-Yau \cite{uy:ex1}) constructs a direction $\xi \in \k(P)$ in
which $\lim_{t \to \infty} \exp( -i t \xi)(A,u)$ exists and
$$ \lim_{t \to \infty } ( F_{ \exp( -i t \xi) (A,u)} , \xi ) \ge 0 $$
and shows that the corresponding parabolic reduction violates the
stability condition.  A pair $(A,u)$ is Mundet unstable resp.
semistable resp. stable iff the Mundet functional $\cI(P)$ is not
bounded from below resp. bounded from below resp. attains its minimum
in $\G(P)/\K(P)$.
\end{remark}  

The algebraic moduli spaces arising from the Mundet semistability
condition are investigated in Schmitt \cite{schmitt:git}.  Let
$\M^G_n(C,X) \subset \MM^{G,st}_n(C,X)$ denote the category of
$n$-marked gauged maps to $X$ with irreducible domain that are Mundet
semistable.  We wish to show that $\M^G_n(C,X)$ is an Artin stack, for
which it suffices to show that the semistability condition is open.
Recall from Section \ref{homstacks} \eqref{homstoquotients} Schmitt's
compactification of the moduli space of gauged maps by {\em projective
  bumps}.  Schmitt \cite{schmitt:git} defines a semistability
condition for projective bumps which generalizes Mundet semistability
for gauged maps.  Let $\ovl{\MM}^{G,\quot}(C,X,F)$ denote the moduli
stack of projective bumps from \ref{homstacks} \eqref{bumps}, let
$\ovl{\M}^{G,\quot}(C,X,F)$ denote the subcategory of
$\ovl{\MM}^{G,\quot}(C,X,F)$ consisting of families of Mundet
semistable bumps. For $d\in H_2^G(X,\Z)$ and let
$\ovl{\M}^{G,\quot}(C,X,F,d)$ denote the moduli substack of
$\ovl{\M}^{G,\quot}(C,X,F)$ of semistable bumps with class $d$.  The
semistable locus $\ovl{\M}^{G,\quot}(C,X,F)$ is independent of $F$ for
$F$ of sufficiently large rank, by \cite[Proposition
  2.7.2.9]{schmitt:git}, and will be denoted $\ovl{\M}^{G,\quot}(C,X)$.
Recall that $X$ is equipped with the equivariant class
$[\omega_{X,G}]\in H^2_G(X)$.

\begin{theorem} \label{qproper}  Let $X$ be a smooth polarized projective $G$-variety.  
 The moduli stack of projective bumps $\ovl{\MM}^{G,\quot}(C,X,F)$
 resp. Mundet semistable projective bumps $\ovl{\M}^{G,\quot}(C,X,F)$
 is an Artin stack locally of finite type, containing $\M^G(C,X)$ as
 an open substack.  More precisely each $\ovl{\MM}^{G,\quot}(C,X,F,d)$
 resp. $\ovl{\M}^{G,\quot}(C,X,F,d)$ has a presentation as a quotient
 of a closed subscheme of a quot scheme resp. semistable locus in a
 closed subscheme of a quot scheme.  If stable=semistable for
 projective bumps then for each constant $c > 0$, the union of
 components $\ovl{\M}^{G,\quot}(C,X,F,d)$ with $(d, [\omega_{X,G}]) <
 c$ is a proper Deligne-Mumford stack with projective coarse moduli
 space.
\end{theorem}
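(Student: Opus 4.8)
The plan is to deduce all assertions from Schmitt's geometric invariant theory construction \cite{schmitt:git}, reorganized in the bump language of Example \ref{homstacks} \eqref{bumps}. Fix a class $d \in H_2^G(X,\Z)$; it determines the type $(\deg E, \deg L)$ of the associated bumps. After twisting $F$ by a sufficiently positive power of the polarization (so that the relevant sheaves $E^\dual$ are globally generated and $F^2 = F \oplus F$ surjects onto $E^\dual \times L^\dual$ compatibly with the factoring condition of Example \ref{homstacks} \eqref{bumps}), these data define a locally closed subscheme of the quot scheme $\Quot_{F^2/X/S}$ of Example \ref{homstacks} \eqref{bumps}. By Grothendieck's construction (Example \ref{stacksex} \eqref{quotstack1}) this quot scheme is projective over $S$, so the closure $\MM^{G,\quot,\fr}(C,X,F,d)$ of the bump locus is a closed subscheme of a projective scheme, in particular of finite type. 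The group $\Aut(F) = GL(\rk F)$ acts, and $\ovl{\MM}^{G,\quot}(C,X,F,d) = [\MM^{G,\quot,\fr}(C,X,F,d)/\Aut(F)]$ is therefore an Artin stack of finite type; its disjoint union over $d$ is locally of finite type, giving the first presentation.

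For the semistable version, I would invoke Schmitt's semistability condition for bumps, which by construction extends Mundet semistability for gauged maps. Standard GIT shows that the semistable locus $\MM^{G,\quot,\fr,\sss}(C,X,F,d)$ is an $\Aut(F)$-invariant open subscheme of the above quot scheme and that a GIT quotient exists; hence $\ovl{\M}^{G,\quot}(C,X,F,d)$ is the open substack presented as the quotient stack of this semistable locus, a semistable locus in a closed subscheme of a quot scheme as claimed. By \cite[Proposition 2.7.2.9]{schmitt:git} this semistable locus stabilizes for $\rk F$ large, justifying the $F$-independent notation $\ovl{\M}^{G,\quot}(C,X)$. That $\M^G(C,X)$ embeds as an open substack is then immediate: the condition that the domain $\hat{C}$ be irreducible (that the bubbling locus be empty) is open, stability of the section is open, and Mundet semistability is open, so their intersection is open.

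For the final, properness assertion I would argue in two steps: boundedness and the valuative criterion. Boundedness of Mundet-semistable bumps with $(d,[\omega_{X,G}]) < c$ follows from Schmitt \cite{schmitt:git}, so the whole union sits inside finitely many of the fixed-type quot schemes of the previous paragraphs, each of finite type. When stable $=$ semistable, Schmitt's GIT quotient of the semistable locus is a projective geometric quotient; this projective scheme is then the coarse moduli space of the quotient stack, which is proper by Proposition \ref{coarseproper}. Stability of every object forces the automorphism groups to be finite, so the stack is Deligne-Mumford, completing the argument.

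The step I expect to be the main obstacle is the properness input, namely the semistable-reduction theorem underlying the valuative criterion: given a family of Mundet-semistable bumps over a punctured disc one must produce a semistable limit, unique in the stable case. In the GIT framework this is Schmitt's Langton--Seshadri-type argument, and I would cite it directly; alternatively, one could argue analytically via Mundet's Hitchin--Kobayashi correspondence (Theorem \ref{corr}) together with Uhlenbeck and Gromov compactness to produce a limiting vortex and hence a semistable limit. A secondary, more bookkeeping difficulty is verifying that Schmitt's GIT linearization reproduces exactly the Mundet weight \eqref{degree}, so that the GIT-semistable locus coincides with the Mundet-semistable one; this matching is the technical heart of \cite{schmitt:git} and I would take it as given.
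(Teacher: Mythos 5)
Your proposal is correct and follows essentially the same route as the paper: both present $\ovl{\MM}^{G,\quot}(C,X,F,d)$ as the quotient by $\Aut(F)$ of the closure of a locus of double quotients $F^2 \to E^\dual \times L^\dual$ in a quot scheme, identify the Mundet-semistable locus with Schmitt's GIT-semistable locus via his linearization, and obtain properness from projectivity of the GIT quotient together with boundedness of the contributing classes $d$. The only point where the paper is more explicit than you are is the reason for boundedness: $(d,[\omega_{X,G}])$ is identified with the degree of the line bundle $L$ in Schmitt's construction, so only finitely many classes with $(d,[\omega_{X,G}])<c$ give a nonempty moduli stack.
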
 

\begin{proof}  
 Schmitt \cite{schmitt:git} avoids the language of stacks, but the
 construction is the same: $\ovl{\MM}^{G,\quot}(C,X,F)$ is the quotient
 of a rigidified moduli space $\ovl{\MM}^{G,\quot,\fr}(C,X,F)$, and
 $\ovl{\M}^{G,\quot}(C,X,F)$ is the quotient of the git semistable
 locus \cite[Theorem 2.7.1.4]{schmitt:git}.  The necessary local quot
 scheme is constructed as follows, in the case $G = GL(n)$.  Let $E
 \to C$ be a vector bundle and $u: E \to L$ a quotient corresponding
 to a section of $\P(E^\dual)$.  After suitable twisting, we may
 assume that $E$ is generated by its global sections, in which case
 $E$ is a quotient of a trivial vector bundle $F$.  We then obtain a
 double quotient $F \to E \to L.$ Such a double quotient can be
 considered as a quotient $F^2 \to E \times L$. Let
 $\MM^{G,\fr,\quot}(C,X,F)$ denote the open subscheme of the quot
 scheme $\Quot_{F^2/C}$ consisting of such quotients.  Let
 $\MM^{G,\quot}(C,X,F) = \MM^{G,\fr,\quot}(C,X,F)/\Aut(F)$ be the
 quotient stack by the action of the general linear group $\Aut(F)$.
 Let $\ovl{\MM}^{G,\quot}(C,X,F)$ its closure in
 $\Quot_{F^2/C}/\Aut(F)$.  Schmitt \cite[Section 2.7]{schmitt:git}
 shows that a suitable canonical polarization on $\Quot_{F^2/C}$ gives
 a semistability condition which reproduces Mundet semistability.  The
 git construction shows that each substack
 $\ovl{\M}^{G,\quot}(C,X,F,d)$ is proper.  On the other hand, the set
 of classes $d$ such that $(d, [\omega_{X,G}]) < c$ and
 $\ovl{\M}^{G,\quot}(C,X,F,d)$ is non-empty, is finite, since, as one
 may check, $(d, [\omega_{X,G}])$ is the degree of the line bundle $L$
 in Schmitt's construction.
\end{proof} 

On the other hand, there is a natural Kontsevich-style moduli space
which allows bubbling in the fibers satisfying a stability condition.
Denote by $\ovl{\M}^{\pre,G}_n(C,X)$ the category of $n$-marked nodal
gauged maps (that is, not necessarily stable sections) that are Mundet
semi-stable.  Let $\ovl{\M}^{G}_n(C,X)$ denote the subcategory of
$\ovl{\M}^{\pre,G}_n(C,X)$ consisting of Mundet semistable gauged maps
that are semistable, that is, have sections that are stable maps, and
$\M^G_n(C,X)$ the subcategory where $\hat{C} \cong C$.  The
relationship between the Kontsevich-style compactification
$\ovl{\M}^G(C,X)$ and the Grothendieck-style compactification
$\ovl{\M}^{G,\quot}(C,X)$ is given by relative version of Givental's
collapsing morphism \cite[p. 646]{gi:eq}:

\begin{proposition} \label{givmaincor} 
Let $X$ be a smooth polarized projective $G$-variety.  Then $
\ovl{\M}^G_n(C,X)$ is an Artin stack equipped with a proper
Deligne-Mumford morphism to $ \ovl{\M}^{G,\quot}(C,X)$.
\end{proposition}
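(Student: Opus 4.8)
The plan is to prove the three assertions in turn: that $\ovl{\M}^G_n(C,X)$ is an Artin stack, that Givental collapsing defines a morphism to $\ovl{\M}^{G,\quot}(C,X)$, and that this morphism is proper with finite relative automorphism groups.

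For the Artin property I would argue as follows. By Theorem \ref{artin} the stack $\ovl{\MM}_n^{G,st}(C,X)$ of gauged maps with stable section is Artin, and $\ovl{\M}^G_n(C,X)$ is the subcategory cut out by imposing Mundet semistability; so it suffices to show that Mundet semistability is an open condition. On the irreducible-domain locus this is precisely the assertion in Theorem \ref{qproper} that $\M^G(C,X)$ is open in the bump stack $\ovl{\M}^{G,\quot}(C,X,F)$, since Schmitt's git semistability --- which by that theorem reproduces Mundet semistability --- is open by construction. Restricting an arbitrary nodal gauged map to its principal component and comparing with the git semistability of the associated bump would extend the openness to the bubbled locus. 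Hence $\ovl{\M}^G_n(C,X)$ is an open substack of $\ovl{\MM}_n^{G,st}(C,X)$, and in particular Artin.

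Next I would construct the collapsing morphism, following Givental \cite[p.~646]{gi:eq} in relative form. Given a family $(\hat{C},P,u,\ul{z})$ of Mundet semistable stable gauged maps over $S$, the bundle $P$ is already defined on $C \times S$, and the components of $\hat{C}$ contracted to points of $C$ are collapsed by the relative $\Proj$ construction of Lemma \ref{collapse1}. The section $u$, which over the principal component $C \times S$ is defined away from the images of these fiber bubbles, then extends across those points as a degenerate section: for $G = GL(r)$ and $X = \P^{r-1}$ it becomes a line subsheaf $L \hookrightarrow E$, equivalently a surjection $E^\vee \to L^\vee$ as in Example \ref{homstacks}\eqref{bumps}, with the collapsed bubbles contributing the torsion of the quotient, hence a point of $\Quot_{F^2/C}/\Aut(F)$. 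For general reductive $G$ and projective $G$-variety $X$ one fixes an embedding $X \hookrightarrow \P(V)$ via a representation $G \to GL(V)$ and argues identically. The Mundet semistability of the gauged map matches that of the resulting bump by Theorem \ref{qproper}, so the image lies in $\ovl{\M}^{G,\quot}(C,X)$. Because the $\Proj$ construction commutes with base change and saturation of a subsheaf is compatible with flat pullback, this assignment would be a morphism of stacks $\ovl{\M}^G_n(C,X) \to \ovl{\M}^{G,\quot}(C,X)$.

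Finally, for properness and the Deligne-Mumford property I would proceed as follows. An automorphism of a stable gauged map inducing the identity on the associated bump fixes the principal component and can only permute or rescale the collapsed fiber bubbles; stability forces each such bubble to carry enough special points that only finitely many such automorphisms exist, so the morphism is representable and Deligne-Mumford. For properness I would verify the valuative criterion over a discrete valuation ring $R$ with fraction field $K$: given a family of bumps over $\Spec R$ and a lift to stable gauged maps over $\Spec K$, existence of an extension over $\Spec R$ is the stable reduction property for gauged maps (equivalently Gromov compactness), while uniqueness up to the finite ambiguity above follows because the bump over the closed point determines the base-point data, and hence the combinatorial type of the limiting bubble tree. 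Since each $\ovl{\M}^{G,\quot}(C,X,F,d)$ is proper when stable $=$ semistable by Theorem \ref{qproper}, the fibers are proper Deligne-Mumford stacks.

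The main obstacle will be the family construction in the second step: one must show that the collapsed section extends across the entire fiber-bubbling locus to a \emph{flat} family of bumps carrying the correct torsion, and that this extension is compatible with arbitrary base change --- not merely with the forgetful families used to set up the $\Proj$ construction in Lemma \ref{collapse1}. Schmitt's realization of bumps inside a quot scheme (Example \ref{homstacks}\eqref{bumps}) is what makes this feasible, since it supplies a target in which these degenerate, base-point-bearing sections are honest points.
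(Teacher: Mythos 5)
Your overall strategy --- relate the Kontsevich-style compactification to Schmitt's quot-scheme compactification by a collapsing morphism, deduce the Artin property from openness of semistability on the quot-scheme side, and get properness and the Deligne--Mumford property from the collapsing morphism --- is the same as the paper's. But the step you yourself flag as ``the main obstacle'' is in fact the entire content of the proposition, and your proposal does not resolve it. Constructing the collapsed section as a saturated subsheaf with torsion concentrated at the fiber-bubbling locus, showing that this produces a \emph{flat} family of quotients compatible with arbitrary base change, and showing that the resulting morphism is \emph{proper}, is precisely Givental's main lemma \cite[p.~646]{gi:eq} in the relative form established by Popa--Roth \cite{po:stable}; it is a published theorem, not something that follows from Schmitt's realization of bumps inside $\Quot_{F^2/C}$ merely ``supplying a target.'' The paper's proof consists essentially of setting up the situation so that this theorem applies: one presents $\Hom(C,BG)$ locally as $\MM^{\fr,\quot}(C,F)/\Aut(F)$, applies the relative Givental lemma to stable maps into the universal associated bundle $\UU^{\fr,\quot}(C,F)\times_G X$ over that presentation to obtain a proper Deligne--Mumford morphism $g$, and then descends by the $\Aut(F)$-action. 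Your fiberwise construction (``the bundle $P$ is already defined on $C\times S$'') does not address how the bundle varies over the moduli stack, which is what the passage through the framed quot scheme and the $\Aut(F)$-quotient accomplishes.

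Two further points. First, your properness argument via the valuative criterion invokes ``stable reduction for gauged maps (equivalently Gromov compactness)'' to produce the extension over $\Spec R$; but relative properness of the collapsing morphism over the quot-scheme compactification is exactly what is being asserted, so this is circular --- in the paper properness is inherited from the properness of $g$ (Popa--Roth) together with the properness of the forgetful morphism of Lemma \ref{collapse1}. Second, your openness argument for Mundet semistability on the bubbled locus (``restricting to the principal component and comparing with the git semistability of the associated bump'') presupposes the morphism to the bump stack that you only construct afterwards; the paper avoids this by building the morphism at the level of the prestable stacks $\ovl{\MM}$ first and then defining $\ovl{\M}^G_0(C,X)$ as the inverse image of the open substack $\ovl{\M}^{G,\quot}(C,X)$, with $\ovl{\M}^G_n(C,X)$ obtained as the inverse image under the iterated forgetful morphism. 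Reordering your argument along these lines, and replacing the hand-made family construction by an appeal to \cite[Theorem, p.~4]{po:stable}, would close the gap.
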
  

\begin{proof}    By Givental's main lemma \cite[p. 646]{gi:eq}, see 
\cite[Lemma 2.6]{lly:mp1}, \cite[Section 8]{feigin:quasi},
\cite{bini:giv}, \cite{po:stable} for any smooth projective variety
$X$ embedded in a projective space $\P(V)$, there exists a proper
morphism $ \ovl{\M}_{g,0}(C \times X) \to \Quot_{F/C}$ where
$\Quot_{F/C}$ is the quot scheme of the trivial bundle $F = C \times
V^\dual$ compactifying $\Hom(C,\P(V))$.  We apply this as follows,
continuing Example \ref{stacksofschemes} \eqref{bumps}: Consider the
forgetful morphism $\ovl{\M}^G_0(C,X) \to \Hom(C,BG)$.  As in
Narasimhan-Seshadri \cite{ns:st} for $G = GL(n,\C)$ or Ramanathan
\cite{ra:th}, Sorger \cite{sorg:lec} in general, the stack
$\Hom(C,BG)$ admits a local presentation as a quotient
$\MM^{\fr,\quot}(C,F)/\Aut(F)$ where $\MM^{\fr,\quot}(C,F)$ is a
quasiprojective scheme of bundles whose associated vector bundle is
equipped with a presentation as a quotient of $F$.  The space
$\MM^{\fr,\quot}(C,F)$ has a universal $G$-bundle
$\UU^{\fr,\quot}(C,F) \to C \times \MM^{\fr,\quot}(C,F)$ equipped with
a $G$-equivariant $\Aut(F)$-action.  Let $\ti{d} \in
H_2({\UU}^{\quot}(C,F) \times_G X)$ be the class corresponding to $d$
that is, whose push-forward under ${\UU}^{\fr,\quot}(C,F) \times_G X
\to \MM^{\quot}(C,F)$ is zero and whose fiber class is determined by
$d$.  The stack $\MM^G(C,X,F,d)$ is a category of bundles with
section, and so is isomorphic to the quotient of the rigidified moduli
space $\M_{g,0}(\UU^{\fr,\quot}(C,F) \times_G X,\ti{d})$ by the action
of $\Aut(F)$.  Let $\ovl{\MM}^{\fr,\quot}(C,\ovl{\UU}^{\quot}(C,F)
\times_G X,\ti{d})$ be the subscheme of $\Quot_{F^2/C}$ compactifying
morphisms $C \to \ovl{\UU}^{\quot}(C,F) \times_G X$ of class $\ti{d}$.
By the relative version of Givental's lemma \cite[Theorem,
  p.4]{po:stable} there exists a proper morphism $g:
\ovl{\M}_{g,0}(\ovl{\UU}^{\quot}(C,F) \times_G X,\ti{d}) \to
\ovl{\MM}^{\fr,\quot}(C,\ovl{\UU}^{\quot}(C,F) \times_G X,\ti{d})$
mapping each stable map to the corresponding quotient.  The morphism
$\ovl{\MM}^G(C,X,F,d) \to \ovl{\MM}^{G,\quot}(C,X,F,d)$ is the quotient
of $g$ by the action of $\Aut(F)$.  Since $g$ is proper and of
Deligne-Mumford type, so is the quotient.  After restricting to the
semistable locus, we may assume that $F$ is sufficiently large so that
every bundle occurs as a quotient of $F$.  Then $ \ovl{\M}^G_0(C,X)$ is
the inverse image of the open substack $\ovl{\M}^{G,\quot}(C,X)$ and so
also an Artin stack.  Furthermore $\ovl{\M}^G_n(C,X)$ is the inverse
image of $\ovl{\M}^G_0(C,X)$ under the forgetful morphism obtained by
iterating Lemma \ref{collapse1}, and so an Artin stack.  Since the
forgetful morphism and $g$ are both Deligne-Mumford and proper, the
claim follows.
\end{proof} 

\begin{corollary} 
Let $X$ be a smooth polarized projective $G$-variety.  Suppose that
every Mundet semistable gauged map is stable.  For each constant $c >
0$, the union of components $\ovl{\M}^G_n(C,X,d)$ with $(d,
[\omega_{X,G}]) < c$ is a proper Deligne-Mumford stack.
\end{corollary}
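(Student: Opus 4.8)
The plan is to deduce the statement by combining Proposition \ref{givmaincor} with Theorem \ref{qproper}, using that both properness and the Deligne--Mumford property are preserved under composition with a proper morphism of Deligne--Mumford type. By Proposition \ref{givmaincor} there is a proper morphism of Deligne--Mumford type
$$ \Phi: \ovl{\M}^G_n(C,X) \to \ovl{\M}^{G,\quot}(C,X) $$
to Schmitt's Grothendieck-style compactification. First I would reduce to the unmarked case $n = 0$: the forgetful morphisms obtained by iterating Lemma \ref{collapse1} are proper, so it suffices to treat $\ovl{\M}^G_0(C,X,d)$ and then pull back along the proper forgetful maps that reinstate the $n$ markings.

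Next I would set up the degree bookkeeping. The morphism $\Phi$ respects the decomposition into components of fixed class, sending the component of class $d$ to the component of the corresponding class, and under this correspondence the pairing $(d, [\omega_{X,G}])$ is preserved; indeed, as recorded in the proof of Theorem \ref{qproper}, $(d, [\omega_{X,G}])$ is the degree of the line bundle $L$ appearing in Schmitt's construction. Hence for each $c > 0$ the union $U_c^{\quot}$ of target components $\ovl{\M}^{G,\quot}(C,X,F,d)$ with $(d, [\omega_{X,G}]) < c$ is exactly the image of the union $U_c$ of domain components $\ovl{\M}^G_n(C,X,d)$ with $(d, [\omega_{X,G}]) < c$, and $U_c = \Phi^{-1}(U_c^{\quot})$ on the relevant locus.

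Now the hypothesis enters. The assumption that every Mundet semistable gauged map is stable gives the condition stable$=$semistable for projective bumps required in Theorem \ref{qproper}, so that theorem shows $U_c^{\quot}$ is a proper Deligne--Mumford stack with projective coarse moduli space. Since $\Phi$ is proper, the restriction $\Phi : U_c \to U_c^{\quot}$ is proper, and the composition $U_c \to U_c^{\quot} \to \Spec \C$ of proper morphisms is proper; thus $U_c$ is proper. Because $\Phi$ is of Deligne--Mumford type and $U_c^{\quot}$ is Deligne--Mumford, the total space $U_c$ is Deligne--Mumford as well, since the automorphism group of any object is a finite extension of a finite group and hence finite. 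Reinstating the $n$ markings via the proper forgetful morphisms then yields that the union of components $\ovl{\M}^G_n(C,X,d)$ with $(d, [\omega_{X,G}]) < c$ is a proper Deligne--Mumford stack.

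The main point requiring care is the translation of the hypothesis ``every Mundet semistable gauged map is stable'' into the condition stable$=$semistable for Schmitt's projective bumps, so that the strong (properness and Deligne--Mumford) conclusion of Theorem \ref{qproper} is available rather than merely the unconditional git-properness of each individual $\ovl{\M}^{G,\quot}(C,X,F,d)$. This amounts to checking that no strictly semistable bump on the boundary acquires positive-dimensional automorphisms once all honest gauged maps are stable, together with the finiteness (from the same theorem) of the set of classes $d$ with $(d, [\omega_{X,G}]) < c$ and nonempty moduli, which guarantees that $U_c$ is a \emph{finite} union of components.
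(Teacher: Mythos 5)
Your proposal is correct and follows essentially the same route as the paper: the corollary is obtained by composing the proper Deligne--Mumford morphism $\ovl{\M}^G_n(C,X,d) \to \ovl{\M}^{G,\quot}(C,X,d)$ of Proposition \ref{givmaincor} with the properness of $\ovl{\M}^{G,\quot}(C,X,d) \to \pt$ from Theorem \ref{qproper}, summed over the finitely many classes $d$ with $(d,[\omega_{X,G}]) < c$. The extra bookkeeping you supply (reduction to $n=0$, matching of degrees, and the translation of the stability hypothesis into stable$=$semistable for bumps) only fleshes out steps the paper leaves implicit.
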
 

\begin{proof}  By Theorem \ref{qproper} and Proposition \ref{givmaincor},
the morphisms $\ovl{\M}^{G,\quot}(C,X,d) \to \pt $ and
$\ovl{\M}^G_n(C,X,d) \to \ovl{\M}^{G,\quot}(C,X,d) $ are proper and
Deligne-Mumford, hence so is their composition, and similarly for the
union of components satisfying the bound in the Corollary.
\end{proof}  

There is another approach to the properness result above which uses
symplectic geometry rather than the git constructions in Schmitt
\cite{schmitt:git}.  Let $K$ be a maximal compact subgroup of $G$.

\begin{theorem}\label{coarse} 
Let $X$ be a smooth polarized projective $G$-variety or a $G$-vector
space with a proper moment map.  Suppose that every semistable gauged
map is stable.  The map assigning to any stable gauged map the
corresponding vortex defines a homeomorphism $Z$ from the coarse
moduli space of $\ovl{\M}_n^G(C,X,d)$ to the moduli space of vortices
$\ovl{M}_n^K(C,X,d)$.
\end{theorem}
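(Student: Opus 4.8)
The plan is to upgrade Mundet's Hitchin-Kobayashi correspondence (Theorem \ref{corr}) from a bijection between isomorphism classes of objects to a homeomorphism of moduli spaces, and then to invoke the elementary fact that a continuous bijection from a compact space to a Hausdorff space is automatically a homeomorphism. First I would define the map $Z$ precisely and check that it is a bijection. On the principal component $C$ a stable gauged map is represented by a pair $(A,u)$ consisting of a connection on the $K$-reduction (equivalently a complex structure on $P(G)$) together with a section, well-defined up to the action of the complex gauge group $\G(P)$; by Theorem \ref{corr} each such orbit of a simple pair contains a vortex, unique up to the action of $\K(P)$. On a bubble component the bundle is trivial by the definition of a nodal gauged map, so there the gauged map restricts to an honest stable map into $X$, which is the same datum as a $J$-holomorphic sphere, i.e. a bubble in the Gromov sense. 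Thus $Z$ sends a stable gauged map to the associated vortex equipped with its tree of holomorphic bubbles.

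Well-definedness of $Z$ (independence of the chosen complex gauge representative) is the uniqueness clause of Theorem \ref{corr}. Injectivity holds because the holomorphic structure $\bar\partial_A$ and the section recover the complex gauge orbit, and hence the gauged map, from the vortex. Surjectivity follows from the remaining direction of Theorem \ref{corr}, namely that every vortex defines a Mundet-semistable gauged map, which is stable by the hypothesis that semistable $=$ stable, together with the identification of $J$-holomorphic spheres with stable maps on the (trivially bundled) bubble components.

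Next I would establish continuity of the inverse map $Z^{-1}$, which sends a vortex together with its bubble tree to the corresponding stable gauged map. The topology on the coarse moduli space is the one induced by versal deformations described in Section \ref{converge}: for the bundle and section it is $C^0$ convergence of holomorphic structures after complex gauge transformation, and for the domain it is the nodal gluing and degeneration of curves. A Gromov-convergent sequence of vortices $(A_\nu,u_\nu)$ converges, after unitary gauge transformations, in $C^\infty_{\loc}$ away from the finite bubbling set, so the associated holomorphic structures $\bar\partial_{A_\nu}$ converge in $C^0$ to that of the limit; the energy concentrating at the bubbling points produces exactly the bubble components of the limiting stable gauged map, with the nodal combinatorial type of the limit domain matching the bubble tree and the marked points converging as points of $\hat C$. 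This is precisely topological convergence in the sense of Section \ref{converge}, so $Z^{-1}$ is continuous.

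Finally I would conclude. By Gromov compactness for vortices established in the first part \cite{qk1}, the moduli space $\ovl{M}_n^K(C,X,d)$ is compact and Hausdorff in its Gromov topology. Under the hypothesis that stable $=$ semistable, the Corollary above shows that $\ovl{\M}_n^G(C,X,d)$ is a proper, hence separated, Deligne-Mumford stack, so by Proposition \ref{coarseproper} its coarse moduli space is compact and Hausdorff. Therefore $Z^{-1}$ is a continuous bijection from a compact space to a Hausdorff space, hence a homeomorphism, and so $Z$ is a homeomorphism. The main obstacle is the continuity step: one must match Gromov convergence of vortices, with its rescaling, energy quantization and bubble-tree formation, against the algebraic notion of topological convergence, with its nodal degeneration of the domain and $C^0$ convergence of holomorphic structures after complex gauge. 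Verifying this matching, in particular that no energy is lost in the limit and that the combinatorial type of the limiting configuration is reproduced correctly, relies on the removable singularity and compactness theorems for vortices proved in \cite{qk1}.
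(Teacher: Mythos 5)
Your overall strategy differs from the paper's in one essential respect: you prove continuity only of $Z^{-1}$ and then invoke the compact-to-Hausdorff trick, whereas the paper proves continuity of both $Z$ and $Z^{-1}$ directly. For $Z$ itself the paper runs an analytic argument you have bypassed: given an algebraic family $(A_s,u_s)$ specializing to $(A,u)$, it shows that the curvature terms $F_{A_s} + u_s^* P(\Phi)\Vol_C$ converge in $L^p$ away from the bubbling set, and then uses the implicit function theorem to show that the complex gauge transformation $\exp(i\xi_{A_s,u_s})$ realizing the minimum of the Mundet functional $\cI(P)$ converges in $W^{1,p}\hookrightarrow C^0$; this is the genuinely hard step of the Hitchin--Kobayashi correspondence in families. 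Your route trades this for the a priori knowledge that the coarse moduli space of $\ovl{\M}_n^G(C,X,d)$ is compact Hausdorff, which in the projective case is indeed available independently from Theorem \ref{qproper}, Proposition \ref{givmaincor}, the Corollary, and Proposition \ref{coarseproper}. In that case your argument is a legitimate and somewhat more economical alternative, provided the continuity of $Z^{-1}$ is fully justified: the point, as in the paper, is that the algebraic coarse space corepresents $C^0$ families by its Quot-scheme construction (Section \ref{converge}), so a Gromov-convergent sequence of vortices, organized into a continuous family of holomorphic bundles and sections, maps continuously into it.

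The genuine gap is in the second hypothesis of the theorem: when $X$ is a $G$-vector space with proper moment map, the GIT properness results (Theorem \ref{qproper} and its Corollary) are stated and proved only for projective $X$, so you have no independent source for the Hausdorffness of the coarse moduli space. Indeed the paper introduces Theorem \ref{coarse} precisely as ``another approach to the properness result'' --- in the non-projective case properness of the algebraic stack is meant to be \emph{deduced} from the homeomorphism with the compact vortex moduli space, so feeding properness in as a hypothesis makes your argument circular there. To cover that case you would still need to prove continuity of $Z$ directly, i.e.\ the continuous dependence of the complexified gauge transformation to the vortex on the algebraic family, which is exactly the implicit-function-theorem step in the paper's proof.
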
 

\begin{proof} 
That the map $Z$ is a bijection follows from Mundet's Theorem
\ref{corr} applied to the principal component.  We check that the map
is a homeomorphism.  The topology on the coarse moduli space
$\ovl{M}_n^G(C,X,d)$ is induced from specialization in families: For
any convergent sequence $[(P_\nu,u_\nu)] \to [(P,u)]$ there exists an
analytic family $\hat{C}$ of nodal curves over a connected complex
manifold $S$, a family of holomorphic $G$-bundles $P \to C \times S$,
a family of maps $\hat{C} \to P(X)$, and a convergent sequence $s_\nu
\in s$ such that $(P_\nu,u_\nu)$ resp. $(P,u)$ is isomorphic to the
fiber over $s_\nu $ resp. $s$.  Fixing a reduction of structure group
to $K$ and using the correspondence between holomorphic structures and
connections gives a family $(A_s \in \A(P), u_s: \hat{C}_s \to P(X))$
of connections and sections on a fixed $K$-bundle $P$.  If $s_\nu \in
S$ is a sequence converging to $s \in S$ as $\nu \to \infty$, then
$A_{s_\nu} \to A$ uniformly in all derivatives and $u_{s_\nu}$ Gromov
converges to $u_s$.  In particular, the principal component
$u_{s_\nu,0}$ converges to $u_{s,0}$ uniformly in all derivatives on
compact subsets of the complement of the bubbling set.  Then $
u_{s_\nu}^* P(\Phi) \to u_s^* P(\Phi)$ in the $L^p$ topology, for $p >
2$, and uniformly on compact subsets of the complement of the bubbling
set.  So $F_{A_{s_\nu}} + u_{s_\nu}^* P(\Phi) \Vol_C \to F_{A_{s}} +
u_{s}^* P(\Phi) \Vol_C$ in the $L^p$-topology on $\Omega^2(C,P(\k))$
and uniformly on compact subsets of the complement of the bubbling
set.  Let $\xi_{A,u}$ denote the unique global minimum of $\cI(P)$, so
that the correspondence is given by $(A,u) \mapsto \exp(i \xi_{A,u})
(A,u)$.  Then $F_{ \exp( i \xi_{A,u})(A_s,u_s)}$ converges to $F_{
  \exp( i \xi_{A,u})(A,u)}$ in $L^p$.  By the implicit function
theorem, there exists a unique complex gauge transformation of the
form $\exp(i \xi'_\nu)$ such that $\exp( i \xi'_\nu) \exp( i
\xi_{A,u}) (A_\nu,u_\nu) $ is a vortex, with $\xi'_\nu \to 0$ in
$W^{1,p}$.  Since $\exp(i \xi'_\nu) \exp( i \xi_{A,u}) = \exp( i
\xi_{A_\nu,u_\nu})$ mod $\K(P)$, this implies $\xi_{A_s,u_s} \to
\xi_{A,u}$ in $W^{1,p}$.  In particular, for $p > 2$ this implies
$\xi_{A_s,u_s} \to \xi_{A,u}$ in $C^0$, which implies that $Z$ is
continuous.
%
Continuity of the inverse map $\ovl{M}_n^K(C,X) \to \ovl{M}_n^G(C,X)$
follows from the fact that $\ovl{M}_n^G(C,X)$ is a coarse moduli space
for $C^0$ families of gauged maps.  This in turn follows from its
construction via Quot scheme methods as in Section \ref{converge}.
Namely, for each bundle one finds a point in the Grassmannian
corresponding to a realization of the bundle as a quotient; the
construction of this point depends continuously on the connection and
curve chosen.
\end{proof}

\begin{lemma} \label{collapse2}  $\ovl{\M}_n^{\pre,G}(C,X)$ is an Artin 
stack equipped with a morphism $\ovl{\M}_n^{\pre,G}(C,X) \to
\ovl{\M}_n^G(C,X)$ collapsing unstable components.
\end{lemma}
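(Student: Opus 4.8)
The plan is to realize $\ovl{\M}_n^{\pre,G}(C,X)$ as an open substack of the nodal gauged map stack $\ovl{\MM}_n^G(C,X)$, and then to obtain the collapsing morphism simply by restricting the one already produced in Lemma \ref{collapse1}. First I would invoke Theorem \ref{artin}, which provides that $\ovl{\MM}_n^G(C,X)$ is an Artin stack, together with the collapsing morphism $c: \ovl{\MM}_n^G(C,X) \to \ovl{\MM}_n^{G,st}(C,X)$ of Lemma \ref{collapse1}, which replaces a nodal gauged map by the stabilization of its section.

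The crucial observation is that Mundet semistability is detected on the principal component alone, so that it is preserved in both directions by $c$. Concretely, the Mundet weight $\mu(\sigma,\lambda)$ of \eqref{degree} is computed from the principal $G$-bundle $P \to C$, a parabolic reduction $\sigma : C \to P/R$, and the principal component $\Gr(u)_0$ of the associated graded section. Collapsing an unstable bubble (necessarily fiber-constant, hence carrying zero homology in the $X$-direction) changes none of these data: the bundle $P$ lives only over $C$ and is untouched, and the principal component of the section is unaffected by the contraction. Hence $(P,u)$ is Mundet semistable if and only if $c(P,u)$ is, and since $c(P,u)$ automatically has stable section, I conclude $\ovl{\M}_n^{\pre,G}(C,X) = c^{-1}(\ovl{\M}_n^G(C,X))$.

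It then remains to note that $\ovl{\M}_n^G(C,X)$ is an open substack of $\ovl{\MM}_n^{G,st}(C,X)$, which is precisely the openness of the Mundet semistability condition. This follows from the comparison with Schmitt's geometric invariant theory construction \cite{schmitt:git} recorded in Theorem \ref{qproper}, together with Proposition \ref{givmaincor}. Pulling back this open substack along the morphism $c$ of Artin stacks shows that $\ovl{\M}_n^{\pre,G}(C,X)$ is open in $\ovl{\MM}_n^G(C,X)$, hence itself an Artin stack, and the restriction of $c$ to this open substack supplies the desired collapsing morphism $\ovl{\M}_n^{\pre,G}(C,X) \to \ovl{\M}_n^G(C,X)$.

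The main obstacle I anticipate is the verification in the second paragraph that collapsing unstable components genuinely leaves the Mundet weight unchanged, i.e. that the degeneration limit defining $\Gr(u)_0$ and the linearization weight $\mu_X(\Gr(u)_0,-\lambda)$ attached to it depend only on $u$ restricted to the principal component and not on its bubbles. This reduces to checking that stabilization of the section commutes with the one-parameter degeneration $P_{\sigma,\lambda}$ used to form the associated graded, so that collapsing before or after taking the $z^\lambda$-limit yields the same principal component; a priori the two operations could interact, and this compatibility is what makes the identification $\ovl{\M}_n^{\pre,G}(C,X) = c^{-1}(\ovl{\M}_n^G(C,X))$ legitimate.
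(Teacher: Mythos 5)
Your proposal is correct and follows the same route as the paper, whose entire proof is the one-line observation that $\ovl{\M}_n^{\pre,G}(C,X)$ is the pre-image of $\ovl{\M}_n^G(C,X)$ under the collapsing morphism of Lemma \ref{collapse1}. Your additional verification that Mundet semistability depends only on the bundle $P \to C$ and the principal component of the section, and is therefore unchanged by stabilization, is exactly the point the paper leaves implicit.
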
 

\begin{proof}  
 $\ovl{\M}_n^{\pre,G}(C,X)$ is the pre-image of
  $\ovl{\M}_n^G(C,X)$ under the morphism of Lemma \ref{collapse1}.
\end{proof} 

The assignment $X \to \ovl{\M}_n^G(C,X)$ is functorial in the following
sense, generalizing functoriality of the stacks of stable map in
Behrend-Manin \cite{bm:gw}.

\begin{definition} \label{categ}
The category of {\em smooth polarized varieties with reductive group
  actions} has 
\begin{enumerate} 
\item { \rm (Objects)} are data $(G,X,L)$ consisting of a reductive
  group $G$, a smooth polarized $G$-variety $X$, and an ample $G$-line
  bundle $L \to X$;
\item {\rm (Morphisms)} from $(G_0,X_0,L_0)$ to $(G_1,X_1,L_1)$
  consist of pairs of a morphism $\varphi: X_0 \to X_1$ a {\em
    surjective} homomorphism $\psi: G_0 \to G_1$ and an {\em
    injective} right inverse $\iota: G_1 \to G_0$ such that $\varphi$
  preserves Hilbert-Mumford weights, that is, if $x_0$ is fixed by
  one-parameter subgroup $\C^\times \to \iota(G_1)$ then $x_1$ has the
  same weight as $\varphi(x_0)$.
\end{enumerate} 
\end{definition} 

\begin{remark}  The 
definition of morphism implies that $G_0$ is a product of $G_1$ with
the kernel of $\psi$, and that the semistable locus in $X_0$ maps to
the semistable locus in $X_1$.
\end{remark}

\begin{proposition} $X \mapsto \ovl{\M}_n^G(C,X)$ extends to a functor 
from the category of smooth polarized varieties with reductive group
actions to (Artin stacks, equivalence classes of morphisms of Artin
stacks).
\end{proposition}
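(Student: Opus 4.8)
The plan is to define the functor on morphisms by pushing a gauged map forward along a morphism $(\varphi,\psi,\iota)$, collapsing any components that $\varphi$ destabilizes, and then to check that Mundet semistability survives the pushforward and that the assignment respects composition up to $2$-isomorphism. Recall from the Remark following Definition \ref{categ} that a morphism $(\varphi,\psi,\iota)\colon (G_0,X_0,L_0)\to(G_1,X_1,L_1)$ forces a splitting $G_0\cong G_1\times H$ with $H=\ker\psi$, where $\psi$ is the projection and $\iota$ the inclusion, and that $\varphi$ is $\psi$-equivariant.

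On morphisms I would proceed as follows. Given a family $(\hat C,P_0,u_0,\ul z)$ over $S$ representing an object of $\ovl{\M}^{G_0}_n(C,X_0)$, form the induced bundle $P_1:=P_0\times_{G_0,\psi}G_1$ on $C\times S$. Since $\varphi$ is $\psi$-equivariant, $[p,x]\mapsto[p,\varphi(x)]$ is a well-defined morphism of associated fiber bundles $P_0(X_0)\to P_1(X_1)$, and $u_1:=[\varphi]\circ u_0\colon\hat C\to P_1(X_1)$ is again a section of base class $[C]$. Induction of bundles, pushforward of the section, and the relative $\Proj$ construction used for collapsing all commute with base change, so $(\hat C,P_0,u_0,\ul z)\mapsto(\hat C,P_1,u_1,\ul z)$ defines a morphism of Artin stacks $\ovl{\MM}^{G_0}_n(C,X_0)\to\ovl{\MM}^{G_1}_n(C,X_1)$. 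The pushed-forward section need not be stable, since $\varphi$ may contract components; but by Lemma \ref{collapse2} one may collapse the resulting unstable bubbles, which map to points of $C$ and leave the principal component untouched. Composing the inclusion $\ovl{\M}^{G_0}_n(C,X_0)\hookrightarrow\ovl{\M}^{\pre,G_0}_n(C,X_0)$, the pushforward, and this collapsing morphism produces the candidate morphism $\ovl{\M}^{G_0}_n(C,X_0)\to\ovl{\M}^{G_1}_n(C,X_1)$.

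The key step, and the main obstacle, is to show that Mundet semistability is preserved, so that the pushforward indeed lands in the Mundet semistable prestable locus $\ovl{\M}^{\pre,G_1}_n(C,X_1)$ before collapsing. I would argue by contraposition: a destabilizing pair $(\sigma_1,\lambda_1)$ for $(P_1,u_1)$ lifts to one for $(P_0,u_0)$ of equal Mundet weight. Writing $R_1\subset G_1$ for the parabolic and $\lambda_1\in\z(\lie{l}_1)$ the antidominant coweight, set $R_0:=\psi^{-1}(R_1)=\iota(R_1)\cdot H$, a parabolic of $G_0$ with Levi $L_0=\iota(L_1)\cdot H$; the canonical identification $P_0/R_0\cong P_1/R_1$ turns $\sigma_1$ into a reduction $\sigma_0$, and put $\lambda_0:=\iota_*\lambda_1\in\z(\lie{l}_1)\subset\z(\lie{l}_0)$. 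In the Mundet weight \eqref{degree} the first term matches because $\C_{-\lambda_0}$ factors through $\psi$, so that $p_{0*}\sigma_0^*P_0\times_{L_0}\C_{-\lambda_0}$ is identified with $p_{1*}\sigma_1^*P_1\times_{L_1}\C_{-\lambda_1}$ and has the same $c_1$; the associated-graded term matches because $\varphi$ carries the degeneration by $z^{\lambda_0}$ to that by $z^{\psi\lambda_0}=z^{\lambda_1}$, whence $\varphi$ maps $\Gr(u_0)_0$ to $\Gr(u_1)_0$, and since $\lambda_0$ lies in the image of $\iota$ the weight-preservation hypothesis of Definition \ref{categ} gives $\mu_{X_0}(\Gr(u_0)_0,-\lambda_0)=\mu_{X_1}(\Gr(u_1)_0,-\lambda_1)$. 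Thus $\mu(\sigma_0,\lambda_0)=\mu(\sigma_1,\lambda_1)$, so violation of \eqref{mundetstable} for $(P_1,u_1)$ forces it for $(P_0,u_0)$; equivalently, semistability is preserved.

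Finally I would verify functoriality in the target category, where only equivalence classes of morphisms are recorded, so $2$-isomorphisms suffice. The identity $(\on{id},\on{id},\on{id})$ induces $P_1=P_0$ and $u_1=u_0$ with the section already stable, hence the identity morphism. For a composite $(G_0,X_0)\to(G_1,X_1)\to(G_2,X_2)$, transitivity of induction $P_0\times_{G_0}G_2\cong(P_0\times_{G_0}G_1)\times_{G_1}G_2$ together with functoriality of pushing the section forward along $\varphi_{12}\circ\varphi_{01}$ give canonical isomorphisms of the underlying nodal gauged maps; the only point needing care is that collapsing once, after reaching $X_2$, agrees with collapsing after $X_1$ and again after $X_2$. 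This holds because the $X_1$-stage contracts only components already constant in $X_1$, hence constant in $X_2$, which would be contracted at the $X_2$-stage as well, and the universal property of the $\Proj$ stabilization in Lemma \ref{collapse1} then identifies the two results canonically. These canonical $2$-isomorphisms establish that $X\mapsto\ovl{\M}^G_n(C,X)$ is a functor.
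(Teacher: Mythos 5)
Your proposal is correct and follows essentially the same route as the paper: induce the bundle along $\psi$, push the section forward along $\varphi$, lift any parabolic reduction of $P_1$ to the reduction of $P_0$ by $R_0=\psi^{-1}(R_1)$, use the Hilbert--Mumford weight-preservation hypothesis (applicable since $\lambda_0=\iota_*\lambda_1$ lies in $\iota(G_1)$) to match the Mundet weights, and then compose with the collapsing morphism of Lemma \ref{collapse2}. Your verification of the composition axiom is more detailed than the paper's, which simply asserts that the functor axioms are immediate from the definition of the collapse maps, but the underlying argument is the same.
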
  

\begin{proof}  Consider the composition $\ovl{\MM}_n^{G_0}(C,X_0)
\to \ovl{\MM}_n^{G_1}(C,X_1)$.  Let $(P_0,u_0)$ be an object of
$\ovl{\MM}_n^{G_0}(C,X_0)$.  Any parabolic reduction of $P_0
\times_{G_0} G_1$ to a parabolic subgroup $R_1$ defines a parabolic
reduction of $P_1$ to $R_0 = \psi^{-1}(R_1)$, via the isomorphism $P
\times_{G_0} G_1/R_1 \to P/R_0$, and the associated graded bundles
$\Gr(P_0)$.  Any character of the center of $R_1$ defines a character
of the center of $R_0$.  The image of the associated graded section
$\Gr(u_0): C \to P(X_0)$ is the associated graded section of the image
of $u_0$ under $P(X_0) \to P(X_1)$.  Since the Hilbert-Mumford weights
are preserved, the Mundet weight is the same and the image of the
Mundet semistable locus $\ovl{\M}_n^{G_0}(C,X_0)$ lies in
$\ovl{\M}_n^{G_1,\pre}(C,X_1)$.  By restriction we obtain a morphism from
$\ovl{\M}_n^{G_0}(C,X_0)$ to $\ovl{\M}_n^{G_1,\pre}(C,X_1)$, and by
composition with the collapse map, to $\ovl{\M}_n^{G_1}(C,X_1)$.  The
functor axioms (identity, composition) are immediate from the
definition of the collapse maps.
\end{proof} 

\noindent In particular taking $X_1$ and $G_1$ in the lemma above to
be trivial gives:

\begin{corollary} \label{gaugedforget} There exists a forgetful morphism
$f:\ovl{\M}_n^G(C,X) \to \ovl{\M}_n(C)$ which maps
  $(\hat{C},P,u,\ul{z})$ to the stable map to $C$ obtained from
  $(C,\pi \circ u,\ul{z})$ by composing with the projection $C \times
  X/G \to C$ and collapsing unstable components as in \eqref{proj1}.
\end{corollary}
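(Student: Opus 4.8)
The plan is to obtain $f$ as the special case of the preceding Proposition in which the target object is terminal, namely $(G_1,X_1,L_1) = (\{e\}, \pt, \mO)$ with $G_1$ the trivial group and $X_1$ a point. First I would exhibit the required morphism in the category of Definition \ref{categ}: take $\varphi : X \to \pt$ the constant map, $\psi : G \to \{e\}$ the trivial (hence surjective) homomorphism, and $\iota : \{e\} \to G$ the inclusion of the identity, which is injective and a right inverse to $\psi$. The condition that $\varphi$ preserve Hilbert--Mumford weights is vacuous, since $\iota(\{e\}) = \{e\}$ contains no nontrivial one-parameter subgroup.

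Next I would identify the target stack. Unwinding the definition of a gauged map into the quotient stack $\pt/\{e\} = \pt$ with base class $[C]$, such a datum is simply a stable map $\hat{C} \to C \times \pt = C$ of class $[C]$ together with its markings; hence $\ovl{\M}_n^{\{e\}}(C,\pt)$ is canonically isomorphic to the stack $\ovl{\M}_n(C)$ of stable $C$-parametrized curves of Example \ref{stacksofschemes}. Similarly the prestable version $\ovl{\M}_n^{\pre,\{e\}}(C,\pt)$ is the stack of nodal $C$-parametrized curves, and the collapsing morphism between the two coincides with the stabilization via the $\Proj$ construction \eqref{proj1}, whose formation commutes with base change.

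Finally I would apply the functoriality established in the preceding Proposition to this morphism, producing $f : \ovl{\M}_n^G(C,X) \to \ovl{\M}_n(C)$, and read off its effect on objects from the proof of that Proposition. The image of $u : \hat{C} \to P(X)$ under the induced map $P(X) \to P(X_1) = C$ is exactly $\pi \circ u$, so the composite sends $(\hat{C},P,u,\ul{z})$ to the nodal $C$-parametrized curve $(\hat{C}, \pi \circ u, \ul{z})$, followed by the collapsing morphism \eqref{proj1} that stabilizes the underlying curve; this is precisely the asserted description. There is no genuine obstacle here: the Mundet semistability condition on the trivial-group side is automatically satisfied, so the semistable locus maps into the prestable locus with no further checking, and both the well-definedness of the collapse and its compatibility with base change are inherited from the Proposition and from \eqref{proj1}. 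The only point requiring any care is the identification of $\ovl{\M}_n^{\{e\}}(C,\pt)$ with $\ovl{\M}_n(C)$ and of the associated collapse map with \eqref{proj1}, which is immediate from the definitions.
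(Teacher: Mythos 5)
Your proposal is correct and follows essentially the same route as the paper, which obtains the corollary precisely as the special case of the preceding functoriality Proposition with $G_1$ and $X_1$ taken to be trivial. The details you supply --- the vacuity of the Hilbert--Mumford weight condition, the identification of $\ovl{\M}_n^{\{e\}}(C,\pt)$ with $\ovl{\M}_n(C)$, and the automatic Mundet semistability on the trivial-group side --- are exactly the points the paper leaves implicit.
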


\vskip .1in

In order to investigate splitting properties of the gauged
Gromov-Witten invariants we introduce moduli spaces whose
combinatorial type is a rooted forest (finite collection of trees)
$\Gamma$.  Denote by $\ovl{\MM}^{G}_{n,\Gamma}(C,X)$, resp.
$\ovl{\MM}^{G,st}_{n,\Gamma}(C,X)$, resp.
$\ovl{\M}^{G,\pre}_{n,\Gamma}(C,X)$, resp.  $\ovl{\M}^G_{n,\Gamma}(C,X)$
the stacks of nodal gauged maps resp. nodal gauged maps with stable
sections resp.  Mundet semistable maps resp. Mundet semistable maps
with stable sections of combinatorial type $\Gamma$ defined as
follows:

\begin{definition} {\rm (Stacks of gauged maps with disconnected combinatorial type)} 
 Suppose that $\Gamma = \Gamma_0 \cup \Gamma_1 \ldots \cup \Gamma_l$
 is a disjoint union of trees $\Gamma_0,\ldots,\Gamma_l$ equipped with
 a root vertex $v_0 \in \Ve(\Gamma_0)$ and a homology class $d = d_0 +
 \ldots + d_l \in H_2^G(X,\Z)$.  Let $\ovl{\M}^{G}_{n_0,\Gamma_0}(C,X)$
 be defined as above and for $i \ge 1$ (not containing the root
 vertex) let
$$\ovl{\M}^{G}_{n_i,\Gamma_i}(C,X,d_i) :=
  \ovl{\M}_{0,n_i,\Gamma_i}(X,d_i)/G$$
(the quotient of the moduli stack of parametrized stable maps by the
  $G$-action).  Let $\ovl{\M}^{G}_{n,\Gamma}(C,X,d)$ be the product of
  moduli stacks $\ovl{\M}^{G}_{n,\Gamma_i}(C,X,d_i)$.
\end{definition} 

Let $<$ denote the partial ordering on combinatorial types, so that
$\Gamma < \Gamma'$ if $\Gamma'$ is obtained from $\Gamma$ by
collapsing edges.  Denote by $\ovl{\M}_{n,\Gamma}(C) = \cup_{\Gamma'
  \leq \Gamma} \M_{n,\Gamma'}(C)$ the ``compactified'' stack of nodal
curves of combinatorial type $\Gamma$ and $\ovl{\cC}_{n,\Gamma}(C) \to
\ovl{\M}_{n,\Gamma}(C)$ the universal curve.  These stacks of various
combinatorial types are related as follows, in the language of tree
morphisms \cite{bm:gw}.

\begin{proposition}  \label{cutcollapse2}  Any morphism of rooted trees
$\Ups: \Gamma \to \Gamma'$ induces a morphism of moduli spaces of
  nodal resp. stable gauged maps
$$
 \ovl{\MM}(\Ups,X): \ovl{\MM}^G_{n,\Gamma}(C) \to 
\ovl{\MM}^G_{n,\Gamma'}(C), \quad 
 \ovl{\M}(\Ups,X): \ovl{\M}^G_{n,\Gamma}(C) \to 
\ovl{\M}^G_{n,\Gamma'}(C) .$$
In particular, 
\begin{enumerate} 
\item {\rm (Cutting an edge)} If $\Ups: \Gamma \to \Gamma'$ is a
  morphism cutting an edge, then $\ovl{\M}^{G}_{n,\Gamma}(C,X)$ may be
  identified with the fiber product $\ovl{\M}^{G}_{n,\Gamma'}(C,X)
  \times_{(X/G)^2} (X/G)$ over the diagonal $\Delta: (X/G) \to
  (X/G)^2$ and $\ovl{\M}(\Ups,X)$ is projection of the fiber product
  on the first factor.
\item {\rm (Collapsing an edge)} If $\Gamma'$ is obtained from
  $\Gamma$ by collapsing an edge then $ \ovl{\M}^G_{n,\Gamma}(C,X)$ is
  isomorphic to $\ovl{\M}^{G}_{n,\Gamma'}(C,X)
  \times_{\ovl{\MM}_{n,\Gamma'}(C)} \ovl{\MM}_{n,\Gamma}(C) $ and
  $\ovl{\M}(\Ups,X)$ is projection on the first factor.
\end{enumerate} 
\end{proposition}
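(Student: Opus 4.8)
The plan is to reduce the statement to the elementary morphisms of rooted trees---collapsing an edge, cutting an edge, and forgetting a tail---since every tree morphism $\Ups$ factors as a composition of these, following Behrend--Manin \cite{bm:gw}; it then suffices to construct $\ovl{\MM}(\Ups,X)$ and $\ovl{\M}(\Ups,X)$ for each elementary type and to check that the constructions compose correctly. For each type I would build the morphism at the nodal level first, using the realization of $\ovl{\MM}^G_{n,\Gamma}(C,X)$ as a substack of $\Hom_{\ovl{\MM}_{n,\Gamma}(C)}(\ovl{\CC}_{n,\Gamma}(C), X/G)$ from the proof of Theorem \ref{artin}, together with the corresponding morphisms of the stacks of parametrized curves constructed in Example \ref{stacksofschemes}. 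The forgetting-a-tail case is already supplied by Lemma \ref{collapse1} and Corollary \ref{gaugedforget} via the $\Proj$ collapsing construction \eqref{proj1}.

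For cutting an edge $\Ups: \Gamma \to \Gamma'$, the domain curve of a type-$\Gamma'$ gauged map is the normalization, at the node labeled by the cut edge, of the domain curve of a type-$\Gamma$ gauged map. Hence a type-$\Gamma$ gauged map is the same datum as a type-$\Gamma'$ gauged map whose two evaluations at the new markings into $X/G$ coincide; since these evaluations are precisely the two projections $\ovl{\MM}^G_{n,\Gamma'}(C,X) \to X/G$, the matching condition is exactly the fiber product over $(X/G)^2$ along the diagonal $\Delta$, which yields claim (a) at the nodal level and identifies $\ovl{\MM}(\Ups,X)$ with projection to the first factor. For collapsing an edge, the curve-level morphism $\ovl{\MM}_{n,\Gamma}(C) \to \ovl{\MM}_{n,\Gamma'}(C)$ of Example \ref{stacksofschemes} relates the two combinatorial types on the domain; since a gauged map is a map out of the domain curve together with the bundle data, pulling the universal family back along this morphism identifies $\ovl{\MM}^G_{n,\Gamma}(C,X)$ with the base change $\ovl{\MM}^G_{n,\Gamma'}(C,X) \times_{\ovl{\MM}_{n,\Gamma'}(C)} \ovl{\MM}_{n,\Gamma}(C)$ and $\ovl{\MM}(\Ups,X)$ with projection to the first factor, giving claim (b) at the nodal level.

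To pass to the Mundet-semistable loci I would argue that in each case semistability is compatible with the above descriptions, so that $\ovl{\M}(\Ups,X)$ is the restriction of $\ovl{\MM}(\Ups,X)$ and the fiber-product identities restrict accordingly; this is where I expect the main work. The essential point is that the Mundet weight \eqref{degree} of a gauged map is computed from the principal $G$-bundle $P \to C$, the parabolic reductions $(\sigma,\lambda)$ of $P$, and the principal component $\Gr(u)_0$ of the associated graded section---none of which is altered by normalizing or smoothing a node on the bubble tree. Thus cutting an edge leaves the set of destabilizing pairs and their weights unchanged, so a type-$\Gamma$ map is Mundet semistable iff its normalization is, and likewise collapsing an edge preserves semistability, exactly in the way that preservation of Hilbert--Mumford weights was used in the functoriality proposition above. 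For forgetting a tail one invokes that the $\Proj$ collapsing construction preserves semistability, already established in Lemma \ref{collapse1}.

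Finally, I would verify functoriality of $\Ups \mapsto \ovl{\M}(\Ups,X)$---that identities go to identities and compositions to compositions---from the compatibility of the collapsing constructions \eqref{proj1} and of the fiber-product descriptions, exactly as in Behrend--Manin \cite{bm:gw} and the twisted-stable-map analogue Proposition \ref{cutcollapse}. The hardest part is the semistability compatibility for cutting an edge: one must confirm that the associated graded section $\Gr(u)$ and the weight $\mu_X(\Gr(u)_0,-\lambda)$ depend only on the principal component, so that separating a node on the bubble tree can neither create nor destroy a destabilizing pair $(\sigma,\lambda)$.
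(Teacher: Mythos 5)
The paper offers no proof of this proposition --- like the parallel Propositions \ref{cutcollapse} and \ref{cutcollapse3} it is treated as essentially immediate from the definitions --- and your outline is correct and is exactly the argument the surrounding text sets up: reduce to the elementary tree morphisms in the style of Behrend--Manin, obtain the nodal-level fiber-product identifications from the Hom-stack description in Theorem \ref{artin} together with the curve-level morphisms of Example \ref{stacksofschemes}, and observe that Mundet semistability is insensitive to cutting or collapsing edges of the bubble tree because the weight \eqref{degree} involves only the bundle $P \to C$, the parabolic reduction, and the principal component $\Gr(u)_0$ of the associated graded section. The one small slip is citing Corollary \ref{gaugedforget} for the forgetting-a-tail case --- that corollary forgets the morphism to $X/G$ rather than a marking --- but Lemma \ref{collapse1} (and Lemma \ref{collapse2} for the Mundet-semistable locus) already covers that case, as you also note.
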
 

\subsection{Toric quotients and quasimaps} 

In this section we treat the case that $X$ is a vector space equipped
with a linear action of a torus $G$.  

\begin{remark} {\rm (Quotients of vector spaces by tori)}  
 Suppose $X$ has weights $\mu_1,\ldots,\mu_k \in \g^\dual$.  A moment
 map for the $G$-action on $X$ is given by
$$ (z_1,\ldots,z_k) \mapsto \nu - \left( \sum_{i=1}^k \mu_i | z_i |^2 /2
\right) $$
where $\nu \in \g^\dual$ is a constant.  Assuming $\nu$ is rational,
the choice of this constant determines a polarization $\mO_X(1) \to X$
given by twisting the trivial bundle with the rational character
corresponding to $\nu$.  The semistable locus is then
\begin{equation} \label{toricss}
 X^{\sss} = \left\{ (z_1,\ldots,z_k) | \on{span} \{ \mu_i, z_i \neq
0\} \ni \nu \right\} .\end{equation} 
The git quotient $X \qu G$ is a toric stack with residual action of
the torus $(\C^\times)^k/ G$.  One has stable=semistable if
$\mu_i(\nu) \neq 0$ for all $i$.  If so, the git quotient $X \qu G$ is
proper if the weights $\mu_1,\ldots,\mu_k$ are contained in an open
half-space in the real part.  Note that $X \qu G$ depends on the
choice of $\nu$.  The components of the complements of the hyperplanes
$\ker \mu_i$ are called {\em chambers} for $\nu$.
\end{remark} 

\begin{example} \label{c4} {\rm (The projective plane and its blow-up
as a quotient of affine four-space)} Suppose that $X = \C^4$ and $G =
  (\C^\times)^2$ acting with weights $(1,0),(1,0),(1,1),(0,1)$.  
\begin{enumerate} 
\item For $\nu = (1,2)$ the unstable locus has a component given by
  the sum of the weight spaces with weights $(1,0),(1,1)$ and a
  component equal to the weight space with weight $(0,1)$.  The
  quotient $X \qu G$ is isomorphic to $\P^2$ via the map
$ [x_1,x_2,x_3,x_4] \mapsto [x_1,x_2,x_3x_4^{-1}] \in \P^2 .$
\item For $\nu = (2,1)$, the unstable locus has a component given by
  the sum of the weight spaces with weights $(0,1),(1,1)$ and a
  component with weight $(0,1)$.  The quotient $X \qu G$ is
  isomorphic to the blow-up of $\P^2$ with the map to $\P^2$ blowing
  down the exceptional divisor given by $ [x_1,x_2,x_3,x_4] \mapsto
  [x_1,x_2,x_3x_4^{-1}] .$
\end{enumerate} 
  See Figure \ref{twochamber}.
\end{example} 

\begin{figure}[ht]
\begin{center} 
\includegraphics[height=2in]{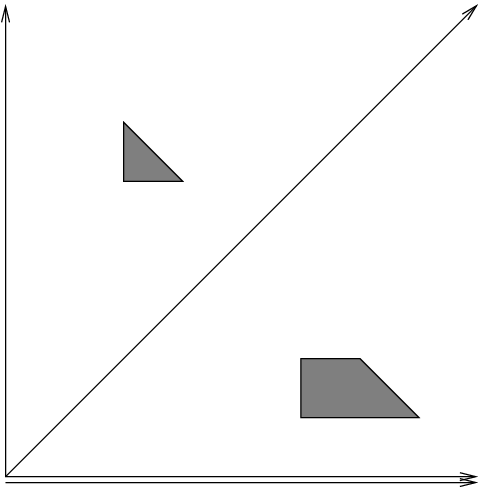}
\end{center}
\caption{Quotients for the $(\C^\times)^2$ action on $\C^4$}
\label{twochamber}
\end{figure}

Morphisms from a curve $C$ to the git quotient $X \qu G$ are closely
related to objects in the {\em stack of quasimaps} $H^0(C, P \times_G
X)/G$ as follows.  If $u \in H^0(C, P \times_G X)$ takes values in the
semistable locus then it defines a map to $X \qu G$, and any map to $X
\qu G$ arises in this way.  If $C$ has genus zero, $P \to C$ has
$c_1(P) = d$ and $X_j$ denotes the weight space with weight $\mu_j$
then there is an isomorphism of $G$-modules
\begin{equation} \label{Xd}
H^0(C, P \times_G X) \to X(d) := \bigoplus_j X_j^{\oplus
  \max(0,(d,\mu_j)+1)} .\end{equation}
Any polarization $\mO_X(1)$ of $X$ induces a polarization
$\mO_{X(d)}(1)$ by taking the moment map resp. polarization to be
given by $\nu \in \g^\dual$ We say that a quasimap $u \in H^0(C, P
\times_G X)$ is {\em (semi)stable} if it is (semi)stable for the
polarization $\mO_{X(d)}(1)$.

\begin{proposition}  \label{toricstable}  
For any $d \in H_2^G(X,\Z)$, there exists a constant $\rho_0 $ such
that if stable=semistable for the $G$-action on $X(d)$ and $\rho >
\rho_0$ then a gauged map $(P,u)$ of class $d$ is $\rho$-semistable
iff $u \in H^0(C,P \times_G X)$ is semistable for the action of $G$,
so that there is an isomorphism of stacks
$$ \M^G(C,X,d) \cong H^0(C,P \times_G X) \qu G = X(d) \qu G.$$
\end{proposition}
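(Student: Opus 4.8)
The plan is to exploit two features special to a vector space $X$ acted on linearly by a torus $G$: the rigidity of $G$-bundles on the genus zero curve $C$, and the absence of proper parabolic subgroups of $G$. First I would identify the ambient stack of class-$d$ gauged maps with irreducible domain. Since $C$ has genus zero (as in \eqref{Xd}), every principal $G$-bundle of class $d$ is isomorphic to a fixed bundle $P$ (for instance a sum of line bundles), and its gauge group $\Aut(P) = H^0(C, P\times_G G) = G$, because holomorphic maps $C \to G$ are constant. Hence the groupoid of data $(P,u)$ with $\hat C \cong C$ of class $d$ is equivalent to $[H^0(C,P\times_G X)/G]$, which by the $G$-module isomorphism \eqref{Xd} is $[X(d)/G]$. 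It then remains to show that the Mundet $\rho$-semistable locus corresponds under this equivalence to the git-semistable locus $X(d)^{\sss}$, since $[X(d)^{\sss}/G] = X(d)\qu G$ when stable=semistable.

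Next I would unwind the Mundet weight \eqref{degree} for a torus. As $G$ is abelian and connected, its only parabolic is $G$ itself, the Levi is $L = G$, and parabolic reductions are trivial; thus a destabilizing pair $(\sigma,\lambda)$ is simply a coweight $\lambda \in \g$, the associated graded bundle is $P$, and $\Gr(u)_0$ is the principal component of the stable limit of $z^\lambda u$. After scaling $[\Vol_C]$ by $\rho$ the weight becomes
\begin{equation*}
\mu_\rho(\lambda) = \int_{[C]} c_1\!\left(P\times_G \C_{-\lambda}\right) + \rho\,\mu_X(\Gr(u)_0,-\lambda) = -(d,\lambda) + \rho\,\mu_X(\Gr(u)_0,-\lambda).
\end{equation*}
The key step is the moment-map term: writing $u = \sum_j u_j$ with $u_j \in H^0(C, P\times_G X_j)$, the weight $\mu_X(\Gr(u)_0,-\lambda)$ depends only on $\nu$, on $\lambda$, and on the support $\{\,j : u_j \neq 0\,\}$, and I would show it equals the Hilbert--Mumford weight of the vector $u \in X(d)$ for the $G$-action with polarization $\mO_{X(d)}(1)$. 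This reduces the curve computation to finite-dimensional git: under \eqref{Xd} the support $\{\,j : u_j \neq 0\,\}$ is exactly the set of nonzero homogeneous components of $u \in X(d)$, and by \eqref{toricss} semistability in $X(d)$ is governed by whether $\nu$ lies in the span of the weights of those components, so the sign of $\mu_X(\Gr(u)_0,-\lambda)$ records precisely git semistability.

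Finally I would carry out the asymptotic comparison in $\rho$. The term $-(d,\lambda)$ is independent of $\rho$, while the moment-map term scales with $\rho$; moreover only finitely many coweight directions $\lambda$ are relevant, namely those defining the walls determined by subsets of the weights $\mu_1,\ldots,\mu_k$. For fixed $d$ both $(d,\lambda)$ and the finitely many possible values of $\mu_X(\Gr(u)_0,-\lambda)$ (one for each support and each wall) are explicit, and the hypothesis that stable=semistable for the $G$-action on $X(d)$ forces these moment-map weights to be bounded away from zero on each relevant ray. Hence a single $\rho_0$ suffices so that for $\rho > \rho_0$ the sign of $\mu_\rho(\lambda)$ agrees with that of the moment-map term; thus $(P,u)$ is $\rho$-semistable iff $u \in X(d)^{\sss}$, and combining with the first paragraph yields $\M^G(C,X,d) \cong X(d)\qu G$. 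I expect the main obstacle to be the identification of $\mu_X(\Gr(u)_0,-\lambda)$ with the finite-dimensional Hilbert--Mumford weight on $X(d)$: one must control the stable limit $\Gr(u)_0$, which acquires bubbles precisely on the weight spaces where $z^\lambda u$ blows up, and verify that the resulting principal-component moment weight is the combinatorial quantity appearing in \eqref{toricss}. The role of the stable=semistable hypothesis is exactly to exclude the marginal coweights on which the bounded degree term $-(d,\lambda)$ could otherwise flip a non-strict inequality.
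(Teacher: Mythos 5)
Your proposal is correct and follows essentially the same route as the paper's (much terser) proof: since $G$ is abelian there are no proper parabolic reductions, the Mundet weight reduces to $\int_{[C]}(c_1(P)+\rho\,P(\Phi)\circ\Gr(u)_0[\Vol_C],-\lambda)$, and for $\rho$ large the $c_1(P)$ term is negligible so the condition becomes git semistability of $u\in H^0(C,P\times_G X)=X(d)$. The paper leaves implicit the points you elaborate --- the identification of the ambient stack with $[X(d)/G]$ via rigidity of torus bundles on $\P^1$, the matching of $\mu_X(\Gr(u)_0,-\lambda)$ with the finite-dimensional Hilbert--Mumford weight, and the finiteness of relevant walls that yields a uniform $\rho_0$ --- so your write-up is a faithful expansion of the same argument.
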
  

\begin{proof}  Since $G$ is abelian, there are no parabolic reductions
and Mundet's criterion for semistability becomes 
$$ \mu(\sigma,\lambda) = \int_{[C]} (c_1(P) + \rho P(\Phi) \circ
\Gr({u})_0 [\Vol_C], - \lambda ) \leq 0 $$ 
where $\lambda$ represents an infinitesimal automorphism of the bundle
$P$, that is, an element of the group $G$.  For $\rho$ sufficiently
large, we may ignore the term involving $c_1(P)$ and obtain the
stability condition for the action of $G$ on $H^0(C, P \times_G X)$.
\end{proof}

\begin{example} \label{c41} 
\begin{enumerate}
\item {\rm (Projective Space)} Let $X = \C^k$ with $G = \C^\times$ acting
  diagonally.  Identify $H_2^G(X,\Z) \cong \Z$.  Then $X(d) = \C^{kd}$
  and $\M^G(C,X,d) = \C^{kd} \qu \C^\times = \P^{kd-1}$.  A polynomial $[u]
  \in \M^G(C,X,d)$ defines a map to $\P^{k-1}$ of degree $d$ iff its
  components have no common zeroes.
\item {\rm (The projective plane and its blow-up as a quotient by a
  two-torus)} Suppose that $X = \C^4$ and $G = (\C^\times)^2$ acting with
  weights $(1,0)$, $(1,0)$, $(1,1)$, $(0,1)$.  With $d = (1,0)$, we
  have $X(d) = \C_{(0,1)} \oplus \C_{(1,1)}^{\oplus 2} \oplus
  \C_{(1,0)}^{\oplus 4} .$ The moduli spaces of gauged maps are $\P^5$
  for $\nu = (1,2)$ or $\Bl_{\P^1}(\P^5)$ for $\nu= (2,1)$. For
  example, by Thaddeus \cite{th:fl} the two quotients are related by
  blow-up along $ \C_{(1,1)}^{\oplus 2} \qu G = \P^1$.  
\end{enumerate}
\end{example} 


The comparison between the vortex equations and quasimaps has been
investigated from the symplectic point of view by J. Wehrheim
\cite{jw:vi}, based on earlier work of Cieliebak-Salamon
\cite{ciel:wall}.  The space of quasimaps appears in the work of
Morrison-Plesser \cite{mp:si}, Givental \cite{gi:eq}, Lian-Liu-Yau
\cite{lly:mp1} etc.  on mirror symmetry as an algebraic model for the
space of stable maps to the quotient $X \qu G$.

\subsection{Affine gauged maps}

Let $X$ be a $G$-variety as above.  In this section we construct the
stack $\ovl{\M}_{n,1}^G(\bA,X)$ of affine gauged maps.  These are used
later to construct the quantum Kirwan morphism.  The following extends
\cite[Definition 1.2]{qk1} 
to the case of orbifold target $X \qu
G$.

\begin{definition} {\rm (Affine gauged maps)} 
 An {\em $n$-marked affine gauged map} to $X$ over a scheme $S$
 consists of
\begin{enumerate} 
\item {\rm (Projective weighted line)} a weighted projective line $C = \P[1,r]$ for some $r > 0$ 
\item {\rm (Marking)} an $n$-tuple of distinct points $(z_1,\ldots,
  z_n) : S \to (C - \{ \infty \})^n$, where $\infty := B\mu_r$ is the
  stacky point at infinity;
\item {\rm (Scaling)} a non-zero meromorphic one-form $\lambda \in
  H^0(C \times S, T^\dual_C (2 \infty))$ and
\item {\rm (Representable morphism)} a representable morphism $u: C
  \times S \to X/G$ such that $u(\infty,s) \in X \qu G$ for all $s \in
  S$.
\end{enumerate} 
A {\em morphism} of $n$-marked affine gauged maps $(\ul{z}_j,
\lambda_j,u_j)$ consists of an automorphism $\psi: C \to C$ mapping
$z_{0,i}$ to $z_{1,i}$ and pulling back $\lambda_1$ to $\lambda_0$ and
an isomorphism of $u_1 \circ \psi$ with $u_0$. 
\end{definition}  

The complement of $\infty$ in $C$ has the structure of an affine line
determined by $\lambda$, hence the use of the terminology {\em
  affine}.  The category $\M_{n,1}^G(\bA,X)$ of $n$-marked affine
gauged maps to $X / G$ has the structure of an Artin stack: In the
case that $X \qu G$ is a free quotient, it is an open substack of the
stack $\Hom_{\ovl{\MM}_{n,1}(\bA)}(\ovl{\CC}_{n,1}(\bA),X/G)$ considered
in Example \ref{homstacks}.  More generally, in the case that $X \qu
G$ has orbifold singularities, $\M_{n,1}^G(\bA,X)$ is an open substack
of $\Hom_{\ovl{\MM}_{n,1}^{\tw}(\bA)}(\ovl{\CC}^{\tw}_{n,1}(\bA),X/G)$
where $\ovl{\MM}_{n,1}^{\tw}(\bA)$ is defined in Definition \ref{stacksofschemes}. 

\begin{definition} \label{nodalaffinegauged} {\rm (Nodal affine gauged maps)}  
A {\em nodal $n$-marked affine gauged map} to $X$ over a scheme $S$
consists of a nodal marked scaled affine curve $ C = (C,
\lambda,z_0,\ldots,z_n)$ over $S$, possibly twisted at the nodes of
infinite scaling and the root marking, and a representable morphism
$u: C \to X/G$.  In addition we require that
\begin{enumerate}
\item (Root marking is target-stable) $u(z_0) \in I_{X \qu G}$;
\item (Infinite area components are target-stable) on any component
  such that $\lambda$ is infinite, $u$ takes values in the stable
  locus $X \qu G$;
\item (Zero area components are bundle-stable) the bundle is stable,
  hence trivializable, on the locus on which the scaling is zero.
\end{enumerate} 
A {\em morphism} of affine gauged maps $(C,\lambda,\ul{z},u: C \to
X/G)$ to $(C',\lambda',\ul{z}',u':C' \to X/G)$ is a morphism $\phi: C
\to C'$ of scaled curves from $(C,\lambda,\ul{z})$ to
$(C',\lambda',\ul{z}')$ such that $u = u' \circ \phi$.
The {\em homology class} of $u: C \to X/G$ is $u_* [C] \in
H_2^G(X,\Q)$ (integral in the absence of orbifold singularities on the
curve $C$).
A affine gauged map over $S$ is {\em stable} if every fiber
$u_s : C_s \to X$ admits only finitely many automorphisms, or
equivalently, every component on which $u$ has zero homology class has
at least three special points or two special points and a non-trivial
scaling.
\end{definition} 

Denote by $\ovl{\M}_{n,1}^G(\bA,X,d)$
resp. $\ovl{\MM}_{n,1}^G(\bA,X,d)$ the stack of stable
resp. not-necessarily stable scaled curves of genus zero and homology
class $d$ and by $\ovl{\M}_{n,1}^G(\bA,X)$ the sum over homology
classes.

\begin{theorem} $\ovl{\MM}_{n,1}^G(\bA,X,d)$ resp.   
$\ovl{\M}_{n,1}^G(\bA,X,d)$ is an Artin stack resp.  proper
  Deligne-Mumford stack. \end{theorem}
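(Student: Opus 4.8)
The plan is to establish the two assertions in turn: first that the nodal stack $\ovl{\MM}_{n,1}^G(\bA,X,d)$ is Artin, and then that the stability condition cuts out an open Deligne--Mumford substack $\ovl{\M}_{n,1}^G(\bA,X,d)$ which is proper. For the Artin claim I would argue exactly as in Theorem~\ref{artin}. The universal curve $\ovl{\CC}^{\tw}_{n,1}(\bA)\to\ovl{\MM}_{n,1}^{\tw}(\bA)$ over the twisted stack of nodal scaled affine lines is proper, so by Olsson~\cite[Lemma C.5]{aov:twisted} the hom-stack $\Hom_{\ovl{\MM}_{n,1}^{\tw}(\bA)}(\ovl{\CC}^{\tw}_{n,1}(\bA),X/G)$ is Artin. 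The stack $\ovl{\MM}_{n,1}^G(\bA,X,d)$ is then the locus inside its class-$d$ part where the conditions of Definition~\ref{nodalaffinegauged} hold: that $u$ take values in the semistable locus $X\qu G$ at the root marking and on the infinite-scaling components, and that the bundle be trivial where the scaling vanishes. Each is an open condition---semistability is open, and triviality of a bundle on a component is open as in Theorem~\ref{artin}---so $\ovl{\MM}_{n,1}^G(\bA,X,d)$ is an open substack and hence Artin.

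For $\ovl{\M}_{n,1}^G(\bA,X,d)$, stability in the sense of Definition~\ref{nodalaffinegauged} is precisely the vanishing of infinitesimal automorphisms on every fiber, so every geometric point has finite automorphism group; since the ground field has characteristic zero, the open substack $\ovl{\M}_{n,1}^G(\bA,X,d)$ is Deligne--Mumford by the criterion recalled in \cite[Remark 2.1]{ed:notes}. Finite type follows from boundedness: fixing the class $d$ bounds the isomorphism type of the $G$-bundle $P$ on $\P[1,r]$ and hence the space of sections of $P(X)$ of class $d$, as in the quasimap description~\eqref{Xd}.

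It remains to prove properness, and here I would use the valuative criterion over a discrete valuation ring $R$ with fraction field $K$. Separatedness follows from stability in the usual way: an isomorphism between two extensions agreeing over $\Spec K$ extends uniquely over the closed point because the special fibers have no infinitesimal automorphisms. For universal closedness I would factor through the forgetful morphism $\ovl{\M}_{n,1}^G(\bA,X,d)\to\ovl{\M}_{n,1}(\bA)$ collapsing $u$ and stabilizing, as in Corollary~\ref{gaugedforget}; since $\ovl{\M}_{n,1}(\bA)$ is a proper variety by Example~\ref{stacksofschemes}, it suffices to complete a family relative to a fixed limiting scaled curve. On the components where $u$ lands in the proper Deligne--Mumford quotient $X\qu G$---the infinite-scaling components and the root---the limit exists by the properness of twisted stable maps \cite[4.2]{agv:gw}, while on the finite- and zero-scaling part the degree bound forces a limiting section to exist, after a finite base change of $R$ and the insertion of bubble components absorbing any concentrating energy.

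The step I expect to be the main obstacle is precisely this completion on the affine, finite-scaling part, where $u$ may approach the unstable locus of $X/G$ and where the limit of the scaling $\lambda$ must itself be controlled. One must show that such a degeneration is resolved by bubbling off components carrying the appropriate scaling, and that in the limit the form $\lambda$ remains a genuine scaling in the sense of Example~\ref{stacksofschemes}\eqref{curveswithscalings}, with the monotonicity and order-two pole conditions of Definition~\ref{nodalaffinegauged} preserved. I would handle the simultaneous limits of the curve, the bundle $P$, the section $u$, and the scaling $\lambda$ by transporting the properness of $\ovl{\M}_{n,1}(\bA)$ through the forgetful map and appealing to Gromov compactness in the fiber direction---equivalently, to the Quot-scheme completeness underlying the convergence statements of Section~\ref{converge}.
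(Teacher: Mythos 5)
Your treatment of the first assertion matches the paper's: the hom-stack $\Hom_{\ovl{\MM}_{n,1}^{\tw}(\bA)}(\ovl{\CC}^{\tw}_{n,1}(\bA),X/G)$ is Artin by Olsson's result since the universal twisted scaled curve is proper, the defining conditions of Definition \ref{nodalaffinegauged} are open, and finite automorphism groups give the Deligne--Mumford property. That part is fine.

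The properness argument, however, has a genuine gap, and it sits exactly where you flag ``the main obstacle.'' You propose the valuative criterion, reducing via the forgetful map to a fixed limiting scaled curve and then completing the family fiberwise; but on the affine (finite- and zero-scaling) components the target is the non-separated, non-proper stack $X/G$, and the conditions that $u(z_0)\in X\qu G$, that $u$ land in the semistable locus on infinite-scaling components, and that the bundle be trivial on the zero-scaling locus are all \emph{open}, not closed. A limit produced by ``Quot-scheme completeness'' or a degree bound can perfectly well degenerate into the unstable locus or into a nontrivial bundle on the zero-scaling part, and nothing in your sketch rules this out or produces the semistable modification (after base change and bubbling) that would repair it. The phrases ``the degree bound forces a limiting section to exist'' and ``bubble components absorbing any concentrating energy'' are precisely the content that needs proof; the analogue of Langton/semistable reduction for affine gauged maps is not supplied by Gromov compactness for maps to a proper target, because $X/G$ is not one.

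The paper sidesteps this entirely: by Proposition \ref{coarseproper}, properness of a Deligne--Mumford stack over $\C$ is equivalent to compactness (and Hausdorffness) of its coarse moduli space in the analytic topology, and Theorem \ref{coarseaffine} identifies that coarse space homeomorphically with the moduli space of affine symplectic vortices via the Hitchin--Kobayashi correspondence and the heat flow of \cite{venu:heat}. Compactness of the vortex moduli space is then the analytic compactness theorem \cite[Theorem 3.20]{qk1}, where the bubbling, energy quantization, and removal-of-singularities statements you would need are actually established. If you want to keep a purely algebraic valuative-criterion proof, you would need to develop a semistable reduction theorem for affine gauged maps from scratch; as written, your argument assumes the conclusion at the critical step.
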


\begin{proof}  It follows from Example \ref{homstacks} that 
the hom-stack
$\Hom_{\ovl{\MM}_{n,1,\Gamma}^{\tw}(\bA)}(\ovl{\CC}^{\tw}_{n,1,\Gamma}(\bA),X/G)$
is an Artin stack, since $\ovl{\cC}_{n,1,\Gamma}^{\tw}(\bA) \to
\ovl{\MM}_{n,1,\Gamma}^{\tw}(\bA)$ is proper and $X$ is smooth.  The
conditions defining $\ovl{\M}_{n,1}^G(\bA,X/G,d)$ (values in the
semistable locus where $\lambda = \infty$) are open and so
$\ovl{\M}_{n,1}^G(\bA,X/G,d)$ is an open substack of
$\Hom_{\ovl{\MM}_{n,1,\Gamma}^{\tw}(\bA)}(\ovl{\CC}_{n,1,\Gamma}^{\tw}(\bA),X/G)$.
Furthermore, by assumption $G$ acts freely on the semistable locus in
$X$ and so $\ovl{\M}_{n,1}^G(\bA,X/G,d)$ has finite automorphism
groups, and so is Deligne-Mumford.  Properness is equivalent to
properness of the underlying coarse moduli space by Proposition
\ref{coarseproper}.  This in turn follows from the compactness
\cite[Theorem 3.20]{qk1} and Theorem \ref{coarseaffine} below.
\end{proof}

\begin{theorem}  \label{coarseaffine} Suppose that $X$ is either
a smooth polarized projective $G$-variety or a polarized vector space
with linear action of $G$ and proper moment map.  The coarse moduli
space of $\ovl{\M}_{n,1}^G(\bA,X)$ is homeomorphic to the moduli space
of affine symplectic vortices $\ovl{M}_{n,1}^K(\bA,X)$.
\end{theorem}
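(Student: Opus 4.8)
The plan is to follow the proof of Theorem \ref{coarse} essentially verbatim, since the affine moduli space is assembled from the same local pieces and the only genuinely new ingredient is the scaling $\lambda$. First I would show that the assignment $Z$ sending a stable affine gauged map to its associated affine vortex is a bijection, arguing component by component. On the principal component, where $\lambda$ is finite and nonzero, the scaling fixes an area form on the affine line $\bA$, and the affine analogue of Mundet's Hitchin--Kobayashi correspondence (cf.\ Theorem \ref{corr}), established in the symplectic setting of \cite{qk1}, identifies the Mundet-semistable pairs $(P,u)$ with affine symplectic vortices up to complex gauge equivalence. On the bubble components with zero scaling the bundle is trivial (condition (iii) of Definition \ref{nodalaffinegauged}) and $u$ reduces to a stable holomorphic sphere in $X$; on the components with infinite scaling $u$ takes values in the stable locus (condition (ii)) and descends to a stable map into $X \qu G$, the twisting at the infinite-scaling nodes accounting for the orbifold structure. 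Matching these three types of data reproduces exactly the stable affine vortices compactified in \cite[Theorem 3.20]{qk1}, so $Z$ is a set-theoretic bijection.

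Next I would prove that $Z$ is continuous. As in Theorem \ref{coarse}, the topology on the coarse moduli space of $\ovl{\M}_{n,1}^G(\bA,X)$ is that of specialization in the (uni)versal families of Section \ref{converge}: a convergent sequence $[(C_\nu,\lambda_\nu,u_\nu)] \to [(C,\lambda,u)]$ is realized by an analytic family over a connected base $S$ with $s_\nu \to s$. Fixing a reduction of structure group to the maximal compact $K$ and using the correspondence between holomorphic structures and connections yields a family $(A_s \in \A(P), u_s)$ on a fixed $K$-bundle with $A_{s_\nu} \to A$ uniformly in all derivatives, $u_{s_\nu}$ Gromov converging to $u_s$, and scalings $\lambda_{s_\nu} \to \lambda_s$. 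Consequently the combined curvature-plus-moment-map term $F_{A_{s_\nu}} + u_{s_\nu}^* P(\Phi)\,\Vol$ (with $\Vol$ the area form determined by $\lambda$) converges in $L^p$, $p > 2$, and uniformly on compact subsets of the complement of the bubbling set. Letting $\xi_{A,u}$ denote the unique minimizer of the Mundet functional $\cI(P)$ of \eqref{mundetfun}, the implicit function theorem produces gauge transformations $\exp(i\xi'_\nu)$ with $\xi'_\nu \to 0$ in $W^{1,p}$ putting each sequence member into vortex form; hence $\xi_{A_{s_\nu},u_{s_\nu}} \to \xi_{A,u}$ in $W^{1,p} \hookrightarrow C^0$, which gives continuity of $Z$.

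Continuity of the inverse $\ovl{M}_{n,1}^K(\bA,X) \to \ovl{\M}_{n,1}^G(\bA,X)$ would follow, exactly as in Theorem \ref{coarse}, from the fact that $\ovl{\M}_{n,1}^G(\bA,X)$ is a coarse moduli space for $C^0$ families. Its construction by the Quot-scheme and Givental-type methods recalled in Section \ref{converge} provides, for each bundle-with-section on a scaled curve, a point in a Grassmannian depending continuously on the connection, the curve, and the scaling, so that analytic convergence of vortices translates back into convergence of the associated algebraic data. Having shown $Z$ continuous with continuous inverse, it is a homeomorphism.

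I expect the main obstacle to lie in the degeneration of the scaling. When $\lambda_\nu$ runs to $0$ or $\infty$ on a component, the affine vortex equation itself degenerates and energy concentrates, so the implicit-function argument of the second paragraph breaks down near the bubbling set. Reconciling the analytic bubbling, governed by the compactness theorem \cite[Theorem 3.20]{qk1}, with the algebraic degeneration of the scaled curve---in particular ensuring that the limiting bubble tree carries the correct scalings ($0$ resp.\ $\infty$ on the appropriate components) together with the correct twisting at the infinite-scaling nodes---is the delicate point. This is precisely where the monotonicity condition on scalings and the target-stability conditions of Definition \ref{nodalaffinegauged} must be invoked, to exclude spurious limiting components and to force the two compactifications to agree.
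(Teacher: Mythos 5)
Your overall architecture matches the paper's proof: a set-theoretic bijection $Z$ via a Hitchin--Kobayashi-type correspondence, continuity of $Z$ via convergence of the minimizing complex gauge transformations in $W^{1,p}\hookrightarrow C^0$ using the implicit function theorem away from the bubbling set, and continuity of $Z^{-1}$ from the coarse-moduli-space property for $C^0$ families established by Quot-scheme methods. The paper's version of the last step is slightly more explicit than yours (it pulls the family of holomorphic bundles back from a universal deformation and identifies the Gromov limit of the sections with the algebraic specialization of stable maps in $P^{\univ}(X)$), but that is the same idea.

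The one genuine gap is in your first step. You invoke ``the affine analogue of Mundet's Hitchin--Kobayashi correspondence (cf.\ Theorem \ref{corr}), established in the symplectic setting of \cite{qk1}.'' Theorem \ref{corr} is a statement about a \emph{compact} curve $C$; its proof rests on convexity and properness properties of the functional $\cI(P)$ of \eqref{mundetfun} on $\G(P)/\K(P)$ that are not available verbatim on the noncompact domain $\bA \cong C \ssm \{z_0\}$, and the affine correspondence is not proved in \cite{qk1}. The paper's actual mechanism is different: one first complex-gauge-transforms $(A,u)$ (by the implicit function theorem) to a pair solving the vortex equation outside a large ball, and then applies the Yang--Mills heat flow for gauged maps of \cite{venu:heat} to flow to a genuine affine vortex, with uniqueness up to unitary gauge coming from convexity of Mundet's functional; the resulting classification of affine vortices is the content of \cite{venuwood:class}. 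Without this (or some substitute existence argument on the noncompact domain, including the removal-of-singularity at infinity forced by the condition $u(z_0)\in X\qu G$), the bijectivity of $Z$ is not established. Your closing paragraph correctly senses that the degenerating scaling is delicate, but the harder missing ingredient is the existence half of the affine correspondence itself.
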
 

\begin{proof}   This is mostly proved in \cite{venuwood:class} using the heat flow for gauged
maps in \cite{venu:heat}.  We sketch the proof: Any morphism $u: C \to
X/G$ with $u(z_0) \in X \qu G$ determines, by restriction, a pair
$(A,u)$ on $\bA \cong C - \{ z_0 \}$ taking values in the semistable
locus, which can be complex-gauge-transformed (using the implicit
function theorem) to a pair satisfying the vortex equations outside of
a sufficiently large ball.  The heat flow for gauged maps provides a
complex gauge transform to a symplectic vortex; the convexity of
Mundet's functional implies that the complex gauge transformation is
unique up to unitary gauge transformation.  Let $ C \to S, u : C \to
X/G, \lambda,\ul{z}: S \to C^n$ be a family of stable affine gauged
maps and $s_0 \in S$.  After restricting to a neighborhood of $s_0$ we
may assume that the bundles are obtained from a fixed principal
$K$-bundle on $C_{s_0}$ and family of connections on $C_{s_0}$ via
gluing.  For each $s \in S$, there is a unique-up-to-unitary gauge
transformation $g_s \in \G(P)$ such that $g_s(A_s,u_s)$ is a vortex,
obtained as the minimum of a functional $\ti{\psi}_s$ obtained by
integrating the moment map.  We have $F_{g_s(A_s,u_s)} \to F_{
  g_{s}(A_{s_0},u_{s_0})_j}$ as $s \to s_0$ in $L^p$, by convergence
away from the bubbling set, for any component $(A_{s_0},u_{s_0})_j$ of
$(A_{s_0},u_{s_0})$ with finite scaling.  By the implicit function
theorem $g_s$ converges to $g_{s_0}$ in a suitable Sobolev $1,p$-space
for $p > 2$, hence in $C^0$.  Continuity of the inverse map
$\ovl{M}_{n,1}^K(\bA,X) \to \ovl{M}_{n,1}^G(\bA,X)$ follows from the
fact that $\ovl{M}_{n,1}^G(\bA,X)$ is a coarse moduli space for $C^0$
families of gauged maps, by its construction via Quot scheme methods
as in Section \ref{converge}.  Let $(C_s,P_s,A_s,u_s)$ be a family of
nodal affine vortices over a topological space $S$.  $(C_s,P_s,A_s)$
defines a continuous family of holomorphic bundles, denoted
$(C_s,P_s^\C)$.  Any such bundle is the pull-back of the universal
deformation $P^{\univ} \to C^{\univ}$ of $(C_{s_0},P_{s_0}^\C)$ by
some continuous map $S \to S^{\univ}$, where $P_{s_0}^\C$ is the
holomorphic bundle defined by $A_0$.
Consider $u_s$ as a continuous family of holomorphic maps to
$P^{\univ}(X)$, with Gromov limit $u_0: C_0 \to P^{\univ}(X)$.  The
latter is also the limit in the algebraic sense of the maps $u_s$,
that is, the limit of the corresponding points $[u_s]$ in the moduli
space of stable maps to $P^{\univ}(X)$.  Taking the universal
deformation of $u_0$ realizes $u_0$ as an algebraic specialization of
$u_s$, which shows that that map $\ovl{M}_{n,1}^K(\bA,X) \to
\ovl{M}_{n,1}^G(\bA,X)$ is continuous.
\end{proof} 

Following Behrend-Manin \cite{bm:gw} in the case of stable maps, we
show that the moduli stacks of affine gauged maps are functorial for
suitable morphisms of $G$-varieties.

\begin{proposition} \label{affcollapse}  There is a canonical morphism 
$\ovl{\M}_{n,1}^{G,\pre}(\bA,X) \to \ovl{\M}_{n,1}^G(\bA,X)$, given by
  (recursively) collapsing unstable components.
\end{proposition}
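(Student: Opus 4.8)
The plan is to construct the collapsing morphism by the same relative $\Proj$ construction used for scaled curves in Example \ref{stacksofschemes} and for gauged maps in Lemma \ref{collapse1}, now combining the scaling twist with the pull-back of the polarization. Recall that $\ovl{\M}_{n,1}^{G,\pre}(\bA,X)$ consists of nodal affine gauged maps $(C,\lambda,\ul{z},u)$ satisfying the target conditions of Definition \ref{nodalaffinegauged} but without the requirement of finite automorphisms, so the section $u$ may have unstable components --- components on which $u$ has zero homology class carrying fewer than three special points, or fewer than two together with a nondegenerate scaling. The morphism should contract precisely these components.

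Concretely, given a family $\pi : C \to S$ with scaling $\lambda$, markings $\ul{z}$, bundle $P$, and section $u : C \to P(X)$, I would form
\[
C^{st} = \Proj \bigoplus_{m \ge 0} \pi_*\!\left( \left(\omega_{C/S}^\lambda[z_1 + \ldots + z_n] \otimes u^* P(L)^{\otimes 3}\right)^{\otimes m}\right),
\]
where $\omega_{C/S}^\lambda[z_1+\ldots+z_n]$ is the scaling-twisted relative dualizing sheaf of \eqref{Cst} and $P(L) \to P(X)$ is the line bundle induced by the ample $G$-linearization $L \to X$, exactly as in \eqref{sigma}. On a component of zero homology class with too few special points the restriction of $\omega_{C/S}^\lambda[z_1+\ldots+z_n] \otimes u^* P(L)^{\otimes 3}$ has non-positive degree, so that component is contracted, while $u$ factors through $C^{st}$ because $u^*P(L)$ is trivial there. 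Condition (i) of Definition \ref{nodalaffinegauged} guarantees that the root marking $z_0$ lies on a non-contracted component, and conditions (ii), (iii) guarantee that infinite- and zero-scaling components are never contracted, so that the scaling $\lambda$, the monotonicity condition, and the order-two pole structure all descend to $C^{st}$.

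As in the scaled-curve discussion following \eqref{Cst}, a single application of $\Proj$ does not suffice: contracting a colored bubble may destabilize the adjacent component, so the construction must be applied recursively. I would argue that, since each fiber has finitely many components and each application strictly decreases their number, the recursion terminates after finitely many steps in a stable affine gauged map, producing the desired object of $\ovl{\M}_{n,1}^G(\bA,X)$. For families obtained by forgetting a marking from a stable family the formation of $C^{st}$ commutes with base change and effects the collapse of the unstable components, and the general case reduces to this one by adding markings locally, following Behrend-Manin \cite{bm:gw}; this simultaneously shows that the assignment is well-defined, canonical, and a morphism of stacks rather than merely a map on geometric points.

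The main obstacle is the bookkeeping for the recursion in the presence of scalings: one must verify that the iterated contraction is independent of the order in which bubbles are collapsed, that the monotonicity and pole conditions of Definition \ref{nodalaffinegauged} are \emph{restored} after each step rather than merely preserved, and that the whole procedure commutes with base change. As in the previous collapsing lemmas, all of these reduce to the universal situation of forgetting a single marking, where the contraction is governed by the explicit $\Proj$ formula above; the Mundet and target-stability conditions play no role in the recursion, since they concern only the principal component, which is never contracted.
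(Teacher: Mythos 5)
Your proposal follows essentially the same route as the paper: the relative $\Proj$ of the scaling-twisted dualizing sheaf tensored with the cube of the pulled-back polarization, applied more than once (the paper notes that, as for \eqref{Cst}, exactly two applications suffice), with the construction verified in the case of forgetting a marking from a stable family and the general case handled by adding markings locally as in Behrend--Manin \cite{bm:gw}. The only point you omit is the case where $X \qu G$ has orbifold singularities and the domain carries twistings at the nodes with infinite scaling, for which the paper takes the $\Proj$ relative to the target stack as in \cite[Section 9]{abramovich:compactifying}.
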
 

\begin{proof} 
Given a family $u: C \to X/G$ of affine gauged maps to $X$ and
an ample $G$-line bundle $L \to X$ define
$$ C^{st} = \Proj \bigoplus_{n \ge 0} \pi_* (\omega_{C/S}^\lambda[z_1
+ \ldots + z_n] \otimes u^*L^3)^{\otimes n}. $$
The map $u$ factors through $C^{st}$ and commutes with base change, in
the case that the family arises from forgetting a marking from a
stable family by the similar arguments to those in Behrend-Manin
\cite{bm:gw}.  As in the case of \eqref{Cst}, it is necessary to
perform this construction {\em twice} in order to produce a stable
affine gauged map.  The general case reduces to this one, by adding
markings locally.  The orbifold case is as in \cite[Section
  9]{abramovich:compactifying}, by taking the proj relative to the
target stack.
\end{proof}  

Recall the category of {\em smooth polarized $G$-varieties} from
Definition \ref{categ}.

\begin{corollary} $X \mapsto \ovl{\M}_{n,1}^G(\bA,X)$ extends to a functor 
from the category of smooth polarized $G$-varieties to Deligne-Mumford
stacks.
\end{corollary}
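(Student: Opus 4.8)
The plan is to mirror the proof of functoriality for $\ovl{\M}_n^G(C,X)$ given above, replacing the collapsing morphism of Lemma \ref{collapse1} by that of Proposition \ref{affcollapse}. Fix a morphism $(\varphi,\psi,\iota)$ from $(G_0,X_0,L_0)$ to $(G_1,X_1,L_1)$ in the category of Definition \ref{categ}, so that $\psi: G_0 \to G_1$ is surjective with right inverse $\iota$, $\varphi: X_0 \to X_1$ is $\psi$-equivariant, and Hilbert-Mumford weights are preserved. On objects I would send a nodal affine gauged map $(C,\lambda,\ul{z},u_0: C \to X_0/G_0)$ to $(C,\lambda,\ul{z},u_1: C \to X_1/G_1)$, where $u_1$ is obtained by composing $u_0$ with the morphism of quotient stacks $X_0/G_0 \to X_1/G_1$ induced by $(\varphi,\psi)$; concretely the underlying bundle is pushed forward $P_0 \mapsto P_0 \times_{G_0} G_1$ and the section is composed with $\varphi$. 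The scaled curve $(C,\lambda,\ul{z})$, together with its twisting at nodes of infinite scaling and at the root marking, is left unchanged.

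First I would check that this lands in the prestable Mundet-semistable locus $\ovl{\M}_{n,1}^{G_1,\pre}(\bA,X_1)$, i.e. that the three conditions of Definition \ref{nodalaffinegauged} are preserved. By the Remark following Definition \ref{categ} the semistable locus of $X_0$ maps to that of $X_1$, so the infinite-area components continue to take values in the stable locus and the root marking continues to evaluate into $I_{X_1 \qu G_1}$, where one uses that $\varphi$ induces a map of inertia stacks $I_{X_0 \qu G_0} \to I_{X_1 \qu G_1}$. Since $G_0$ is the product of $G_1$ with $\ker\psi$, a bundle that is trivial on a zero-area component pushes forward to a trivial $G_1$-bundle, preserving bundle-stability there. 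On any component with finite nonzero scaling the Mundet weight $\mu(\sigma,\lambda)$ is unchanged, because parabolic reductions of $P_0 \times_{G_0} G_1$ pull back to parabolic reductions of $P_0$, the associated graded of the pushed-forward bundle is the pushforward of the associated graded, and the $\mu_X$ term is preserved by the weight-matching hypothesis, exactly as in the earlier argument.

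Next, composing with the collapsing morphism of Proposition \ref{affcollapse} yields a morphism $\ovl{\M}_{n,1}^{G_0}(\bA,X_0) \to \ovl{\M}_{n,1}^{G_1}(\bA,X_1)$ landing in the stable locus. Functoriality — compatibility with identities and with composition of morphisms in the category of Definition \ref{categ} — is then immediate from the corresponding compatibilities of bundle pushforward, of composition with $\varphi$, and of the collapsing construction, just as in the proof for $\ovl{\M}_n^G(C,X)$; since the morphisms are only determined up to $2$-isomorphism, it suffices that the two constructions agree up to equivalence.

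The main obstacle I anticipate is the orbifold/twisted bookkeeping at the root marking and at the nodes of infinite scaling: one must verify that the pushed-forward morphism $u_1: C \to X_1/G_1$ remains \emph{representable} in the sense required for twisted curves, that is, that the induced maps on the automorphism groups of the stacky points stay injective. This is precisely where the hypothesis that $\varphi$ preserves Hilbert-Mumford weights does the essential work, since it controls the isotropy of the image points in $X_1 \qu G_1$. The remaining verifications — that the construction commutes with base change and that the homology class transforms correctly under $H_2^{G_0}(X_0,\Q) \to H_2^{G_1}(X_1,\Q)$ — are routine.
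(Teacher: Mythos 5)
Your proposal is correct and follows essentially the same route as the paper: compose $u$ with the induced morphism of quotient stacks $X_0/G_0 \to X_1/G_1$ to land in $\ovl{\M}_{n,1}^{G_1,\pre}(\bA,X_1)$, then apply the collapsing morphism of Proposition \ref{affcollapse}. The additional verifications you supply (preservation of the three conditions of Definition \ref{nodalaffinegauged}, representability at the twisted points) are left implicit in the paper's two-line proof but are consistent with it.
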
  

\begin{proof}   Given morphisms $\phi: X_0 \to X_1, G_0 \to G_1$
we obtain a morphism from $\ovl{\M}_{n,1}^{G_0}(\bA,X_0)$ to
$\ovl{\M}_{n,1}^{G_1,\pre}(\bA,X_1)$ by composing $u$ with $\phi$.
Composing with the collapsing morphism \ref{affcollapse} gives the
required morphism of moduli stacks.
\end{proof} 


\begin{example}  \label{jtgen}  {\rm (Affine gauged maps in the toric case)} 
 Suppose that $G$ is a torus and $X$ a vector space with weights
 $\mu_1,\ldots,\mu_k$.  Then $\M^G_{1,1}(\bA,X) =
 \Hom(\bA,X)^{\sss}/G$ where $\Hom(\bA,X)^{\sss}$ is the space of
 morphisms from $\bA$ to $X$ that are generically semistable, that is,
 $u: \bA \to X$ such that $u^{-1}(X^{\sss}) \subset \bA$ is non-empty.
\begin{enumerate} 
\item (Projective space quotient) If $X = \C^k$ with $G = \C^\times$ acting
  diagonally, the component of homology class $d \in H_2^G(X,\Z) \cong
  \Z$ is
$$\Hom(\bA,X,d)^{\sss}/G = \left\{  \sum_{e \leq d} (a_{e,1},\ldots,a_{e,k}) z^e
\ \left| \ (a_{d,1},\ldots,a_{d,k}) \neq 0 \right. \right\}/G $$
For example, if $d = 1$ and $k = 2$ then 
$$\M_{1,1}^G(\bA,X,1) \cong \{ z \mapsto (a_{1,1} z + a_{0,1} , a_{1,2} z
+ a_{0,2}), (a_{1,1},a_{1,2}) \neq 0 \} / G$$
is the total space of $\mO_{\P}(1)^{\oplus 2}$.  Its boundary is
isomorphic to $\ovl{\M}_{0,2}(\P,[\P]) \cong \P^2$, the moduli space of
twice-marked stable maps of degree $[\P]$, by the map which attaches a
trivial affine gauged map at the marking $z_1$.
%
\item (Point quotient) The case of $X = \C$ is studied from the point
  of view of vortices in Taubes \cite[Theorem 1]{taubes:arb} and
  Jaffe-Taubes \cite{jt:vm}.  To describe this classification, let
  $\Sym^d(\bA) = \bA^d/S_d$ denote the symmetric product.  The
  references \cite{}, \cite{jt:vm} show that the map
$$\M_{1,1}^G(\bA,X,d) \to \Sym^d(\bA), \ [u] \mapsto u^{-1}(0)$$ 
is a homeomorphism on coarse moduli spaces, which is obvious from the
algebraic description given here.
\item (Weighted projective line quotient) The following is an example
  with orbifold singularities in the quotient $X \qu G$.  Let $\C_2$
  resp. $\C_3$ denote the weight space for $G_\C = \C^\times$ with weight
  $2$ resp. $3$ so that $X = \C_2 \oplus \C_3$ and $X \qu G =
  \P[2,3]$.  Identifying $H_2^G(X,\Q) \cong \Q$ so that $H_2^G(X,\Z)
  \cong \Z$ we see that for complex numbers $a_0,b_0,a_1,b_1,\ldots$
\begin{eqnarray*}
 \M_{1,1}^G(\bA,X,0) &=& \{(a_0,b_0) \neq 0 \}/G \cong \P[2,3] \\
 \M_{1,1}^G(\bA,X,1/3) &=& \{ (a_0, b_1 z + b_0), b_1 \neq 0 \}/G \cong
 \C^2/\Z_3 \\
 \M_{1,1}^G(\bA,X,1/2) &=& \{ (a_1z + a_0, b_1 z + b_0 ), a_1 \neq 0 \}/G \cong \C^3/\Z_2\\
 \M_{1,1}^G(\bA,X,2/3) &=& \{ (a_1 z + a_0, b_2 z^2 + b_1 z + b_0), b_2 \neq 0 \}/G 
\cong \C^4/\Z_3 \\
 \M_{1,1}^G(\bA,X,1) &=& \{ (a_2 z^2 + a_1 z + a_0, b_3 z^3 + b_2 z^2 + b_1
 z + b_0), (a_2,b_3) \neq 0 \} / G.
\end{eqnarray*}
\end{enumerate} 
\end{example} 

The stacks $\ovl{\M}^G_{n,\Gamma}(\bA,X)$ satisfy functoriality with
respect to morphisms of colored trees, similar to Proposition
\ref{cutcollapse}, with the caveat that because we allow stacky points
in the domain, in the case that $X \qu G$ is only locally free, the
gluing maps will not be isomorphisms:

\begin{proposition}  \label{cutcollapse3}  Any morphism of colored trees
$\Ups: \Gamma \to \Gamma'$ induces a morphism of moduli spaces
$$ \ovl{\MM}(\Ups,X): \ovl{\MM}_{n,1,\Gamma}^G(\bA,X) \to
  \ovl{\MM}_{n,1,\Gamma'}^G(\bA,X), \quad \ovl{\M}(\Ups,X):
  \ovl{\M}_{n,1,\Gamma}^G(\bA,X) \to \ovl{\M}_{n,1,\Gamma'}^G(\bA,X) .$$
In particular, 
\begin{enumerate} 
\item {\rm (Cutting an edge or edges with relations)} If $\Ups:
  \Gamma' \to \Gamma $ is a morphism corresponding to cutting an edge
  of $\Gamma$, then there is a gluing morphism
\begin{equation} \label{glue2} \mathcal{G}(\Ups,X): \ovl{\M}^{G}_{n,1,\Gamma'}(\bA,X)
  \times_{\ovl{I}_{X/G}^{2m}} \ovl{I}_{X/G}^m \to
  \ovl{\M}^{G}_{n,1,\Gamma}(\bA,X)
\end{equation}  
where $m$ is the number of cut edges and the second morphism is the
diagonal 
$$\Delta: \ovl{I}_{X/G}^m \to \ovl{I}_{X/G}^{2m} $$ 
which is an isomorphism in the absence of stacky points in the domain,
that is, if $X \qu G$ is a variety, and in general is an isomorphism
after passing to finite covers.  The morphism $\ovl{\M}(\Ups,X)$ is
given by projection on the first factor.
\item {\rm (Collapsing an edge)} If $\Gamma'$ is obtained from
  $\Gamma$ by collapsing an edge then there is an isomorphism
$$\ovl{\M}^{G}_{n,1,\Gamma'}(\bA,X) \times_{\ovl{\MM}_{n,1,\Gamma'}(\bA)}
  \ovl{\MM}_{n,1,\Gamma}(\bA) \to \ovl{\M}^G_{n,1,\Gamma}(\bA,X)$$
and $\ovl{\M}(\Ups,X)$ is given by projection on the first factor.
\end{enumerate} 
\end{proposition}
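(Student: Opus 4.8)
The plan is to adapt the strategy used for twisted stable maps in Proposition \ref{cutcollapse} to the scaled affine setting. Since every morphism of colored trees factors as a composition of the elementary morphisms --- collapsing an edge, collapsing edges with relations, cutting an edge, cutting an edge with relations, and forgetting a tail --- as in Behrend-Manin \cite{bm:gw}, it suffices to construct $\ovl{\MM}(\Ups,X)$ and $\ovl{\M}(\Ups,X)$ for each elementary type and to verify compatibility under composition. On the nodal level I would construct $\ovl{\MM}(\Ups,X)$ directly; the stable version $\ovl{\M}(\Ups,X)$ is then obtained by composing the inclusion of the stable locus, the nodal morphism, and the collapsing morphism of Proposition \ref{affcollapse}, exactly as the forgetful morphism for stable maps is defined. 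In particular the forgetting-a-tail case is immediate from this recipe, so the content lies in the collapsing and cutting cases (the ``with relations'' variants being handled by the same constructions with the scaling data tracked along the edges).

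For the collapsing case (b) I would begin from the morphism of stacks of scaled curves $\ovl{\MM}_{n,1,\Gamma}(\bA) \to \ovl{\MM}_{n,1,\Gamma'}(\bA)$ supplied by Example \ref{stacksofschemes}. Collapsing an edge affects only the combinatorial type recorded for the domain, not the target $X/G$ nor the representable morphism $u$; consequently a type-$\Gamma$ gauged map is determined by its underlying type-$\Gamma'$ gauged map together with the datum exhibiting its domain as a curve of type $\Gamma$. This is exactly the fiber product $\ovl{\M}^{G}_{n,1,\Gamma'}(\bA,X) \times_{\ovl{\MM}_{n,1,\Gamma'}(\bA)} \ovl{\MM}_{n,1,\Gamma}(\bA)$, by the same argument as in Proposition \ref{cutcollapse}(b) and Proposition \ref{cutcollapse2}(b); the projection to the first factor then defines $\ovl{\M}(\Ups,X)$. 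One must check that the semistability, scaling, and monotonicity conditions of Definition \ref{nodalaffinegauged} are preserved under this identification, but this is a local condition at the collapsed node and follows from the corresponding compatibility of the curve-level collapsing morphism.

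For the cutting case (a) the essential ingredient is the evaluation map at a node. At a (possibly twisted) node the domain carries a gerbe $B\mu_r$, and restricting $u$ yields a representable morphism $B\mu_r \to X/G$, that is, an object of the inertia stack $I_{X/G}$; after rigidification this produces evaluation maps to $\ovl{I}_{X/G}$, as in Abramovich-Graber-Vistoli \cite{agv:gw}. Cutting each of the $m$ edges separates its node into two markings carrying inverse sectors, and the gluing morphism $\mathcal{G}(\Ups,X)$ reattaches them: a type-$\Gamma'$ gauged map whose evaluations at the paired markings match --- that is, a point of the fiber product over the diagonal $\Delta: \ovl{I}_{X/G}^m \to \ovl{I}_{X/G}^{2m}$ --- is glued into a type-$\Gamma$ gauged map. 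When $X \qu G$ is a variety the paired markings carry no stacky structure, the diagonal is an isomorphism, and $\mathcal{G}(\Ups,X)$ is an isomorphism exactly as in the ordinary stable-map case.

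The hard part will be the orbifold case, where $X \qu G$ is only locally free. Then the node of the glued curve acquires a gerbe band and the naive gluing fails to be an isomorphism, precisely paralleling the distinction in Proposition \ref{cutcollapse} between the morphism on $\ovl{\M}_{g,n,\Gamma}(\XX)$ (formulated with the rigidified $\ovl{I}_{\XX}$) and the genuine isomorphism on the framed stacks $\ovl{\M}^{\fr}_{g,n,\Gamma}(\XX)$ (formulated with $I_{\XX}$). I would resolve this by passing to framed moduli stacks, where sections trivialize the gerbes at the cut markings; over these finite covers the gluing is an isomorphism, and this exhibits $\mathcal{G}(\Ups,X)$ as an isomorphism after passing to finite covers. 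The delicate bookkeeping --- matching the monodromy $\zeta$ on one side of a node with $\zeta^{-1}$ on the other via the involution of $\ovl{I}_{X/G}$, and verifying that the reglued scaling still satisfies the monotonicity condition of Definition \ref{nodalaffinegauged} --- is where the real work lies; the remaining functoriality axioms for $\ovl{\MM}(\Ups,X)$ and $\ovl{\M}(\Ups,X)$ are then formal consequences of these constructions.
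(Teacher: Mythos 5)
Your proposal is correct and follows the same route as the paper, whose own proof is only two sentences: in the variety case the claims are immediate from the definitions, and in the orbifold case one passes to the framed stack $\ovl{\M}^{\fr,G}_{n,1,\Gamma}(\bA,X)$ of stable affine gauged maps with sections at the stacky points, exactly as you propose. Your write-up simply fills in the details (factorization into elementary tree morphisms, evaluation maps to $\ovl{I}_{X/G}$, the fiber-product description for collapsing) that the paper leaves implicit by analogy with Propositions \ref{cutcollapse} and \ref{cutcollapse2}.
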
 

\begin{proof}   In the case that $X \qu G$ is a variety, these claims are
immediate from the definitions.  In the case that $X \qu G$ is a
Deligne-Mumford stack, the gluing maps are isomorphisms after passing
to the stack $\ovl{\M}^{\fr,G}_{n,1,\Gamma}(\bA,X)$ of stable affine
gauged maps with sections at the stacky points.
\end{proof} 

\subsection{Scaled gauged maps} 

In this section, we construct moduli stacks of scaled maps with
projective domain.  These are used later to relate the gauged graph
potential of $X$ with the graph potential of the quotient $X \qu G$.
Let $X$ be a smooth projectively embedded $G$-variety and $C$ a smooth
connected projective curve.

\begin{definition} 
A {\em nodal scaled gauged map} from $C$ to $X$ consists of a twisted
nodal scaled $n$-marked curve $(\hat{C}, \ul{z}, \omega)$ as in
\cite[Definition 2.40]{qk1},
with orbifold structures only at the nodes with infinite scaling,
together with a morphism $\hat{C} \to C \times X/G $ consisting of a
bundle $P \to \hat{C}$ and a representable morphism $u: \hat{C} \to C
\times P(X)$.  Such a map is {\em stable} iff
\begin{enumerate} 
\item (Finite scaling) if 
the scaling $\omega$ is finite on the principal component then $u$ is
stable for the large area chamber;
\item (Infinite scaling) if the scaling $\omega$ is infinite on the
  principal component then $C$ admits a decomposition into
  not-necessarily-irreducible components $C = C_0 \cup \ldots \cup
  C_r$ where $u_0 = u | C_0 $ is an $r$-marked stable map $C_0 \to C
  \times (X \qu G)$ and $u_i = u| C_i : C_i \to C \times X/G$ are
  stable affine gauged maps.
\end{enumerate} 
A morphism of scaled Mundet-stable curves $(\hat{C},\lambda,\ul{z},u)$
to $(\hat{C}',\lambda',\ul{z}', u')$ is an isomorphism of the
underlying scaled curves $\phi: \hat{C} \to \hat{C}'$ intertwining the
scalings, markings, and morphisms.  A nodal gauged map is {\em stable}
if it is Mundet stable and has finitely many automorphisms, that is,
each non-principal component with non-degenerate scaling
resp. degenerate scaling has at least two resp. three special points.
\end{definition} 

Let $\ovl{\MM}_{n,1}^G(C,X)$ denote the stack of nodal
Mundet-semistable scaled gauged maps, and $\ovl{\M}_{n,1}^G(C,X)$ the
stack of semistable scaled gauged maps.  $ \ovl{\M}_{n,1}^G(C,X)$ is
the union of stacks $ \ovl{\M}_{n,1}^{G}(C,X)_{< \infty}$ consisting
of gauged vortices for large area chamber and a scaling on the
underlying curve, and a stack $\ovl{\M}_{n,1}^G(C,X,d)_\infty$
consisting of maps from $C$ to $X \qu G$ and collections of affine
maps to $X/G$.  By forgetting the affine maps one obtains a map to
\begin{equation} \label{union} \ovl{\M}_{n,1}^G(C,X,d)_\infty \to
  \ovl{\M}_{r}(C,X \qu G) .\end{equation}
The fiber over an element mapping to $(e_1,\ldots,e_r) \in I_{X \qu
  G}^r$ is isomorphic to the locus in $ \Pi_{j=1}^r
\ovl{\M}_{i_j,1}^G(\bA,X)$ with the given evaluations in $I_{X \qu
  G}^r$.\footnote{Equation \eqref{union} is corrected from the
  published version, where the isomorphism was mistakenly asserted to
  be canonical.}  Note that $ \ovl{\M}_{n,1}^G(C,X)$ contains
$\ovl{\M}_n^G(C,X)$ as the zero section.

\begin{proposition} \label{coarsescaled} 
 \label{scaledmaps} $\ovl{\MM}_{n,1}^G(C,X)$ is an Artin stack.
$\ovl{\M}_{n,1}^G(C,X)$ is an open substack equipped with a morphism
 $\rho: \ovl{\M}_{n,1}^G(C,X) \to \ovl{\M}_{0,1}(C) \cong \P$ with the
 property that there exist isomorphisms $ \ovl{\M}_n^G(C,X,d) \to
 \rho^{-1}(0) $ (with stability on the domain given by the large area
 chamber $\rho \to 0$) and $\ovl{\M}_{n,1}^G(C,X,d)_\infty \to
 \rho^{-1}(\infty) $.  The coarse moduli space of
 $\ovl{\M}_{n,1}^G(C,X)$ is homeomorphic to the moduli space of scaled
 vortices $\ovl{M}_{n,1}^K(C,X)$.
\end{proposition}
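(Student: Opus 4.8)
The plan is to dispatch the algebraic assertions with the hom-stack machinery already in place, read the two fibers of $\rho$ off the stability definition, and obtain the homeomorphism by reusing the analytic arguments of Theorems \ref{coarse} and \ref{coarseaffine}, with the compactness theorem of \cite{qk1} controlling the passage to infinite scaling. First I would realize $\ovl{\MM}_{n,1}^G(C,X)$ as an open substack of a hom-stack. The twisted nodal scaled $C$-parametrized curves form an Artin stack $\ovl{\MM}_{n,1}^{\tw}(C)$, obtained by combining the stack of curves with scalings (Example \ref{stacksofschemes} \eqref{curveswithscalings}) with log-twisting at the nodes of infinite scaling as in Olsson \cite{ol:logtwist}; its universal curve $\ovl{\CC}_{n,1}^{\tw}(C) \to \ovl{\MM}_{n,1}^{\tw}(C)$ is proper. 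Since $X$ is smooth and the target is the quotient stack $X/G$ of a scheme by a reductive group, the hom-stack $\Hom_{\ovl{\MM}_{n,1}^{\tw}(C)}(\ovl{\CC}_{n,1}^{\tw}(C),X/G)$ is Artin by Example \ref{homstacks} \eqref{homstoquotients}. The stack $\ovl{\MM}_{n,1}^G(C,X)$ is the locus where the bundle is trivial on every bubble component mapping to a point of $C$, an open condition exactly as in Theorem \ref{artin}; this gives the first assertion. Mundet semistability together with stability of the section are open (as in the discussion preceding Theorem \ref{qproper}), so $\ovl{\M}_{n,1}^G(C,X)$ is an open substack.

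Next I would build $\rho$ as a forgetful-and-collapse morphism: forget the bundle, section and markings, remember the underlying scaled curve, and collapse unstable components by the double $\Proj$ construction of \eqref{Cst}, as in Corollary \ref{gaugedforget} and Lemma \ref{collapse1}. This lands in the stable scaled curve stack $\ovl{\M}_{0,1}(C) \cong \P$, whose coordinate records whether the scaling on the principal component is finite ($\rho \to 0$) or infinite ($\rho = \infty$). The two fiber descriptions are then immediate from the stability definition. Over $\rho = 0$ the principal component has finite scaling and stability is the large-area-chamber condition, which is the defining condition of $\ovl{\M}_n^G(C,X,d)$. Over $\rho = \infty$ the domain splits as $C_0 \cup \bigcup_j C_{i_j}$ with $C_0 \to C \times (X \qu G)$ a stable map and the $C_{i_j}$ affine gauged maps matched along evaluation into $\ovl{I}_{X \qu G}$; matching this with the product \eqref{union} gives the second isomorphism, the gluing along the inertia stack being that of Proposition \ref{cutcollapse3}.

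For the homeomorphism I would follow Theorems \ref{coarse} and \ref{coarseaffine} on each type of component. Bijectivity of the map $Z$ to scaled vortices comes from Mundet's correspondence (Theorem \ref{corr}) applied to the finite-scaling principal component and to each infinite-area/affine component, the convexity of the Mundet functional $\cI(P)$ giving uniqueness of the minimizing complex gauge transformation. For continuity of $Z$: in an analytic family over $S$, fix a reduction to $K$; then $A_s$ converges uniformly in all derivatives and $u_s$ Gromov converges, so $F_{A_s} + u_s^* P(\Phi)\Vol_C$ converges in $L^p$ ($p > 2$) away from the bubbling set, and the implicit function theorem produces the unique complex gauge transformation to a vortex with $\xi_{A_s,u_s} \to \xi_{A,u}$ in $W^{1,p}$, hence in $C^0$. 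Continuity of the inverse uses that $\ovl{\M}_{n,1}^G(C,X)$ is a coarse moduli space for $C^0$ families via the Quot-scheme construction of Section \ref{converge}.

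The main obstacle is matching the two pictures across $\rho = \infty$, where the scaling degenerates: one must show that as the scaling on the principal component tends to infinity the vortex there converges to a holomorphic map into $X \qu G$ while the lost energy is captured by affine vortices bubbling off, and conversely that every such broken configuration arises as a limit of finite-scaling vortices. This is precisely the content of the compactness theorem for scaled vortices in \cite{qk1}, which I would invoke (as in the final paragraph of Theorem \ref{coarseaffine}) to identify the Gromov limit with the algebraic specialization, thereby gluing the continuity arguments over the two ends of $\P$ into a homeomorphism over all of $\ovl{\M}_{n,1}^G(C,X)$.
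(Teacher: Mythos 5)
Your overall strategy matches the paper's: the hom-stack machinery for the Artin property, the fibers of $\rho$ read off the stability definition, and the homeomorphism assembled from Theorems \ref{corr}, \ref{coarse} and \ref{coarseaffine} via continuity of the minimizing complex gauge transformation and the coarse-moduli property for the inverse. But there is one genuine gap, and it is exactly the step on which the paper spends most of its proof: openness of $\ovl{\M}_{n,1}^G(C,X)$ near the fiber $\rho = \infty$. You dismiss this with ``Mundet semistability together with stability of the section are open (as in the discussion preceding Theorem \ref{qproper})''. That discussion establishes openness of the semistable locus for a \emph{fixed} stability parameter; here the parameter varies over $\P$ and the stability condition changes form at $\rho=\infty$ (where it becomes: values in $X \qu G$ on the infinite-area components). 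What must actually be shown is that $\ovl{\MM}_{n,1}^G(C,X,d)_{<\infty}\setminus \ovl{\M}_{n,1}^G(C,X,d)_{<\infty}$ is closed in all of $\ovl{\MM}_{n,1}^G(C,X,d)$, i.e.\ that a family of $\rho_s$-unstable maps with $\rho_s\to\infty$ cannot specialize to a configuration satisfying the infinite-scaling stability condition. Nothing in your argument rules this out.

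The paper closes this with a quantitative analytic argument you would need to supply: representing the principal component as a pair $(A_s,v_s)$ and flowing to a reducible limit under the heat flow of \cite{venu:heat}, the local freeness of the $K$-action on the zero level set of the moment map gives a constant $c>0$ with $\Vert\Phi(x)\Vert > c$ wherever $\dim(K_x)>0$; combined with the energy--area identity this yields a lower bound of the form $\Vert \rho_s^{-1}F_{A_s} + \rho_s v_s^*P(\Phi)\Vert_{L^2} \ge c_0 + c_1|\rho_s|$, which is incompatible with convergence to a limit $v_0$ with $v_0^*P(\Phi)=0$ on the principal component, since $\Vert F_{A_s}\Vert_{L^2}$ stays bounded. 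Without some such estimate the claim that $\ovl{\M}_{n,1}^G(C,X)$ is an open substack --- and hence the identification of $\rho^{-1}(\infty)$ with \eqref{union} inside the semistable locus --- is unsupported. The rest of your proposal (construction of $\rho$, the fiber identifications, and the gluing of the two Hitchin--Kobayashi homeomorphisms across $\rho=\infty$ using the compactness theorem of \cite{qk1} and convergence of the gauge transformations on rescaled balls) is consistent with the paper's argument.
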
 

\begin{proof}  
That $\ovl{\MM}_{n,1}^G(C,X)$ is an Artin stack follows from Example
\ref{homstacks}, since the universal scaled curve is proper over
$\ovl{\MM}_{n,1}^G(C,X)$ and $\ovl{\MM}_{n,1}^G(C,X)$ is the hom-stack
of representable morphisms from the universal scaled curve to $X/G$.
To see that $\ovl{\M}_{n,1}^G(C,X,d)$ is an Artin substack we must
show that $\ovl{\MM}_{n,1}^G(C,X,d)_{< \infty} \backslash
\ovl{\M}_{n,1}^G(C,X,d)_{< \infty}$ is closed in
$\ovl{\MM}_{n,1}^G(C,X,d)$.  An argument which uses the
Hitchin-Kobayashi correspondences above goes as follows: Suppose that
$u: \hat{C} \to C \times X/G$ is a family of scaled maps over a
parameter space $S$ with central fiber $u_0$ an infinite-area gauged
map, that is, a map $C_0 \to X \qu G$ together with a collection of
affine gauged maps $C_i \to X/G$, $i =1,\ldots, k$, and for $s \neq
0$, the map $u_s$ $\rho_s$-unstable with $\rho_s \to \infty$ as $s \to
0$.  In particular the principal component of $u_s$ can be represented
as a pair $(A_s,v_s)$ with $(A_s,v_s)$ flowing under the heat flow in
Venugopalan \cite{venu:heat} to a limit $(A_s',v_s')$ that is
reducible.  Since $K$ acts locally freely on the zero level set
$\Phinv(0)$ there exists a constant $c > 0$ such that
$$\forall x \in X, \ \dim(K_x) > 0 \implies \Vert \Phi(x)
\Vert > c .$$ 
Using the energy-area identity there exist constants $c_0,c_1$ such
that
$$\Vert \rho_s^{-1} F_{A_s'} + \rho_s (v_s')^* P(\Phi) \Vert_{L^2}  \ge c_0 + c_1 | \rho_s |$$
which implies the same estimate for $(A_s,v_s)$.  Now suppose that
$(A_s,u_s)$ converges to some $(A_0,v_0)$ with $v_0^* P(\Phi) = 0 $ on
the principal component.  Since $\Vert F_{A_s} \Vert_{L^2}$ is
bounded, we must have $ \Vert v_s^* P(\Phi) \Vert_{L^2} \to \infty$.
This contradicts $ \Vert v_0^* P(\Phi) \Vert = 0$.  Hence $u_0$ is not
in the closure of the unstable locus in $\ovl{\MM}_{n,1}^G(C,X)_{<
  \infty}$.

If every polystable scaled gauged map is stable then
$\ovl{\M}_{n,1}^G(C,X)$ is Deligne-Mumford.  The homeomorphism of the
coarse moduli space to the moduli space of vortices is already
established for curves with finite scaling or curves with infinite
scaling via Mundet's correspondence and its version for affine curves
in Theorem \ref{coarseaffine}.  It remains to show that the
homeomorphisms on these subsets glue together to a homeomorphism on
the entire space, that is, that the bijection and its inverse are
continuous.  The first issue is the continuity of the complex gauge
transformation used in the definition of the correspondence under
specialization.  Let $ C \to S, u : C \to X/G, \lambda,\ul{z}: S \to
C^n$ be a family of stable scaled gauged maps and $s_0 \in S$.  After
restricting to a neighborhood of $s_0$ we may assume that the bundles
are obtained by applying the gluing construction to a principal
$K$-bundle on $C_{s_0}$ to family of connections on $C_{s_0}$.  For
each $s \in S$, there is a unique-up-to-unitary gauge transformation
$g_s \in \G(P)$ such that $g_s(A_s,u_s)$ is a vortex, obtained as the
minimum of the Mundet functional.  On any component, say $j$-th, with
finite scaling we have $F_{g_{s_0}(A_s,u_s)} \to
F_{g_{s_0}(A_{s_0},u_{s_0})_j}$ as $s \to s_0$ in Lebesgue space $L^p,
p > 2$.  By an argument using the implicit function theorem, $g_s$
converges to $g_{s_0}$ in $W^{1,p}$, hence in $C^0$ on the complement
of the bubbling set.  (Note that the $L^p$ convergence does not hold
at the bubbling points and indeed there is not $C^0$ convergence of
the complex gauge transformation on the principal component.)  A
similar discussion holds on any of the bubbles on which the limiting
scaling is finite: Namely suppose that $\phi_s: B_{r_s}(0) \to C$ is a
sequence of embeddings of balls of radius $r_s \to \infty$ such that
$\phi_s^* (A_s,u_s)$ converges to an affine vortex $(A,u)$.  Then
$\phi_s^* F_{A_s,u_s}$ converges to $F_{A,u}$ in $L^p$.  This implies
that the gauge transformations $g_s$ converge in $C^0$ on the compact
subsets of the affine line.  Continuity of the inverse map
$\ovl{M}_{n,1}^K(C,X) \to \ovl{M}_{n,1}^G(C,X)$ follows from the fact
that $\ovl{M}_{n,1}^G(C,X)$ is a coarse moduli space for $C^0$ families
of gauged maps, which is similar to the case $C = \bA$.
\end{proof} 

\section{Virtual fundamental classes} 

The virtual fundamental class theory of Behrend-Fantechi \cite{bf:in}
(which is a version of earlier approach of Li-Tian \cite{litian:vir})
constructs a Chow class from a perfect relative obstruction theory.
Stacks of representable morphisms to quotient stacks by reductive
groups have canonical relative obstruction theories, by the same
construction in \cite{bf:in} and a deformation result of Olsson
\cite{ol:def}.  In this section, we construct virtual fundamental
classes for stacks of gauged (resp. gauged affine, gauged scaled)
maps.

\subsection{Sheaves on stacks} 

Any Artin stack $\XX$ comes equipped with a canonical {\em structure
  sheaf} of rings $\mO_\XX$ in any of the standard Grothendieck
topologies on $\XX$.  A {\em sheaf} on an Artin stack $\XX$ will mean
a sheaf of $\mO_\XX$-modules over the lisse-\'etale site of $\XX$, see
Olsson \cite{ol:qcoh}, de Jong et al \cite{dejong:stacks}.  A sheaf
$E$ is {\em coherent} if for every object $U$ of the lisse-\'etale
site, the restriction $E|U$ admits presentations $(\O_\XX^n | U) \to
(E | U)$ of finite type, and furthermore any such map has kernel of
finite type.  The {\em derived category of bounded complexes of
  sheaves with coherent cohomology} $D^b\Coh(\XX)$ is the subcategory
of the derived category of complexes of coherent sheaves with coherent
bounded cohomology groups.  It is a triangulated category obtained by
inverting quasi-isomorphisms in the category of complexes of sheaves
with coherent cohomology.

\begin{example} (Examples of complexes of coherent sheaves on Artin stacks) 
\begin{enumerate}
\item (Equivariant sheaves) If $\XX = X/G$ is the quotient stack
  associated to a group action of a group $G$ on a scheme $G$, then
  the category of sheaves on $\XX$ is equivalent to the category of
  $G$-equivariant sheaves on $X$, by an argument involving simplicial
  spaces \cite[12.4.5]{la:ch}.
\item (Cotangent complex) Any morphism of Artin stacks $f:\XX \to \YY$
  defines a {\em cotangent complex} $L_{\XX/\YY} \in D^b\Coh(\XX)$
  satisfying the expected properties \cite{ol:qcoh}, for example, if
  $g : \YY \to \ZZ$ is another morphism of Artin stacks then there is
  a distinguished triangle in $D^b\Coh(\XX)$
$\ldots \to L_{\XX/\ZZ} \to L_{\XX/\YY} \to Lf^* L_{\YY/\ZZ}[1] \to
  \ldots .$
\end{enumerate}
\end{example} 

\subsection{Cycles on stacks} 

The notion of {\em rational Chow group} $A(\XX)$ of a
Deligne-Mumford stack $\XX$ is developed in Vistoli
\cite{vistoli:inttheory}, and further improved in Kresch
\cite{kresch:can}.
A {\em cycle} of dimension $k$ on $\XX$ is an element of the free
abelian group $Z_k(\XX)$ generated by all integral closed substacks of
dimension $k$ so that the group of cycles is 
$$ Z(\XX) = \bigoplus_k Z_k(\XX) .$$
A {\em cycle with rational coefficients} of dimension $k$ is an
element of the group $Z_k(\XX) \otimes \Q$.  The {\em group of
  rational equivalences} on cycles of dimension $k$ on $\XX$ is
$$W_k(\XX) = \bigoplus_{\YY} \C(\YY)^* $$ 
the sum of the spaces of non-zero rational functions on substacks
$\YY$ of $\XX$ of dimension $k+ 1$.  Set
$$W(\XX) = \oplus_k W_k(\XX), \quad W(\XX)_\Q = W(\XX)
\otimes \Q .$$  
If $X$ is a scheme, there is a homomorphism $ \partial_X : W(X) \to
Z(X) $ that takes a rational function on a subvariety of $X$ to the
cycle associated to its Weil divisor.  For a stack $\XX$, the functors
$Z,W$ define sheaves on the \'etale site of $\XX$, the maps
$\partial_X$ define a morphism of sheaves and hence a morphism of
spaces of global sections $\partial_\XX: W(\XX) \to Z(\XX)$.  The {\em
  Chow group} is the cokernel
$$A(\XX) := \coker (\partial_\XX : W(\XX) \to Z(\XX)) $$
and the {\em rational Chow group} is $A(\XX)_\Q = A(\XX) \otimes \Q$.

Let $f: \XX \to \YY$ be a morphism of Deligne-Mumford stacks.  If $f$
is flat, then there is a {\em flat pull-back} $f^* : Z(\YY) \to
Z(\XX)$.  If $f$ is proper, then there is a {\em proper push-forward}
$f_* : Z(\XX)_\Q \to Z(\YY)_\Q$ given for finite flat morphisms by
$f_*[\XX'] = \deg(\XX'/f(\XX')) [f(\XX')]$; note that for stacks the
degree is a rational number, see Vistoli \cite[Section
  2]{vistoli:inttheory}.  These maps pass to rational equivalences, so
that we obtain maps
$$f^* : A(\YY) \to A(\XX) \text{ \ $f$ \ flat}, \quad f_* : A(\XX)_\Q
\to A(\YY)_\Q \text{ \ $f$ \ proper} .$$
If $f: \XX \to \YY$ is a regular local embedding of codimension $d$
and $Z \to \YY$ is a morphism from a scheme $V$, then there is a Gysin
homomorphism 
$$f^!: Z(\YY) \to A(\XX \times_\YY V) \text{ \ $f$ \ regular local embedding } $$ 
defined by local intersection products.  Vistoli \cite[Theorem
  3.11]{vistoli:inttheory} proves that this passes to rational
equivalence.  The Gysin homomorphisms satisfy the usual functorial
properties with respect to proper and flat morphisms: For any fiber
diagram
$$ \begin{diagram} 
\node{\XX''} \arrow{s,l}{p} \arrow{e} \node{\YY''} \arrow{s,r}{q} \\
\node{\XX'} \arrow{s} \arrow{e} \node{\YY'}\arrow{s} \\
\node{\XX}  \arrow{e,t}{f} \node{\YY} \end{diagram} $$
where $\YY',\YY''$ are schemes and $f$ is a regular local embedding,
(i) if $q$ is proper then $ f^! q_* = p_* f^|$ and (ii) if $q$ is flat
then $f^! q^* = p^* f^!$.

If $f: X \to Y$ is a morphism of schemes then there is a {\em
  bivariant Chow group} $A^\dual(X \to Y)$, whose elements $\alpha$ of
degree $l$ associate to any morphism $U \to Y$ and each class $u \in
A_k(U)$ a class, denoted $\alpha \cap u$, in $A_{k-l}(X \times_Y U)$
satisfying compatibility with flat pull-back, proper push-forward, and
Gysin homomorphisms for regular local embeddings.  The definition of
bivariant Chow groups extends to {\em representable} morphisms of
stacks $f: \XX \to \YY$ \cite[Section 5]{vistoli:inttheory}, and the
action on Chow groups of schemes extends to an action on Chow groups
of stacks equipped with morphisms to $\YY$.

The theory of Gromov-Witten invariants requires bivariant Chow theory
for representable morphisms of Artin stacks. As explained by
Behrend-Fantechi \cite[Section 7]{bf:in} 
\begin{proposition} \label{shriek}
\begin{enumerate} 
\item If $\XX \to \YY$ is a representable morphism of Artin stacks, then
there exists a bivariant Chow group $A^\dual(\XX \to \YY)$.  
\item 
If $\XX \to \YY$ is a regular local immersion then there exists a
canonical element $[f] \in A^\dual(\XX \to \YY)$ whose action on
Chow cycles is denoted $f^!$. 
\item If $\XX \to \YY$ is flat then there is a canonical {\em orientation
  class} $[f] \in A^\dual(\XX \to \YY)$.
\end{enumerate} 
\end{proposition}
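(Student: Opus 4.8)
The plan is to follow Behrend--Fantechi \cite[Section 7]{bf:in}, reducing everything to Fulton's bivariant intersection theory for schemes and to Vistoli's extension to representable morphisms of Deligne--Mumford stacks \cite[Section 5]{vistoli:inttheory}, with Kresch's Chow groups \cite{kresch:can} supplying the needed theory on Artin stacks. The representability hypothesis on $f: \XX \to \YY$ is used precisely to ensure that, for any test object $U \to \YY$, the fibre product $\XX \times_\YY U$ is again an algebraic space (resp. Deligne--Mumford stack), so that the Chow groups developed above apply to the \emph{targets} of the bivariant operations. For part (1) I would then define $A^\dual(\XX \to \YY)$ operationally: a class $\alpha$ of degree $l$ is a collection of homomorphisms $\alpha_U: A_k(U) \to A_{k-l}(\XX \times_\YY U)$, one for each morphism $U \to \YY$ from a Deligne--Mumford stack (it suffices to test against schemes $U \to \YY$, via \'etale atlases), required to be compatible, in every fibre square over $\YY$, with the three basic operations recalled above: proper push-forward, flat pull-back, and Gysin pull-back along regular local embeddings. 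The set of such compatible collections forms a graded abelian group under addition of operations, and this is the asserted $A^\dual(\XX \to \YY)$; no finiteness statement is needed since the group is defined purely by its action.

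For part (2), given a regular local embedding $f$ of codimension $d$, I would produce the canonical degree-$d$ class $[f]$ by deformation to the normal cone. On a smooth atlas this is Fulton's construction: for each $U \to \YY$, specialization to the normal bundle $N_{\XX/\YY}$ defines $f^!: A_k(U) \to A_{k-d}(\XX \times_\YY U)$, agreeing with Vistoli's Gysin homomorphism \cite[Theorem 3.11]{vistoli:inttheory} already invoked in the excerpt. The content is to check that these maps assemble into a single bivariant class, i.e. that they commute with the three basic operations; but this is exactly the functoriality of the Gysin maps recorded earlier (the identities $f^! q_* = p_* f^!$ and $f^! q^* = p^* f^!$ in fibre diagrams), now packaged as the element $[f]$. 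Since the normal-cone construction commutes with smooth, in particular \'etale, base change, every verification reduces to the scheme case after passing to atlases.

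For part (3), when $f$ is flat the orientation class $[f]$ is simply flat pull-back: $[f]_U = (\pr_2)^{*}$, where $\pr_2: \XX \times_\YY U \to U$ is the flat base change of $f$. That flat pull-back defines a bivariant class is the standard compatibility of flat pull-back with itself, with proper push-forward (flat base change) and with Gysin maps (clause (ii) of the functoriality above); each of these again descends from the scheme case through the atlases, so $[f] \in A^\dual(\XX \to \YY)$ is well defined.

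The main obstacle I expect is not the scheme-level intersection theory, which is classical, but the bookkeeping needed to make deformation to the normal cone and flat pull-back well defined and mutually compatible for the \emph{non-finite-type, non-separated} Artin stacks that occur here. Concretely, one must know that Kresch's Chow groups and the specialization map behave well under the smooth atlases used to present these stacks, and that the positive-dimensional automorphism groups (the $BG$-directions coming from the gauge group) do not obstruct the definition of the operations on the Deligne--Mumford test objects $U \to \YY$. Because a bivariant class is specified by its action on Chow groups of such Deligne--Mumford test stacks, where Vistoli's theory applies verbatim, all of this reduces to compatibility statements checkable on atlases --- which is the technical heart, and is precisely what Behrend--Fantechi \cite[Section 7]{bf:in} carry out.
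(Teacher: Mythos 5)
Your proposal is correct and follows essentially the same route as the paper, which offers no argument beyond attributing the result to Behrend--Fantechi \cite[Section 7]{bf:in} together with the Vistoli \cite[Section 5]{vistoli:inttheory} and Kresch machinery already recalled in the surrounding text. Your operational definition of the bivariant group, the normal-cone construction of $[f]$ for regular local immersions, and flat pull-back as the orientation class are exactly the ingredients those references supply, so the sketch is a faithful expansion of the citation rather than a different proof.
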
  

\label{equivsheaves}

Now we define derived categories and Chow groups for $G$-stacks.  Let
$\XX$ be a $G$-stack with multiplication $\mu: G \times \XX \to \XX$
and projection on the right factor $\rho: G \times \XX \to \XX$.  and
${F} \to \XX$ a sheaf.  A {\em $G$-linearization} of ${F}$ is an
isomorphism of sheaves $\phi: \mu^* {F} \to \rho^* {F}$ which is
compatible with multiplication in the sense that $(\mu \times
\Id_{\XX})^* \phi$ is equal to $(\Id_G \times \mu)^* \phi$.  A {\em
  $G$-sheaf} on $\XX$ is a sheaf together with a linearization.  Any
$G$-sheaf ${F}$ descends to a sheaf ${F}/G$ on the quotient stack $\XX/G$,
so that the cohomology of ${F}/G$ is the invariant part of the
cohomology of ${F}$.

 The {\em equivariant derived category} $D^b\Coh^G(\XX)$ is the
 derived category of the quotient stack $D^b\Coh(\XX/G)$.  In
 particular, any complex of $G$-sheaves defines an object in $D^b
 \Coh^G(\XX)$.  Note that if $\XX/G$ is Deligne-Mumford, then $D^b
 \Coh^G(\XX)$ is the usual derived category of bounded complexes of
 coherent sheaves, otherwise one needs more complicated constructions
 involving Cartesian sheaves \cite{ol:qcoh}.  The {\em equivariant
   cotangent complex} is the cone $L_\XX^G := \Cone(L_\XX \to
 \g^\dual)$ on the morphism $L_\XX \to \g^\dual$ induced by
 the action of $G$.  By the exact triangle for cotangent complexes, if
 the action of $G$ on $\XX$ is locally free, so that $\XX/G$ is again
 a Deligne-Mumford stack then $L_\XX^{G}$ descends to
 $L_{\XX/G}$.

Suppose that $G$ is a reductive group, and $\XX$ is a proper
Deligne-Mumford stack $\XX$ of dimension $n$ equipped with an action
of $G$.  The {\em equivariant Chow groups} $A^G(\XX)$ are defined by
Edidin-Graham (for schemes) \cite{eg:equiv} and Graber-Pandharipande
(for stacks) \cite{gr:loc} as follows.  Let $V$ be an $l$-dimensional
representation of $G$ such that $V$ has an open subset $U$ on which
$G$ acts freely and whose complement has codimension more than $n-i$.
Let
$$ A_i^G(\XX) = A_{i+l - g}(  U \times_G \XX) $$
be the {\em $i$-th equivariant Chow group}.  By \cite[Proposition
  1]{eg:equiv} (for schemes; the argument for Deligne-Mumford stacks is
the same) $A_i^G(\XX)$ is independent of the choice of $V$ and $U$.
It satisfies the following properties
\begin{enumerate} 
\item (Functoriality) 
If $\XX, \YY$ are Deligne-Mumford stacks
  equipped with actions of $G$ then any $G$-equivariant proper
  resp. flat morphism $f: \XX \to \YY$ induces a map $f_* : A^G(\XX)
  \to A^G(\YY)$ resp. $f^* : A^G(\YY) \to A^G(\XX)$.
\item (Free actions) If the action of $G$ on $\XX$ is locally free
  then $A^G(\XX) \to A(\XX/G)$ is an isomorphism, where $\XX/G$ is the
  quotient stack.  (This is \cite[Lemma 6]{gr:loc} in the case $G =
  \C^\times$).
\end{enumerate} 

More generally, Kresch has introduced a notion of Chow groups for
Artin stacks \cite{kr:cy}, so that $A(\XX/G)$ is isomorphic to
$A^G(\XX)$, for a not-necessarily-free action of a reductive group
$G$ on a Deligne-Mumford stack $\XX$.

\subsection{Obstruction theories}

Often a stack $\XX$ is given (at least locally) as a zero locus of a
vector bundle $\eE \to \YY$.  In such a case, $\XX$ is a complete
intersection and so carries a fundamental class.  The notion of {\em
  perfect obstruction theory} for $\XX$ keeps some of this information
and is enough to reconstruct a virtual fundamental class for $\XX$.

\begin{definition} \label{obsth}
An {\em obstruction theory} for an Deligne-Mumford stack $\XX$ is a
pair $(E, \phi)$ where $E \in \Ob(D^b \Coh (\XX))$ is an object in the
derived category of coherent sheaves in the \'etale topology and
$\phi:E \to L_{\XX}$ is a morphism in the derived category of coherent
sheaves such that
\begin{enumerate}
\item $h^i(E) = 0, i > 0$;
\item $h^0(\phi)$ is an isomorphism;
\item $h^{-1}(\phi)$ is surjective.
\end{enumerate} 
The rank of $E$ is the {\em virtual dimension} of $\XX$.  A {\em
  relative obstruction theory} for a morphism of stacks $f: \XX \to
\YY$ is defined similarly, but replacing the cotangent complex $L_\XX$
with its relative version $L_f$.  An obstruction theory $(E,\phi)$ is
{\em perfect} if $E$ has amplitude in $[-1,0]$, that is, non-vanishing
cohomology only in degrees $0,-1$.  Behrend-Fantechi \cite{bf:in} furthermore
assume that there is a {\em global resolution} of $E$, that is, a
complex of vector bundles $F = [F^{-1} \to F^0]$ together with an
isomorphism of $F$ to $E$ in $D^b \Coh(\XX)$, but this assumption is
removed in Kresch \cite{kr:cy}.
\end{definition} 

There is an equivariant version of obstruction theory in the sense of
Behrend-Fantechi \cite{bf:in}, given as follows.  Let $\XX$ be a
proper Deligne-Mumford $G$-stack where $G$ is a reductive group, and
let $U$ be a free $G$-variety as in the definition of the equivariant
Chow ring above.  Let $L_\pi \in \Ob(D^b\Coh(\XX \times_G U))$ be the
relative cotangent complex for $\pi: \XX \times_G U \to U/G$.  An {\em
  equivariant obstruction theory} is a pair $(E,\phi)$ where $E \in
\Ob( D^b\Coh(\XX \times_G U))$ and $\phi$ is a morphism in $
D^b\Coh(\XX \times_G U )$ to $L_\pi$.  We suppose that $E$ admits a
global presentation $E^{-1} \to E^0$.

\begin{example}(Examples of Obstruction Theories) 
\label{perfobsex}
\begin{enumerate} 
%
%
\item {\rm (Stacks of morphisms to projective schemes \cite{bf:in})} Let $C,X$ be
  projective schemes such that $C$ is Gorenstein, and $\Hom(C,X)$ the
  scheme of morphisms from $C$ to $X$.  Let $u: C \times X \to X$ be
  the universal morphism and $p: C \times X \to C$ the projection.
  Let 
$$E = Rp_* (u^* L_X \otimes \omega) = (Rp_* u^*
  T_X)^\dual .$$  
Then $E$ forms part of an obstruction theory for $X$, perfect if $X$
is smooth and $C$ is a curve \cite[6.3]{bf:in}.  Indeed, by the
functorial properties of the cotangent complex there is a homomorphism
$$ e: u^* L_X \to L_{C \times \Hom(C,X)} \to L_{C \times \Hom(C,X)/C} 
\cong  \pi^* L_{\Hom(C,X)} .$$
Then $e$ induces a homomorphism $ \phi: = \pi_* (e^\dual)^\dual :
E^\dual \to L_X^\dual .$ The map $\phi$ is an obstruction theory
\cite[6.3]{bf:in}.
%
\item {\rm (Stacks of morphisms to Artin stacks)} The construction of an
  obstruction theory extends to the case that $X$ is an Artin
  $S$-stack and $\Hom_S(C,X)$ is replaced by a Deligne-Mumford
  substack of the stack of {\em representable} morphisms
  $\Hom_S^{\rep}(C,X)$, as long as one can show that
  $\Hom_S^{\rep}(C,X)$ is also an Artin $S$-stack and $E$ has
  amplitude in $[-1,0]$; see \cite[Theorem 1.1]{ol:def} for the
  extension of basic results about deformation theory of morphisms of
  schemes to the setting of stacks.  That the Hom-stack $\Hom_S(C,X)$
  is an Artin stack, if $C,X$ are, is not known in general, but holds
  as long as $X = Y/G$ is a quotient stack for action of a reductive
  group $G$ on a projective variety $Y$ by Example \ref{homstacks}
  \eqref{homstoquotients}.  In this case, if $X$ is smooth than $E$
  has amplitude in $-1,0$; cohomology below degree $-1$ vanishes since
  $T(Y/G)$ has amplitude in $0,1$ while vanishing in degree $1$
  follows from the assumption that the substack is Deligne-Mumford and
  $H^1(E) = \Ext^{-1}(E,\C)$ is the sheaf of infinitesimal
  automorphisms \cite[Theorem 1.5]{ol:def}.
\item {\rm (Moduli stacks of bundles)} Let $C$ be a projective scheme
  and $G$ a reductive group so that $\Hom(C, BG)$ is the moduli stack
  of principal $G$-bundles on $C$.  By Examples \ref{stacksex}
  \eqref{bundlesstack1} and \ref{homstacks} \eqref{bundlesstack2}
  $\Hom(C,BG)$ has an obstruction theory with $E = (Rp_* \ul{\g}
        [1])^\dual $ where $\ul{\g}$ denotes the trivial sheaf with
        fiber $\g$.  If $C$ is a projective curve then this
        obstruction theory is perfect on the substack of irreducible
        bundles.  In fact $\Hom(C,BG)$ is a smooth Artin stack and the
        obstruction theory coincides with the cotangent complex
        \cite[3.6.8]{sorg:lec}.
\item {\rm (Hom-stacks over stacks)} Continuing \ref{homstacks}
  \eqref{homstoquotients} let $\XX$ be a Gorenstein Deligne-Mumford
  curve over an Artin stack $\ZZ$, $\YY$ an Artin stack over $\ZZ$ and
  suppose that $\Hom^{\rep}_\ZZ(\XX,\YY)$ is an Artin stack, and
  $\Hom^{\rep,0}_\ZZ(\XX,\YY)$ the sub-stack of
  $\Hom_\ZZ^{\rep}(\XX,\YY)$ with finite automorphism group.  The
  restriction of the relative obstruction theory to
  $\Hom^{\rep,0}_\ZZ(\XX,\YY)$ is perfect.
%
\item {\rm (Moduli stacks of gauged maps)}
\label{gaugedmapsobtheory} In particular, for any type $\Gamma$ and non-negative integer $n$, the moduli stack 
  $\ovl{\M}^G_{n,\Gamma}(C,X)$ has a relative obstruction theory over
$\ovl{\MM}_{n,\Gamma}(C)$ with complex given by $( Rp_* u^* T(X/G)
)^\dual .$
%
\item \label{gaugedmapsobtheory2} {\rm (Moduli stacks of affine gauged
  maps)} For any type $\Gamma$ and non-negative integer $n$, the
  moduli stack $\ovl{\M}^G_{n,\Gamma}(\bA,X)$ is an open substack of
  $\Hom^{\rep}_{\ovl{\MM}_{n,\Gamma}(\bA)}(\ovl{\CC}_{n,\Gamma}(\bA),X/G)
  $ and has a relative obstruction theory over
  $\ovl{\MM}_{n,\Gamma}(\bA)$ with complex given by $( Rp_* u^* T(X/G)
  )^\dual .$
\item {\rm (Moduli stacks of stable maps, equivariant case)} If a
  group $G$ acts on a smooth projective variety $X$, then for any type
  $\Gamma$ and non-negative integers $g,n$ the moduli stack of stable
  maps $\ovl{\M}_{g,n,\Gamma}(X)$ admits an equivariant perfect
  relative obstruction theory over $\ovl{\MM}_{g,n,\Gamma}$ as in
  Graber-Pandharipande \cite{gr:loc} and so an equivariant virtual
  fundamental class.
\end{enumerate}
\end{example}


\subsection{Definition of the virtual fundamental class}

 Behrend-Fantechi \cite{bf:in} and Kresch \cite{kresch:can} construct
 for any Deligne-Mumford stack $\XX$ an {\em intrinsic normal cone} of
 pure dimension zero $C_\XX$, defined by patching together the
 quotients $ C_{U/M} / f^* T_M$ for local embeddings $f: U \to M$.
 (See \cite[Theorem 1]{kresch:can} for a correction to the argument in
 \cite{bf:in}.)  If $(E,\phi)$ is a perfect obstruction theory with $E
 = (E^{-1} \to E^0)$ then the morphism $\phi$ induces a morphism of
 cone stacks $ C_\XX \to E^{\dual,1}/E^{\dual,0} $.  Let $C_E$ denote
 the fiber product of $E^{\dual,1}$ and $C_\XX$ over
 $E^{\dual,1}/E^{\dual,0}$.  In the rest of the paper, we denote by
 $A(\XX)$ etc. the rational Chow group of $\XX$.

\begin{definition}  {\rm (Virtual fundamental classes)} 
\begin{enumerate} 
\item {\rm (Non-equivariant case)} The {\em virtual fundamental class}
  $[\XX]$ (depending on $(E,\phi)$) is the intersection of $C_E$ with
  the zero section of $E^{\dual,1}$ in $A(\XX)$. By \cite[5.3]{bf:in},
  $[\XX]$ is independent of the choice of global resolution used to
  construct it.  
\item {\rm (Equivariant virtual fundamental classes)} In the
  equivariant case, the morphism $\pi: U \times_G \XX \to U/G$ is of
  Deligne-Mumford type and gives an intrinsic normal cone $[C_\XX] \in
  A^G_0(\XX) = A_0(\XX \times_G U)$.  One obtains a virtual
  fundamental class in $A^G(\XX)$.
\item {\rm (Relative virtual fundamental classes)} Let $f: \XX \to
  \YY$ be a representable morphism of algebraic stacks, and
  $A^\dual(\XX \to \YY)$ the bivariant Chow group constructed by
  Vistoli \cite{vistoli:inttheory}.  If $f$ is flat or a regular
  immersion, one denotes by $[f] \in A^\dual(\XX \to \YY)$ the
  orientation class of \ref{shriek}, and by $f^!$ its action on Chow
  groups of Deligne-Mumford stacks.  Given a relative perfect
  obstruction theory let $[\XX] \in A_{\dim(\YY) + \rk(E)}(\XX)$ be
  the {\em relative virtual fundamental class} given by intersecting
  $C_E$ with the zero section of $E^{\dual,1}$ \cite{bf:in},
  \cite{kresch:can}.
\end{enumerate}
\end{definition} 

\begin{example} \label{stableex}
\begin{enumerate}
\item (Stable Maps) Let $X$ be a smooth projective variety and $g,n$
  non-negative integers.  For any type $\Gamma$, the moduli stack
  $\ovl{\M}_{g,n,\Gamma}(X,d)$ with genus $g$ and $n$ markings is a
  proper Deligne-Mumford stack equipped with a perfect relative
  obstruction theory over $\ovl{\MM}_{g,n,\Gamma}$ and so has a virtual
  fundamental class $[\ovl{\M}_{g,n,\Gamma}(X,d)]$.  More generally if
  $X$ is proper smooth $G$-stack with $G$ a reductive group then the
  obstruction theory is equivariant and $\ovl{\M}_{g,n}(X,d)$ has a
  $G$-equivariant virtual fundamental class
  $[\ovl{\M}_{g,n,\Gamma}(X,d)] \in A_G(\ovl{\M}_{g,n,\Gamma}(X,d))$.
  Even more generally let $\XX$ be a smooth proper Deligne-Mumford
  stack and $\Gamma$ a combinatorial type.  The moduli space of
  twisted stable maps $\ovl{\M}_{g,n,\Gamma}(\XX)$ discussed in Section
  4 has a canonical perfect relative obstruction theory and hence a
  virtual fundamental class $[\ovl{\M}_{g,n,\Gamma}(\XX,d)]$.
\item (Stable gauged maps) For a type $\Gamma$ and non-negative
  integer $n$ if $\ovl{\M}^G_{n,\Gamma}(C,X)$ is a Deligne-Mumford
  substack (equivalently in characteristic zero, all automorphism
  groups are finite) then it has a virtual fundamental class
  $[\ovl{\M}^G_{n,\Gamma}(C,X)] \in A(\ovl{\M}^G_{n,\Gamma}(C,X)) $, by
  Example \ref{perfobsex} \eqref{gaugedmapsobtheory}.
\item Recall that $\ovl{\M}_{n,1}^G(\bA,X)$ admits a forgetful morphism
  to $\ovl{\MM}_{n,1}^{\tw}(\bA)$ (where the superscript $tw$ indicates
  that we allow orbifold structures at the nodes with infinite
  scaling, in the case that $X \qu G$ is only locally free) and to
  $\ovl{\M}_{n,1}(\bA)$, the latter collapsing components that become
  unstable after forgetting the morphism to $X/G$.
  $\ovl{\M}_{n,1}^G(\bA,X)$ has a canonical perfect relative
  obstruction theory over $\ovl{\MM}_{n,1}^{\tw}(\bA)$, whose complex is
  dual to the push-forward of $u^* T(X/G)$ over the universal curve
  over $\ovl{\M}_{n,1}^G(\bA,X)$, by Example \ref{perfobsex} and so a
  virtual fundamental class $[\ovl{\M}_{n,1}^G(\bA,X)]$.
\end{enumerate} 
\end{example}

\def\cprime{$'$} \def\cprime{$'$} \def\cprime{$'$} \def\cprime{$'$}
  \def\cprime{$'$} \def\cprime{$'$}
  \def\polhk#1{\setbox0=\hbox{#1}{\ooalign{\hidewidth
  \lower1.5ex\hbox{`}\hidewidth\crcr\unhbox0}}} \def\cprime{$'$}
  \def\cprime{$'$} \def\cprime{$'$} \def\cprime{$'$}

\end{document}